\documentclass[10pt,twoside,a4paper,english,reqno]{amsart}

\usepackage[dvips]{epsfig}
\usepackage{amscd}
\usepackage{caption}
\usepackage{a4wide}
\usepackage{amssymb,amsthm,amsmath,amsfonts,mathrsfs}
\usepackage{bm,latexsym,enumitem,dsfont,tabularx,graphicx,url}
\usepackage[utf8]{inputenc}
\usepackage{hyperref}
\usepackage{stmaryrd}
\usepackage{subcaption} 

\usepackage{algorithm}

\usepackage{algpseudocode}
\hypersetup{
    colorlinks=true,
    linkcolor=blue,
    filecolor=red,
    urlcolor=blue,
    citecolor=red,
}
\usepackage{nicefrac,microtype,upref,ulem,doi,footmisc}

\usepackage[svgnames]{xcolor}
\usepackage{tikz}
\usepackage{orcidlink}

\theoremstyle{plain}
\newtheorem{theorem}{Theorem}[section]

\newtheorem{lemma}[theorem]{Lemma}

\newtheorem{remark}[theorem]{Remark}
\newtheorem{assumption}[theorem]{Assumption}
\newtheorem{corollary}[theorem]{Corollary}
\numberwithin{equation}{section}

\algrenewcommand\algorithmicrequire{\textbf{Require:}}
\algrenewcommand\algorithmicensure{\textbf{Ensure:}}
\algrenewcommand\algorithmicfor{\textbf{For}}
\algrenewcommand\algorithmicdo{\textbf{do}}
\algrenewcommand\algorithmicif{\textbf{If}}
\algrenewcommand\algorithmicthen{\textbf{then}}
\algrenewcommand\algorithmicend{\textbf{end}}

\title{A Galerkin Finite Element Method for the Fractional Calder\'on Problem}

\author[M. Dwivedi]{Mukul Dwivedi\,\orcidlink{0000-0002-5891-2788}}
\address{Department of Mathematics, Saarland University, Saarbr\"ucken, Germany} 
\email{mukul.dwivedi@uni-saarland.de} 

\author[J. Railo]{Jesse Railo\,\orcidlink{0000-0001-9226-4190}}
\address{Computational Engineering, School of Engineering Sciences, Lappeenranta-Lahti University of Technology, Finland} 
\email{jesse.railo@lut.fi} 

\author[A. Rupp]{Andreas Rupp\,\orcidlink{0000-0001-5527-7187}}
\address{Department of Mathematics, Saarland University, Saarbr\"ucken, Germany} 
\email{andreas.rupp@uni-saarland.de} 

\subjclass[2020]{35R11, 65N30, 65N21, 35R30.}
\keywords{Fractional Calder\'on problem, Fractional Laplacian,  Galerkin method, Tikhonov regularization, Unique continuation,
Single measurement}

\newcommand{\R}{\mathbb{R}}

\newcommand{\dist}{\operatorname{dist}}

\begin{document}

\begin{abstract}
We study a numerical reconstruction strategy for the potential in the fractional Calder\'on problem from a single partial exterior measurement. The forward model is the fractional Schr\"odinger equation in a bounded domain, with prescribed exterior Dirichlet datum and corresponding measurement of the exterior flux
in an open observation set. Motivated by single-measurement uniqueness results based on unique continuation \cite{ghosh2020uniqueness},
we propose a decomposition strategy and a Galerkin--Tikhonov method to recover the potential by a stabilized least-squares quotient in a dedicated coefficient space.
We prove the existence and uniqueness of the discrete reconstructor and establish conditional convergence under natural consistency and parameter choice assumptions. We further derive {\it a priori} error estimates for the reconstructed state and for the coefficient reconstruction, and combine the latter with logarithmic stability for the continuous inverse problem to obtain a total coefficient error bound.
The framework cleanly separates the forward solver from the inverse reconstruction step and is compatible with practical truncation and quadrature schemes for the integral fractional Laplacian. Numerical experiments in one and two space dimensions illustrate stability with respect to noise and demonstrate reconstructions of both smooth and discontinuous potentials.
\end{abstract}

\maketitle

\section{Introduction}\label{sec:intro}
The classical Calder\'on problem asks whether one can determine the interior electrical conductivity of a medium from voltage and current measurements made only on the boundary.
In its conductivity form, one seeks to recover $\gamma$ in
\[
\nabla\!\cdot(\gamma\nabla u)=0, \qquad \text{in some domain }\Omega\subset \mathbb{R}^d,
\]
from the Dirichlet-to-Neumann (DN) map that sends boundary voltages to boundary currents \cite{calderon2006}.
After Calder\'on’s seminal observation, global uniqueness was established in many settings, including the foundational result of Sylvester--Uhlmann in $d\ge 3$ \cite{sylvester1987}, the two-dimensional theory of Astala--P\"aiv\"arinta \cite{astala2006}, and partial data results such as \cite{kenig2007}; see also \cite{uhlmann2014} for a broader perspective. From the computational standpoint, Calder\'on--type inverse problems are a canonical example of instability: even when uniqueness holds, the reconstruction map is highly ill--posed and regularization is essential \cite{engl1996regularization,kaipio2005statistical}.

In many applications, classical diffusion models are replaced by nonlocal operators that capture long-range interactions, anomalous transport, or jump processes.
A central example is the fractional Laplacian $(-\Delta)^s$ with $s\in(0,1)$, which admits several equivalent formulations (Fourier multipliers, singular integrals, extensions) \cite{kwasnicki2017equivalent} and appears as the generator of symmetric stable L\'evy processes \cite{applebaum2009levy}.
Fractional and nonlocal PDEs are used in models of anomalous diffusion and transport \cite{metzler2000random}, in materials with nonlocal interactions, and in related areas; see, for instance, \cite{bucur2016nonlocal,lischke2020what,rosoton2016survey}.
In this setting, the natural boundary interaction is typically replaced by an exterior interaction, which leads to inverse problems driven by exterior data.

In this work, we consider the fractional Schr\"odinger equation
\begin{equation}\label{eq:fwd}
\begin{cases}
(-\Delta)^s u + qu = 0 &\text{in }\Omega,\\
u=f &\text{in }\Omega_e:=\mathbb{R}^d\setminus\overline{\Omega},
\end{cases}
\end{equation}
where $\Omega\subset\mathbb{R}^d$ is a bounded Lipschitz domain, $s\in(0,1)$ is fixed, $q$ is an unknown potential supported in $\Omega$,
the solution $u\in H^s(\mathbb{R}^d)$ with $u-f\in \widetilde H^s(\Omega)$, and $f$ is an exterior Dirichlet datum prescribed in $\Omega_e$. The corresponding DN map $\Lambda_q$ assigns to $f$ the exterior nonlocal flux of the solution and can be defined rigorously via the natural bilinear form associated with $(-\Delta)^s$; see \cite{ghosh2020calderon,salo2017fractional} and equation \eqref{eq:DN_pointwise_def}.
We keep the analysis in general dimension $d\ge 1$, while the numerical experiments in this paper focus on $d\le 2$. 

A nonlocal analogue of the Calder\'on problem is to determine $q$ from (partial) knowledge of $\Lambda_q$.
This was introduced for the fractional Schr\"odinger equation in \cite{ghosh2020calderon}, where global uniqueness was proved even when exterior data are supported in one nonempty open set and observations are made on another (possibly disjoint) open set.
A key feature of the nonlocal setting is that $(-\Delta)^s$ couples $\Omega$ directly to $\Omega_e$, which leads to strong unique continuation and Runge approximation properties \cite{ghosh2020calderon,ruland2015unique}.
These properties, in particular, prove that $q$ can be recovered from a single non-zero exterior datum together with exterior observations on an arbitrary open set \cite{ghosh2020uniqueness}.
The fractional Calder\'on theory has rapidly expanded to broader operators and geometries, including variable coefficient nonlocal operators \cite{ghosh2017calderon}, lower order nonlocal perturbations \cite{bhattacharyya2021inverse}, higher order local perturbations \cite{covi2022higher}, time-dependent models \cite{kow2022wave}, and unbounded domains via suitable Poincar\'e inequalities \cite{railo2023poincare}.

Despite its uniqueness, the problem remains severely ill-posed.
Logarithmic stability is essentially optimal, and exponential instability phenomena can occur \cite{ruland2018exponential,ruland2020low}, while refined single-measurement stability estimates are studied in \cite{Ruland2021stability}.
Consequently, robust reconstruction from noisy data requires regularization and careful discretization that respects the nonlocal forward operator.

On the numerical side, two main difficulties interact: the nonlocality of the fractional Laplacian typically produces a dense system matrix, meaning that most degrees of freedom interact with many others, so assembly and linear algebra are substantially
more expensive than in local problems, while the inverse step amplifies noise and must be regularized.
For the fractional forward model, many discretization strategies exist, including finite difference--quadrature schemes \cite{duo2018,huang2014numerical},
finite element methods for the integral fractional Laplacian \cite{acosta2017fractional,ainsworth2017adaptive,bonito2018numerical},
and approaches based on extensions or related operator-theoretic formulations \cite{caffarelli2007extension,hofreither2019unified,nochetto2015pde}. For inverse problems, one may use optimization methods, regularization methods, or monotonicity-based techniques for suitable classes of potentials \cite{harrach2019monotonicity,harrach2020monotonicity}.
In the single-measurement setting, Li \cite{li2025numerical} proposed a finite difference reconstruction method based on Tikhonov regularization and treats the simultaneous recovery of a potential and an internal source \cite{li2024simultaneous}. Very recently, a Bayesian formulation for the single-measurement fractional Calder\'on problem has been developed in \cite{kow2025bayesian}, further emphasizing the need for
more direct methods.

The purpose of the present work is to provide a finite element realization of the unique continuation based single-measurement reconstruction principle of \cite{ghosh2020uniqueness}.
Our method separates the computational pipeline into a forward solve for \eqref{eq:fwd} and a subsequent regularized state-recovery step that inverts an exterior restriction operator.
This separation is computationally advantageous: the discrete observation operator depends only on the geometry, the observation set, and the fractional order, so it can be assembled once and reused across reconstructions.
Moreover, because the reconstructed state vanishes in the exterior domain, the fractional Laplacian of the reconstructed state on the observation set can be evaluated through a nonsingular integral over $\Omega$, which improves quadrature robustness.
After recovering the state, we reconstruct $q$ through a stabilized least-squares quotient that
avoids pointwise division by small values of $u$. On the analytical side, we prove convergence of the reconstructed state to the exact solution and derive an {\it a priori} error estimate for the state under natural consistency and parameter-coupling assumptions. We also quantify the coefficient reconstruction error at the discrete level and combine it with single-measurement logarithmic stability to obtain a total error estimate for the recovered potential. The numerical experiments, implemented in one and two space dimensions, illustrate stability with respect to noise, discretization, and regularization, and show that both smooth and discontinuous potentials can be recovered.
The contribution of the present paper is therefore numerical-analytical: we translate the existing single-measurement
theory into a reusable Galerkin framework and identify the main consistency, regularization, and coefficient-recovery issues that arise in its implementation.

The paper is organized as follows.
Section~\ref{sec:cont_model} introduces the continuous forward problem, the DN map, and the reduction of the single-measurement inverse problem to an exterior unique continuation inversion. Section~\ref{sec:recon} develops the discrete reconstruction method, and Section~\ref{sec:numerics} presents numerical experiments. We conclude with a discussion in Section~\ref{sec:conclusion}.

\section{Mathematical setting and single-measurement reconstruction}\label{sec:cont_model}

This section records the continuous model and the single-measurement reconstruction principle that motivates the discrete algorithm in Section~\ref{sec:recon}.
The forward variational framework for the fractional Schr\"odinger equation \eqref{eq:fwd} is available in
general space dimensions $d\ge 1$, and the same is true for the single-measurement
reconstruction principle used below. More precisely, the weak formulation and DN map are
treated in the $d$-dimensional setting in \cite{ghosh2020calderon}, while the single-measurement uniqueness and reconstruction result of \cite{ghosh2020uniqueness} is stated for all $d\ge 1$.

Let $s\in(0,1)$ and let $\Omega\subset\mathbb{R}^d$ be a bounded Lipschitz domain with exterior $\Omega_e$.
We write $H^s(\mathbb{R}^d)$ for the standard fractional Sobolev space.
For an open set $U\subset\mathbb{R}^d$, we use the restriction space equipped with the quotient norm \cite[Sec. 2]{ruland2020low}, i.e.,
\[
H^s(U):=\{u|_{U}:\ u\in H^s(\mathbb{R}^d)\}, \qquad \|u\|_{H^s(U)} := \inf \{ \|\mu\|_{H^s(\R^d)}\;:\; \mu|_{U} = u\}. 
\]
We also use the interior space
\[
\widetilde H^s(\Omega):=\overline{C_c^\infty(\Omega)}^{\,H^s(\mathbb{R}^d)},
\]
whose elements can be identified with $H^s(\mathbb{R}^d)$-functions supported in $\overline{\Omega}$.
For an open set $W\subset\Omega_e$, we define $\widetilde H^s(W)$ analogously and denote its dual by $H^{-s}(W):=(\widetilde H^s(W))^\ast$. For sufficiently regular $u:\mathbb{R}^d\to\mathbb{R}$, the integral fractional Laplacian is given by
\begin{equation}\label{eq:fraclap_def}
(-\Delta)^s u(x)
:= c_{d,s}\,\mathrm{P.V.}\!\int_{\mathbb{R}^d}\frac{u(x)-u(y)}{|x-y|^{d+2s}}\,dy,
\end{equation}
with a normalization constant $c_{d,s}>0$, see \cite{kwasnicki2017equivalent} and references therein.
Associated with \eqref{eq:fraclap_def} is the bilinear energy form
\begin{equation}\label{eq:bilinear_a}
a(u,v)
:= \frac{c_{d,s}}{2}\int_{\mathbb{R}^d}\int_{\mathbb{R}^d}
\frac{\big(u(x)-u(y)\big)\big(v(x)-v(y)\big)}{|x-y|^{d+2s}}\,dx\,dy,
\end{equation}
which is continuous on $H^s(\mathbb{R}^d)\times H^s(\mathbb{R}^d)$ and satisfies
$\langle (-\Delta)^s u,v\rangle = a(u,v)$ in the distributional sense for suitable $u,v$; see \cite{dinezza2012hitchhiker,kwasnicki2017equivalent,rosoton2016survey}.

Let $q\in L^\infty(\Omega)$ be real-valued.
Given an exterior datum $f\in H^s(\Omega_e)$, we consider the Dirichlet problem \eqref{eq:fwd}.
In weak form: find $u\in H^s(\mathbb{R}^d)$ such that $u=f$ in $\Omega_e$ and
\begin{equation}\label{eq:weak_forward}
a(u,\phi) + \int_{\Omega} q\,u\,\phi\,dx = 0
\qquad \forall \phi\in \widetilde H^s(\Omega).
\end{equation}
As in \cite{ghosh2020uniqueness}, well-posedness is ensured by the following assumption.

\begin{assumption}[No zero Dirichlet eigenvalue]\label{ass:nonresonance}
The only $w\in \widetilde H^s(\Omega)$ satisfying
$a(w,\phi)+\int_{\Omega} q\,w\,\phi\,dx=0$ for all $\phi\in \widetilde H^s(\Omega)$ is $w\equiv 0$.
\end{assumption}
If $q$ satisfies Assumption~\ref{ass:nonresonance}, then for each $f\in H^s(\R^d)$ (up to extension), there exists a unique solution $u\in H^s(\mathbb{R}^d)$ to \eqref{eq:weak_forward} such that $u-f \in \widetilde H^s(\Omega)$; see \cite{ghosh2020uniqueness}. Let $u$ be the solution of \eqref{eq:fwd} (equivalently, \eqref{eq:weak_forward}) with exterior datum $f$.
The fractional DN map is the operator $\Lambda_q$ that assigns to $f$ the exterior nonlocal flux,
\begin{equation}\label{eq:DN_pointwise_def}
\Lambda_q f := \big((-\Delta)^s u\big)\big|_{\Omega_e}\in H^{-s}(\Omega_e),
\end{equation}
interpreted in the distributional sense.
This agrees with the standard weak definition via the bilinear form $a(\cdot,\cdot)$; see \cite{ghosh2020calderon,salo2017fractional}.

Fix nonempty open sets $W_1,W_2\subset\Omega_e$.
In the partial data setting, one prescribes exterior inputs supported in $W_1$ and measures DN data on $W_2$.
In the single-measurement problem, we choose one exterior datum
\begin{equation}\label{eq:f-ass}
f\in C_c^\infty(W_1),
\end{equation}
while the prescribed data is
\begin{equation}\label{eq:meas}
g := \Lambda_q f\big|_{W_2}.
\end{equation}

To connect directly with the discretization, we impose the geometric separation that makes the measurement pointwise.
Throughout the reconstruction procedure, we assume that
\begin{equation}\label{eq:W-sep}
W_2\subset\subset\Omega_e,
\qquad
\dist(W_2,\Omega)=:d_0>0.
\end{equation}

\begin{remark}
For numerical purposes and simplicity, we set $W_1=W_2=:W$. This choice is made only to simplify notation. Nothing essential changes if \(W_1\neq W_2\). This only introduces additional symbols in the notation and definitions; the arguments extend without essential change to the case \(W_1\neq W_2\). 
\end{remark}

Since $f\in C_c^\infty(W)$, we may extend $f$ by $0$ to a globally smooth function on $\mathbb{R}^d$, which we denote by $\bar u_f$.
We use the standard decomposition
\begin{equation}\label{eq:u-decomp}
u = \bar u_f + u_0,
\qquad \iff \qquad 
u_0=u-\bar u_f\in \widetilde H^s(\Omega).
\end{equation}
Restricting \eqref{eq:DN_pointwise_def} to $W$ and using $u_0=0$ in $\Omega_e$, we obtain on $W$
\[
(-\Delta)^s u_0
= g - (-\Delta)^s \bar u_f,
\]
in the appropriate distributional sense.
This yields the preprocessed measurement
\begin{equation}\label{eq:mu-def}
\mu := g - \big((-\Delta)^s \bar u_f\big)\big|_{W}.
\end{equation}
In the noiseless continuous model, $\mu = ((-\Delta)^s u_0)|_{W}$.

The unique continuation step recovers $u_0\in \widetilde H^s(\Omega)$ from $\mu$, see \cite{ghosh2020uniqueness}. Once $u_0$ is reconstructed, one recovers $u=\bar u_f+u_0$ and then determines $q$ from the interior identity $(-\Delta)^s u + q u = 0$ in $\Omega$.
In computations, the coefficient recovery is carried out in a stabilized (regularized) manner; see Section~\ref{sec:recon}.

\begin{lemma}[Pointwise meaning of the preprocessed measurement]\label{lem:pointwise}
Assume \eqref{eq:W-sep} and $W$ is bounded.
If $v\in L^2(\Omega)$ is extended by $0$ outside $\Omega$, then for every $x\in W$ the quantity
\begin{equation}\label{eq:L-def}
(\mathcal Lv)(x):=(-\Delta)^s v(x) = -c_{d,s}\int_{\Omega}\frac{v(y)}{|x-y|^{d+2s}}\,dy
\end{equation}
is well-defined and $\mathcal Lv$ is smooth on $W$.
Moreover, one has the bound
\begin{equation}\label{eq:pointwise-bound}
\|\mathcal Lv\|_{L^\infty(W)}\le C(d_0,s,\Omega)\,\|v\|_{L^2(\Omega)},
\end{equation}
and for every integer $m\ge 0$ there exists $C_m>0$ such that
\begin{equation}\label{eq:L_Hm_bound}
\|\mathcal L v\|_{H^{m}(W)} \le C_m\,\|v\|_{L^2(\Omega)}\qquad \forall v\in L^2(\Omega).
\end{equation}
In particular, $\mu$ in \eqref{eq:mu-def} is a pointwise-defined function on $W$ whenever $u_0\in L^2(\Omega)$. 
\end{lemma}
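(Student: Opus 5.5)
The plan is to exploit the separation \eqref{eq:W-sep}: for $x\in W$ and $y\in\Omega$ we have $|x-y|\ge d_0>0$, so the kernel $K(x,y):=|x-y|^{-d-2s}$ is smooth and bounded on $\overline W\times\Omega$. First I will justify the formula \eqref{eq:L-def}. Since $v=0$ near $x$ (on the ball $B(x,d_0)$, which lies outside $\Omega$), the principal value in \eqref{eq:fraclap_def} is not singular. Because $v(x)=0$, the integrand reduces to $-v(y)|x-y|^{-d-2s}$ and is supported in $y\in\Omega$. I also have to make sure this pointwise expression is the distributional $(-\Delta)^s v$ restricted to $W$. For that I would test against $\phi\in C_c^\infty(W)$ and use the bilinear form \eqref{eq:bilinear_a}. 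Since $v\phi\equiv0$, the cross terms give
\[
a(v,\phi)=-c_{d,s}\int_W\int_\Omega \frac{v(y)\phi(x)}{|x-y|^{d+2s}}\,dy\,dx ,
\]
where the factor $\tfrac12$ is absorbed by the symmetry in $(x,y)$. Fubini applies because the integrand is absolutely integrable, $\Omega$ and $W$ both being bounded. For $v\in L^2(\Omega)$ rather than $\widetilde H^s$, I take this identity as the definition of the distribution, and it is consistent on the dense subspace.

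For the $L^\infty$ bound \eqref{eq:pointwise-bound}, Cauchy--Schwarz gives
\[
|(\mathcal Lv)(x)|\le c_{d,s}\|v\|_{L^2(\Omega)}\Big(\int_\Omega |x-y|^{-2d-4s}dy\Big)^{1/2}\le c_{d,s}\,d_0^{-d-2s}|\Omega|^{1/2}\|v\|_{L^2(\Omega)} .
\]

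For smoothness and \eqref{eq:L_Hm_bound}, I differentiate under the integral sign. For any multi-index $\alpha$ there is a bound $|\partial_x^\alpha |x-y|^{-d-2s}|\le C_\alpha |x-y|^{-d-2s-|\alpha|}\le C_\alpha d_0^{-d-2s-|\alpha|}$, uniformly for $x$ in a neighborhood of $\overline W$ at distance at least $d_0/2$ from $\Omega$. Dominated convergence, with dominating function $C|v(y)|\in L^1(\Omega)$, then justifies the interchange, which gives $\mathcal Lv\in C^\infty(W)$. The same Cauchy--Schwarz estimate applies to each derivative, so $\|\partial^\alpha\mathcal Lv\|_{L^\infty(W)}\le C_\alpha'\|v\|_{L^2(\Omega)}$. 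Since $W$ is bounded, $\|\cdot\|_{L^2(W)}\le|W|^{1/2}\|\cdot\|_{L^\infty(W)}$, and summing over $|\alpha|\le m$ yields \eqref{eq:L_Hm_bound} with $C_m$ depending on $m,d,s,d_0,|\Omega|,|W|$.

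Finally, for $\mu$: in the noiseless model $\mu=((-\Delta)^s u_0)|_W=\mathcal Lu_0$ with $u_0\in\widetilde H^s(\Omega)\subset L^2(\Omega)$, so $\mu$ is the smooth function just constructed. The only delicate point is the identification of the distributional flux with the pointwise integral, which is handled by the Fubini computation above; all remaining steps are routine because the kernel is nonsingular.
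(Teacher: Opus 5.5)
Your proposal is correct and follows the paper's own route. Both arguments use the separation to bound the kernel and its $x$-derivatives by $C_\beta d_0^{-d-2s-|\beta|}$, differentiate under the integral, apply Cauchy--Schwarz in $y$ for uniform bounds, and use boundedness of $W$ to pass to $H^m(W)$. The one addition is your Fubini/bilinear-form check that the pointwise integral agrees with the distributional $(-\Delta)^s v$ on $W$, which the paper takes for granted by adopting \eqref{eq:L-def} as the definition. For $v\in L^2(\Omega)$ you could do this check more directly via $\langle v,(-\Delta)^s\phi\rangle$ instead of appealing to density.
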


\begin{proof}
Fix $x\in W$.
Since $\dist(x,\Omega)\ge d_0$, we have $|x-y|\ge d_0$ for all $y\in\Omega$; hence, $|x-y|^{-d-2s}\le d_0^{-d-2s}$ and
\[
\int_{\Omega} \frac{|v(y)|}{|x-y|^{d+2s}}\,dy
\le d_0^{-d-2s}\int_{\Omega} |v(y)|\,dy
\le d_0^{-d-2s}|\Omega|^{1/2}\,\|v\|_{L^2(\Omega)}.
\]
This proves absolute convergence and \eqref{eq:pointwise-bound}.
Continuity of $\mathcal Lv$ on $W$ follows from dominated convergence since $x\mapsto |x-y|^{-d-2s}$ is smooth on $W$ for each fixed $y\in\Omega$ and is uniformly dominated by $d_0^{-d-2s}$. For each multi-index $\beta\in \mathbb N_0^d$ with $|\beta|\le m$, there exists a constant
$C_\beta>0$ such that
\(
|\partial_x^\beta k(x,y)| \le C_\beta\, d_0^{-d-2s-|\beta|}
\text{ for all } x\in W,\ y\in \Omega,
\)
where $k(x,y):=|x-y|^{-d-2s}$. Differentiating under the integral sign gives
\[
\partial_x^\beta (\mathcal Lv)(x)
=
-c_{d,s}\int_{\Omega} v(y)\,\partial_x^\beta k(x,y)\,dy.
\]
Using Cauchy--Schwarz in $y$, we obtain
\[
|\partial_x^\beta (\mathcal Lv)(x)|
\le
c_{d,s}
\Big(\int_{\Omega} |v(y)|^2\,dy\Big)^{1/2}
\Big(\int_{\Omega} |\partial_x^\beta k(x,y)|^2\,dy\Big)^{1/2}
\le C \|v\|_{L^2(\Omega)},
\]
uniformly in $x\in W$. Since $W$ is bounded,
$\partial_x^\beta (\mathcal Lv)\in L^2(W)$ and
\(
\|\partial_x^\beta (\mathcal Lv)\|_{L^2(W)} \le C \|v\|_{L^2(\Omega)}
\) by H\"older's inequality.
Summing over all $|\beta|\le m$ yields \eqref{eq:L_Hm_bound}.
\end{proof}

The operator $\mathcal L$ in \eqref{eq:L-def} is injective and has a dense range in $H^{-s}(W)$ by the exterior unique continuation property for the fractional Laplacian for any open $W \subset \Omega_e$, see \cite{ghosh2020calderon,ruland2015unique}.
In our setting, we additionally assumed that \(W\) is bounded.
This boundedness is used in Lemma \ref{lem:pointwise} to obtain the \(L^2(W)\)-based smoothing estimates and, following the cutoff argument in \cite[Lemma 2.2]{ghosh2020uniqueness}, it is also the natural hypothesis for compactness of \(\mathcal L\).
Under the separation assumption \eqref{eq:W-sep}, $\mathcal L$ is also compact (indeed, smooth), which is the fundamental mechanism behind the ill-posedness of $\mathcal L u_0|_{W} = \mu$ and the need for regularization \cite{ghosh2020uniqueness}. 
In Section~\ref{sec:recon}, we develop a fully discrete regularized inversion of $\mathcal L u_0|_{W} = \mu$ and a stable coefficient recovery strategy.

\section{Discrete unique continuation and reconstruction of the potential}\label{sec:recon}
\subsection{Discrete setting: truncation domain, finite element spaces, and discrete norms}\label{subsec:trunc-fe}

To perform computations, we truncate $\mathbb R^d$ to a bounded domain
\[
\Omega_R:=(-R,R)^d,\qquad \Omega\cup W\subset \Omega_R,
\]
where $R>0$ is chosen to be sufficiently large.

Let $\mathcal T_h$ be a quasi-uniform mesh of $\Omega_R$ with mesh size $h$, and let
$V_h\subset H^1(\Omega_R)$ be the continuous, piecewise-polynomial finite element space of degree $p\ge 1$, supported in $\Omega_R$.
We define the interior subspace:
\begin{equation}\label{eq:Vh0}
V_{h,0}:=\{v_h\in V_h:\ v_h=0\ \text{on}\ \Omega_R\setminus\Omega\}.
\end{equation}
\begin{assumption}[Mesh fitted to $\Omega$]\label{ass:fitted_mesh}
Assume that $\Omega$ is polygonal/polyhedral and that, for all sufficiently small $h$, the mesh
$\mathcal T_h$ of $\Omega_R$ is fitted to $\Omega$, i.e.\ $\partial\Omega$ is a union of faces of
$\mathcal T_h$. In particular, each element $K\in\mathcal T_h$ satisfies either
$K\subset \overline\Omega$ or $K\subset \Omega_R\setminus \Omega$.
Then $V_{h,0}\subset H_0^1(\Omega)$ (identified with its zero extension to $\Omega_R$), and hence
$V_{h,0}\subset \widetilde H^s(\Omega)$ for every $0<s<1$.
\end{assumption}
Let $\{\varphi_i\}_{i=1}^{N_0}$ be the nodal basis of $V_{h,0}$.
Let $\bar u_f$ denote the zero extension of $f$ to $\R^d$, and define
\begin{equation}\label{eq:uhf-def}
u_{h,f}:=I_h \bar u_f\in V_h,
\end{equation}
where $I_h$ is the nodal interpolant on $\Omega_R$. Since $\bar u_f\in C_c^\infty(\Omega_R)$, standard interpolation yields
\begin{equation}\label{eq:uhf-err}
\|\bar u_f-u_{h,f}\|_{H^m(\Omega_R)}\le C h^{p+1-m}\|\bar u_f\|_{H^{p+1}(\Omega_R)}
\qquad (0\le m\le p+1).
\end{equation}
In particular, for every $x\in W$ the quantity $(-\Delta)^s u_{h,f}(x)$ is pointwise well-defined
(see Lemma~\ref{lem:pointwise}).

Let \(\mathcal T_h^W\) be a shape-regular mesh, so each element $K\subset W$ satisfies
$|K|\simeq h^d$ and let
\(S_h^W \subset C(\overline W)\) be the continuous, piecewise-polynomial finite element space on
\(\mathcal T_h^W\). Denote by \(\{\vartheta_k\}_{k=1}^{N_W}\) the nodal basis of \(S_h^W\), and by
\(\{x_k\}_{k=1}^{N_W}\subset \overline W\) the associated nodal points. For a fixed polynomial degree,
\[
N_W=\dim(S_h^W)\simeq C\,h^{-d}.
\]
We define the data space
\[
Y:=\mathbb R^{N_W},\qquad
\langle y,z\rangle_Y := y^\top \mathbf W z,\qquad
\|y\|_Y^2 := y^\top \mathbf W y,
\]
where \(\mathbf W\in\mathbb R^{N_W\times N_W}\) with
\(
\mathbf W_{ij}:=\int_W \vartheta_i(x)\vartheta_j(x)\,dx
\), and \(\mathbf W\) is symmetric positive definite (SPD).
For any continuous function \(\psi\) on \(W\), we identify its sampled vector as follows:
\begin{equation}\label{eq:psi_W_def}
\psi_W := (\psi(x_1),\dots,\psi(x_{N_W}))^\top\in Y.
\end{equation}
We also define the nodal interpolant
\[
I_h^W\psi := \sum_{k=1}^{N_W}\psi(x_k)\vartheta_k \in S_h^W.
\]
Define the discrete observation bilinear form by
\begin{equation}\label{eq:QhW_def}
Q_h^W(\phi,\psi):=\phi_W^\top \mathbf W \psi_W
=\int_W I_h^W\phi(x)\,I_h^W\psi(x)\,dx,
\qquad \phi,\psi\in C(\overline W).
\end{equation}
Thus, for any continuous \(\psi\) on \(W\),
\begin{equation}\label{eq:Yh_norm_as_quad}
\|\psi_W\|_Y^2 = Q_h^W(\psi,\psi)=\|I_h^W\psi\|_{L^2(W)}^2.
\end{equation}

\begin{assumption}[Order-$m$ consistency of the discrete observation inner product on \(W\)]\label{ass:quad_order_m}
Let \(\mathcal T_h^W\) be a shape-regular, quasi-uniform partition of \(W\), and let
\(S_h^W\), \(I_h^W\), and \(\mathbf W\) be defined as above.
Assume there exist an integer \(m>d/2\) and a constant \(C>0\), independent of \(h\), such that
for all \(\phi\in H^{m}(W)\),
\begin{equation}\label{eq:obs_interp_error}
\|I_h^W\phi-\phi\|_{L^2(W)} \le C h^{m}\|\phi\|_{H^{m}(W)}.
\end{equation}
\end{assumption}
Since \(m>d/2\), the nodal values \(\phi(x_k)\) are well defined for \(\phi\in H^m(W)\) by
Sobolev embedding. Moreover, Assumption~\ref{ass:quad_order_m} implies
\begin{equation}\label{eq:quad_error}
\left|
\int_W \phi(x)\psi(x)\,dx - Q_h^W(\phi,\psi)
\right|
\le C h^{m}\|\phi\|_{H^{m}(W)}\|\psi\|_{H^{m}(W)}
\qquad \forall\,\phi,\psi\in H^{m}(W).
\end{equation}
Indeed,
\[
\begin{aligned}
\left|
\int_W \phi\psi\,dx - Q_h^W(\phi,\psi)
\right|
&=
\left|
\int_W \phi\psi\,dx - \int_W I_h^W\phi\, I_h^W\psi\,dx
\right| \\
&\le
\left|
\int_W (\phi-I_h^W\phi)\psi\,dx
\right|
+
\left|
\int_W I_h^W\phi\,(\psi-I_h^W\psi)\,dx
\right| \\
&\le
\|\phi-I_h^W\phi\|_{L^2(W)}\|\psi\|_{L^2(W)}
+
\|I_h^W\phi\|_{L^2(W)}\|\psi-I_h^W\psi\|_{L^2(W)}.
\end{aligned}
\]
Using Assumption~\ref{ass:quad_order_m} and
\[
\|I_h^W\phi\|_{L^2(W)}
\le
\|\phi\|_{L^2(W)}+\|I_h^W\phi-\phi\|_{L^2(W)}
\le C\|\phi\|_{H^m(W)},
\]
we obtain \eqref{eq:quad_error}.

\begin{lemma}[Discrete data norm converges to $L^2(W)$]\label{lem:Y_to_L2_Hm}
Assume $\mathrm{dist}(W,\Omega)=d_0>0$ and Assumption~\ref{ass:quad_order_m} for some integer $m> d/2$.
Then for every $v\in \widetilde H^s(\Omega)$,
\begin{equation}\label{eq:Y_to_L2_conv_Hm}
\|(\mathcal L v)_W\|_{Y}^2 \longrightarrow \|\mathcal L v\|_{L^2(W)}^2
\qquad \text{as } h\to 0,
\end{equation}
and there exists $C>0$ independent of $h$ such that
\begin{equation}\label{eq:Y_to_L2_rate_Hm}
\big|\,\|(\mathcal L v)_W\|_{Y}^2-\|\mathcal L v\|_{L^2(W)}^2\,\big|
\le C\,h^{m}\,\|v\|_{L^2(\Omega)}^2
\le C\,h^{m}\,\|v\|_{H^s(\mathbb R^d)}^2.
\end{equation}
\end{lemma}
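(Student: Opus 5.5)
The plan is to reduce the statement to the quadrature consistency estimate \eqref{eq:quad_error} with $\phi=\psi=\mathcal L v$, and then control the $H^m(W)$ norms through the smoothing bound of Lemma~\ref{lem:pointwise}. Fix $v\in\widetilde H^s(\Omega)$, identified with its zero extension. Since $\widetilde H^s(\Omega)\subset L^2(\Omega)$, Lemma~\ref{lem:pointwise} shows that $\mathcal Lv$ is smooth on $W$, so its nodal values $(\mathcal Lv)(x_k)$ make sense. Moreover, \eqref{eq:L_Hm_bound} gives $\|\mathcal Lv\|_{H^m(W)}\le C_m\|v\|_{L^2(\Omega)}$ with $C_m$ depending only on $d_0,s,\Omega,W,m$ and not on $h$. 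Here we use that $W$ is bounded, which is part of the standing setting of Lemma~\ref{lem:pointwise}.

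By \eqref{eq:Yh_norm_as_quad} we have $\|(\mathcal Lv)_W\|_Y^2=Q_h^W(\mathcal Lv,\mathcal Lv)$. Applying \eqref{eq:quad_error} with $\phi=\psi=\mathcal Lv\in H^m(W)$ then yields
\[
\big|\,\|(\mathcal Lv)_W\|_Y^2-\|\mathcal Lv\|_{L^2(W)}^2\big|\le C h^m\|\mathcal Lv\|_{H^m(W)}^2\le C\,C_m^2\,h^m\|v\|_{L^2(\Omega)}^2 .
\]
This is the first inequality in \eqref{eq:Y_to_L2_rate_Hm}.

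The second inequality is immediate from $\|v\|_{L^2(\Omega)}=\|v\|_{L^2(\mathbb R^d)}\le\|v\|_{H^s(\mathbb R^d)}$, which holds because $v$ vanishes outside $\Omega$. The convergence statement \eqref{eq:Y_to_L2_conv_Hm} follows by letting $h\to0$, since $m>0$ and the right-hand side is $h$-independent apart from $h^m$.

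The one point needing care is that the constant in \eqref{eq:quad_error}, and hence in Assumption~\ref{ass:quad_order_m}, is uniform in $h$ on a fixed domain $W$. This is assumed, since the family $\mathcal T_h^W$ is quasi-uniform. I therefore expect no real obstacle: the main work, the $H^m$ smoothing of $\mathcal L$ away from $\Omega$, is already contained in Lemma~\ref{lem:pointwise}.
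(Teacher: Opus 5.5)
Your proposal is correct and follows the paper's proof essentially verbatim: you write $\|(\mathcal Lv)_W\|_Y^2=Q_h^W(\mathcal Lv,\mathcal Lv)$ via \eqref{eq:Yh_norm_as_quad}, apply \eqref{eq:quad_error} with $\phi=\psi=\mathcal Lv$, and bound $\|\mathcal Lv\|_{H^m(W)}$ by \eqref{eq:L_Hm_bound}. You then finish, as the paper does, with $\|v\|_{L^2(\Omega)}\le\|v\|_{H^s(\mathbb R^d)}$ and let $h\to0$; your remark that the boundedness of $W$ enters through Lemma~\ref{lem:pointwise} is a fair clarification of an implicit standing assumption.
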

\begin{proof}
Fix \(v\in\widetilde H^s(\Omega)\). By Lemma~\ref{lem:pointwise}, \(\mathcal Lv\in H^m(W)\) and
\(
\|\mathcal Lv\|_{H^m(W)} \le C\|v\|_{L^2(\Omega)}.
\)
Using \eqref{eq:Yh_norm_as_quad} and \eqref{eq:quad_error} with \(\phi=\psi=\mathcal Lv\), we obtain
\begin{align*}
\bigl|\,\|(\mathcal Lv)_W\|_Y^2-\|\mathcal Lv\|_{L^2(W)}^2\,\bigr|
&=
\left|
Q_h^W(\mathcal Lv,\mathcal Lv)-\int_W |\mathcal Lv|^2\,dx
\right|\le C\,h^m\,\|\mathcal Lv\|_{H^m(W)}^2 \le C\,h^m\,\|v\|_{L^2(\Omega)}^2.
\end{align*}
This proves \eqref{eq:Y_to_L2_rate_Hm}. The convergence \eqref{eq:Y_to_L2_conv_Hm}
follows by letting \(h\to0\). Finally,
\(
\|v\|_{L^2(\Omega)}\le \|v\|_{H^s(\mathbb R^d)}
\)
gives the last inequality in \eqref{eq:Y_to_L2_rate_Hm}.
\end{proof}


Define $L_h:V_{h,0}\to Y$ by sampling $\mathcal{L}v_h$ at $\{x_k\}_{k=1}^{N_W} \subset W$:
\[
(L_h v_h)_k := (\mathcal{L}v_h)(x_k)= -c_{d,s}\int_{\Omega} \frac{v_h(y)}{|x_k-y|^{d+2s}}\,dy.
\]
Writing $v_h=\sum_{i=1}^{N_0} v_i\varphi_i$, we obtain the dense matrix representation
\begin{equation}\label{eq:B-matrix}
L_h v_h = B\mathbf v,\qquad
B\in\mathbb{R}^{N_W\times N_0},\qquad
B_{k i} := -c_{d,s}\int_{\Omega} \frac{\varphi_i(y)}{|x_k-y|^{d+2s}}\,dy, \quad \mathbf v = (v_1,\dots,v_{N_0})^\top.
\end{equation}
Because $|x_k-y|\ge d_0$ and $\varphi_i$ is polynomial on each element, the integrand is smooth on each element
and can be assembled by standard element-wise quadrature of sufficiently high order. Assume that a noisy measurement $g^\delta$ on $W$ is available with noise level $\delta$ in the sampled norm, i.e.,
\begin{equation}\label{eq:noise}
\|g^\delta_W-g_W\|_Y \le \delta.
\end{equation}
We define the sampled noisy preprocessed data vector by
\begin{equation}\label{eq:mu-vector}
\mu_W^\delta :=\bigl(g^\delta(x_k)-(\mathcal L_{h,R}u_{h,f})_k\bigr)_{k=1}^{N_W} \in Y,
\end{equation}
where the practical evaluation operator \(\mathcal L_{h,R}\) is given nodewise by
\[
(\mathcal L_{h,R}v)_k
:=
c_{d,s}\,\operatorname{P.V.}\!\int_{\Omega_R}
\frac{v(x_k)-v(y)}{|x_k-y|^{d+2s}}\,dy,
\qquad k=1,\dots,N_W.
\]

\subsection{Truncated fractional energy and truncation error}\label{subsec:trunc-energy}
We define the truncated bilinear form for \(u,v\in H^s(\mathbb R^d)\) whose support is contained in \(\overline{\Omega}_R\) by
\[
a_R(u,v):=\frac{c_{d,s}}{2}\iint\limits_{\Omega_R\times\Omega_R}\frac{(u(x)-u(y))(v(x)-v(y))}{|x-y|^{d+2s}}\,dx\,dy
+c_{d,s}\int\limits_{\Omega_R}u(x)v(x)\Big(\int\limits_{\mathbb{R}^d\setminus\Omega_R}\frac{dy}{|x-y|^{d+2s}}\Big)\,dx.
\]
 For every \(v_h\in V_{h,0}\), we have \(a_R(v_h,v_h)=a(v_h,v_h)\). Therefore, by the standard norm equivalence on
\(H^s(\mathbb{R}^d)\), there exists a constant \(C=C(d,s)>0\) such that
\[
\|v_h\|_{H^s(\mathbb{R}^d)}^2
\le C\Bigl(\|v_h\|_{L^2(\mathbb{R}^d)}^2 + a(v_h,v_h)\Bigr)
= C\Bigl(\|v_h\|_{L^2(\Omega_R)}^2 + a_R(v_h,v_h)\Bigr)
= C\|v_h\|_{s,R,h}^2.
\]

\begin{lemma}[Tail integral bound]\label{lem:tail-1d}
Let $\Omega_R=(-R,R)^d$ and let $x\in \Omega_R$. Set
\(
r_x := \operatorname{dist}(x,\partial\Omega_R)=R-|x|_\infty.
\) Let $\Omega\subset (-\rho,\rho)^d$. Then
\begin{equation}\label{eqn:est_tail_term}
    \int_{\mathbb R^d\setminus \Omega_R}\frac{dy}{|x-y|^{d+2s}}
\le
C_{d,s}(R-\rho)^{-2s}
\qquad \forall x\in \Omega.
\end{equation}
Consequently, for every $v$ supported in $\Omega$,
\begin{equation}\label{eq:trunc-err-1d}
\int_{\Omega_R}
v(x)^2
\left(
\int_{\mathbb R^d\setminus \Omega_R}\frac{dy}{|x-y|^{d+2s}}
\right)\,dx
\le
C_{d,s}(R-\rho)^{-2s}\|v\|_{L^2(\Omega)}^2.
\end{equation}
\end{lemma}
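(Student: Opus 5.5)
The plan is to reduce everything to a geometric lower bound on $|x-y|$ for $x\in\Omega$ and $y\notin\Omega_R$, and then integrate in polar coordinates. Fix $x\in\Omega\subset(-\rho,\rho)^d$, so $|x|_\infty<\rho$. For any $y\in\mathbb R^d\setminus\Omega_R$ we have $|y|_\infty\ge R$. Hence
\[
|x-y|\ge |x-y|_\infty\ge |y|_\infty-|x|_\infty> R-\rho,
\]
which also gives $r_x\ge R-\rho$. The complement $\mathbb R^d\setminus\Omega_R$ is therefore contained in the exterior of the Euclidean ball $B(x,R-\rho)$. We may assume $R>\rho$, which holds since $\Omega\subset\Omega_R$ with margin.

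Next, enlarge the domain of integration to $\{|x-y|\ge R-\rho\}$ and pass to polar coordinates centered at $x$:
\[
\int_{\mathbb R^d\setminus\Omega_R}\frac{dy}{|x-y|^{d+2s}}\le \int_{|z|\ge R-\rho}\frac{dz}{|z|^{d+2s}}=|\mathbb S^{d-1}|\int_{R-\rho}^\infty r^{-1-2s}\,dr=\frac{|\mathbb S^{d-1}|}{2s}(R-\rho)^{-2s}.
\]
This proves \eqref{eqn:est_tail_term} with $C_{d,s}=|\mathbb S^{d-1}|/(2s)$. In $d=1$ the sphere measure is $2$, and the constant is uniform in $x\in\Omega$. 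The integral converges because $s>0$.

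For \eqref{eq:trunc-err-1d}, note that $v$ vanishes outside $\Omega$, so the outer integral over $\Omega_R$ reduces to one over $\Omega$. There the bracketed tail factor is bounded pointwise by $C_{d,s}(R-\rho)^{-2s}$, and pulling this constant out leaves $\|v\|_{L^2(\Omega)}^2$.

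There is no real obstacle here. The only point requiring care is the norm comparison $|\cdot|_\infty\le|\cdot|$, together with the fact that the bound must use $\rho$ rather than $r_x$, since $r_x$ is only defined relative to $\partial\Omega_R$.
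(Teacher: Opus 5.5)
Your proposal is correct and follows essentially the paper's argument. The paper first bounds the tail by the integral over $\mathbb R^d\setminus B(x,r_x)$, evaluates it in spherical coordinates as $\frac{|\mathbb S^{d-1}|}{2s}\,r_x^{-2s}$, and only then uses $r_x\ge R-\rho$, whereas you insert the radius $R-\rho$ directly via $|x-y|\ge |y|_\infty-|x|_\infty$ (a cosmetic reordering), and both obtain the second estimate by pulling the uniform pointwise bound out of the integral over the support of $v$.
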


\begin{proof}
Note that we have
\[
\int_{\mathbb R^d\setminus \Omega_R}\frac{dy}{|x-y|^{d+2s}}
\le
\int_{\mathbb R^d\setminus B(x,r_x)}\frac{dy}{|x-y|^{d+2s}}.
\]
Using spherical coordinates centered at $x$,
\[
\int_{\mathbb R^d\setminus B(x,r_x)}\frac{dy}{|x-y|^{d+2s}}
=
|\mathbb S^{d-1}|\int_{r_x}^{\infty} r^{-1-2s}\,dr
=
\frac{|\mathbb S^{d-1}|}{2s}\, r_x^{-2s}.
\]
Then the estimate \eqref{eqn:est_tail_term} follows from $r_x\ge R-\rho$ for all
$x\in \Omega\subset (-\rho,\rho)^d$. Multiplying by $v(x)^2$, integrating over $\Omega_R$, using $\mathrm{supp}(v)\subset(-\rho,\rho)^d$ gives the estimate \eqref{eq:trunc-err-1d}.
\end{proof}

\subsubsection{Deterministic consistency of the preprocessed data and total consistency error}
The practical reconstruction uses the discrete noisy preprocessed data vector \(\mu_W^\delta \in Y\) from \eqref{eq:mu-vector} and the exact sampled preprocessed data vector \(\mu_W \in Y\).
Since \(\|g_W^\delta-g_W\|_Y \le \delta\), we obtain
\[
\|\mu_W^\delta-\mu_W\|_Y
\le
\|g_W^\delta-g_W\|_Y
+
\bigl\|\bigl((-\Delta)^s \bar u_f\bigr)_W-\bigl((-\Delta)^s u_{h,f}\bigr)_W\bigr\|_Y
+
\bigl\|\bigl((-\Delta)^s u_{h,f}\bigr)_W-\mathcal L_{h,R}u_{h,f}\bigr\|_Y .
\]
We therefore define
\[
\eta_I(h)
:=
\bigl\|\bigl((-\Delta)^s \bar u_f\bigr)_W-\bigl((-\Delta)^s u_{h,f}\bigr)_W\bigr\|_Y, \quad \text{ and } \quad \eta_t(h,R)
:=
\bigl\|\bigl((-\Delta)^s u_{h,f}\bigr)_W-\mathcal L_{h,R}u_{h,f}\bigr\|_Y.
\]
Hence
\[
\|\mu_W^\delta-\mu_W\|_Y
\le
\delta+\eta_I(h)+\eta_t(h,R).
\]
By the interpolation estimate for \(u_{h,f}\), we obtain \(\eta_I(h)\to0\) as \(h\to0\) whereas \(\eta_t(h,R)\) is the additional truncation error caused by replacing the full-space operator by its practical evaluation on \(\Omega_R\). Moreover, since \(u_{h,f}=0\) on \(\mathbb{R}^d\setminus \Omega_R\), for each
measurement point \(x_k\in W\) one has
\[
\bigl((-\Delta)^s u_{h,f}\bigr)(x_k)-(\mathcal L_{h,R}u_{h,f})_k
=
c_{d,s}\,u_{h,f}(x_k)
\int_{\mathbb{R}^d\setminus\Omega_R}
\frac{dy}{|x_k-y|^{d+2s}}.
\]
Thus, by the same tail estimate as in Lemma \ref{lem:tail-1d}, \(\eta_t(h,R)\to 0\) as
\(R\to\infty\).
Finally, we write
\[
\|\mu_W^\delta-\mu_W\|_Y \le \delta+\eta(h,R),
\]
where $\eta(h,R):=\eta_I(h)+\eta_t(h,R)$.

\subsection{Discrete Tikhonov functional and the normal equations}\label{subsec:tikhonov}
On $V_{h,0}$, we use the consistent discrete norm
\begin{equation}\label{eq:norm-sh}
\|v_h\|_{s,R,h}^2 := a_R(v_h,v_h) + \|v_h\|_{L^2(\Omega_R)}^2.
\end{equation}
Let $A_0,M_0\in\mathbb{R}^{N_0\times N_0}$ be the interior stiffness and mass matrices defined by
\begin{equation}\label{eq:A0M0}
(A_0)_{ij}:=a_R(\varphi_i,\varphi_j),\qquad (M_0)_{ij}:=(\varphi_i,\varphi_j)_{L^2(\Omega_R)}.
\end{equation}
Then, for $v_h=\sum\limits_{i=1}^{N_0} v_i\varphi_i$,
\[
\|v_h\|_{s,R,h}^2 = \mathbf v^\top S\mathbf v,
\]
where $S:=A_0+M_0.$

For $\alpha>0$, define the discrete Tikhonov functional $J_{h,\alpha}:\mathbb{R}^{N_0}\to\mathbb{R}$ by
\begin{equation}\label{eq:J}
J_{h,\alpha}(\mathbf v) := \|B\mathbf v-\mu^\delta_W\|_Y^2 + \alpha\, \mathbf v^\top S \mathbf v
= (B\mathbf v-\mu^\delta_W)^\top \mathbf W(B\mathbf v-\mu^\delta_W) + \alpha \mathbf v^\top S \mathbf v.
\end{equation}
Let $\{\varphi_i\}_{i=1}^{N_0}$ be a basis of $V_{h,0}$.
For any coefficient vector $\mathbf v\in\mathbb{R}^{N_0}$, we set
\[
\mathcal R_h\mathbf v := v_h := \sum_{i=1}^{N_0} v_i\,\varphi_i\in V_{h,0}.
\]
Let $\mathbf v_h^\alpha$ denote the unique minimizer of $J_{h,\alpha}$ and set
\[
u_{0,h}^\alpha:=\mathcal R_h\mathbf v_h^\alpha\in V_{h,0},
\qquad
u_h^\alpha:=u_{h,f}+u_{0,h}^\alpha.
\]

\begin{theorem}[Existence, uniqueness, and normal equations]\label{thm:min}
For every $\alpha>0$, the discrete Tikhonov functional \eqref{eq:J} admits a unique minimizer
$\mathbf v_h^\alpha\in\mathbb{R}^{N_0}$. Moreover, $\mathbf v_h^\alpha$ is the unique solution of the 
SPD linear system
\begin{equation}\label{eq:normal}
(B^\top \mathbf W B + \alpha S)\, \mathbf v_h^\alpha = B^\top \mathbf W\,\mu^\delta_W .
\end{equation}
\end{theorem}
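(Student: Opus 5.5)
The plan is to treat $J_{h,\alpha}$ as a strictly convex quadratic form on $\mathbb{R}^{N_0}$. Expanding \eqref{eq:J} gives
\[
J_{h,\alpha}(\mathbf v)=\mathbf v^\top(B^\top\mathbf W B+\alpha S)\mathbf v-2\,\mathbf v^\top B^\top\mathbf W\mu_W^\delta+(\mu_W^\delta)^\top\mathbf W\mu_W^\delta ,
\]
where the symmetry of $\mathbf W$ is used to merge the two cross terms. Everything then reduces to showing that the matrix $K:=B^\top\mathbf W B+\alpha S$ is symmetric positive definite.

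Symmetry of $K$ is immediate: $\mathbf W$ is symmetric by definition, and $A_0$, $M_0$ are Gram-type matrices of the symmetric forms $a_R$ and $(\cdot,\cdot)_{L^2(\Omega_R)}$.

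For positivity, I note first that $\mathbf v^\top B^\top\mathbf W B\mathbf v=\|B\mathbf v\|_Y^2\ge 0$, because $\mathbf W$ is SPD. Next,
\[
\mathbf v^\top S\mathbf v=\|v_h\|_{s,R,h}^2\ge\|v_h\|_{L^2(\Omega_R)}^2 ,
\]
since $a_R(v_h,v_h)\ge0$: both terms in $a_R(v,v)$ have nonnegative integrands. If $\mathbf v\neq0$, then $v_h=\mathcal R_h\mathbf v\neq0$ as an element of $V_{h,0}$, by linear independence of the nodal basis. A nonzero continuous piecewise polynomial has positive $L^2$ norm, so $\mathbf v^\top M_0\mathbf v>0$. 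Because $\alpha>0$, this gives $\mathbf v^\top K\mathbf v>0$ for every $\mathbf v\neq 0$. This is the only step needing any care, namely that $M_0$ is SPD because the $\varphi_i$ are linearly independent.

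With $K$ SPD, I would complete the square:
\[
J_{h,\alpha}(\mathbf v)=(\mathbf v-\mathbf v^\ast)^\top K(\mathbf v-\mathbf v^\ast)+\text{const},\qquad \mathbf v^\ast:=K^{-1}B^\top\mathbf W\mu_W^\delta .
\]
Since $K$ is SPD, the first term is positive unless $\mathbf v=\mathbf v^\ast$. Hence $\mathbf v^\ast$ is the unique global minimizer, which gives existence and uniqueness. Equivalently, $\nabla J_{h,\alpha}(\mathbf v)=2(K\mathbf v-B^\top\mathbf W\mu_W^\delta)$, so the first-order condition is exactly \eqref{eq:normal}. Invertibility of $K$ makes this system uniquely solvable, and its solution coincides with $\mathbf v_h^\alpha$.
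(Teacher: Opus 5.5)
Your proposal is correct and follows essentially the same route as the paper: expand the quadratic, show $K=B^\top\mathbf W B+\alpha S$ is SPD from $B^\top\mathbf W B\succeq 0$, $A_0\succeq 0$, $M_0\succ 0$ and $\alpha>0$, and read off \eqref{eq:normal} from the gradient. The only cosmetic difference is that you get existence and uniqueness of the minimizer by completing the square, whereas the paper argues via strict convexity from the Hessian $2K\succ 0$; the two arguments are equivalent.
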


\begin{proof}
Since $\mathbf W$ is symmetric, expanding \eqref{eq:J} gives
\[
J_{h,\alpha}(\mathbf v)
= \mathbf v^\top B^\top \mathbf W B\mathbf v - 2(\mu^\delta_W)^\top \mathbf W B\mathbf v + (\mu^\delta_W)^\top \mathbf W\mu^\delta_W
+ \alpha \mathbf v^\top S \mathbf v.
\]
Hence $J_{h,\alpha}$ is a $C^1$ quadratic on $\mathbb{R}^{N_0}$ with gradient
\[
\nabla J_{h,\alpha}(\mathbf v)
= 2B^\top \mathbf W (B\mathbf v-\mu^\delta_W) + 2\alpha S\mathbf v
= 2(B^\top \mathbf W B + \alpha S)\mathbf v - 2B^\top \mathbf W\mu^\delta_W.
\]
The first-order optimality condition $\nabla J_{h,\alpha}(\mathbf v)=0$ yields the normal equation \eqref{eq:normal}.

By construction, $\|y\|_{Y}^2:=y^\top \mathbf W y$ is the squared norm induced by the discrete measurement
inner product. Since the measurement weights are strictly positive,
the inner product is nondegenerate and $\mathbf W$ is SPD. Therefore, for any $\mathbf v\in\mathbb{R}^{N_0}$,
\[
\mathbf v^\top B^\top \mathbf W B\,\mathbf v = (B\mathbf v)^\top \mathbf W (B\mathbf v)\ge 0,
\]
 Hence $B^\top \mathbf W B$ is symmetric positive semidefinite.
Since $a_R(\cdot,\cdot)$ is the symmetric bilinear form that defines $A_0$ on $V_{h,0}$, 
then for $v_h=\sum_i v_i\varphi_i$ we have
\[
\mathbf v^\top A_0 \mathbf v = a_R(v_h,v_h)\ge 0,
\]
because $a_R(\cdot,\cdot)$ is an energy (nonnegative on the diagonal). Thus $A_0$ is symmetric positive semidefinite.
$M_0$ is the mass matrix. Therefore, for $\mathbf v\neq 0$,
the corresponding $v_h\not\equiv 0$ and
\[
\mathbf v^\top M_0 \mathbf v = \|v_h\|_{L^2(\Omega_R)}^2>0.
\]
Hence $M_0$ is SPD. 
For $\alpha>0$, define $K_\alpha:=B^\top \mathbf W B+\alpha S$. Then for any $\mathbf v\neq 0$,
\[
\mathbf v^\top K_\alpha \mathbf v
= \mathbf v^\top B^\top \mathbf W B\,\mathbf v + \alpha\,\mathbf v^\top S\mathbf v
\ge 0 + \alpha\,\mathbf v^\top S\mathbf v >0,
\]
hence $K_\alpha\succ 0$, and \eqref{eq:normal} has a unique solution.

Finally, since $J_{h,\alpha}$ is quadratic with Hessian $2K_\alpha\succ 0$, it is strictly convex on $\mathbb{R}^{N_0}$.
Therefore, its unique critical point is the unique global minimizer, i.e.\ $\mathbf v_h^\alpha$ exists and is unique.
\end{proof}

\subsection{Convergence of the reconstructed state}\label{subsec:state-conv}
We now prove that $u_h^\alpha$ converges to the true solution $u$ as $h\to 0$, $R\to\infty$ and $\alpha\to 0$ with a standard parameter coupling.

Consider the continuous Tikhonov problem.
Let us consider the (noise-free) continuous functional
\begin{equation}\label{eq:Jcont}
J_\alpha(v):=\|\mathcal{L}v-\mu\|_{L^2(W)}^2 + \alpha \|v\|_{ H^s(\R^d)}^2,
\qquad v\in \widetilde H^s(\Omega).
\end{equation}
Since $\alpha>0$, $J_\alpha$ is strictly convex and coercive; hence it has a unique minimizer, which we denote by
\begin{equation}\label{eq:u0alpha}
u_0^\alpha := \operatorname*{argmin}_{v\in \widetilde H^s(\Omega)} J_\alpha(v).
\end{equation}
Set $u^\alpha:=\bar u_f+u_0^\alpha$.

We use the unique continuation property: if $v\in \widetilde H^s(\Omega)$ and $\mathcal{L}v=0$ on $W$,
then $v\equiv 0$. This is exactly the injectivity needed for Tikhonov regularization to recover $u_0$
as $\alpha\to 0$ (it is the analytic input from the fractional Calder\'on problem theory \cite{ghosh2020uniqueness}).

\begin{lemma}\label{lem:Pi_h_density}
Let $\Pi_h:\widetilde H^s(\Omega)\to V_{h,0}$ be the $\widetilde H^s(\Omega)$-orthogonal projection, i.e.
\[
(\Pi_h v-v,\chi_h)_{ H^s(\R^d)}=0
\qquad \forall \chi_h\in V_{h,0}.
\]
Then $\|\Pi_h v\|_{H^s(\R^d)}\le \|v\|_{ H^s(\R^d)}$ for all $v\in {\widetilde H^s(\Omega)}$, and
\[
\|\Pi_h v-v\|_{H^s(\R^d)} \to 0
\qquad \text{as } h\to0
\]
for every $v\in \widetilde H^s(\Omega)$.
\end{lemma}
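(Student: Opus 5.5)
The plan is to treat the two claims separately: stability of $\Pi_h$ comes from Hilbert-space geometry, and convergence from Céa's lemma plus density.

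\textbf{Stability.} I work in the Hilbert space $\widetilde H^s(\Omega)$ with the inner product $(\cdot,\cdot)_{H^s(\mathbb R^d)}$. This space is closed in $H^s(\mathbb R^d)$ by definition, so it is itself a Hilbert space. By Assumption~\ref{ass:fitted_mesh}, $V_{h,0}$ is a finite-dimensional subspace of $\widetilde H^s(\Omega)$, hence closed, and the orthogonal projection $\Pi_h$ is well defined. Since $\Pi_h v$ is orthogonal to $v-\Pi_h v$, Pythagoras gives
\[
\|v\|_{H^s(\mathbb R^d)}^2=\|\Pi_h v\|_{H^s(\mathbb R^d)}^2+\|v-\Pi_h v\|_{H^s(\mathbb R^d)}^2 ,
\]
which yields the stability bound immediately.

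\textbf{Best approximation.} The orthogonality relation also gives the best-approximation property (Céa)
\[
\|\Pi_h v-v\|_{H^s(\mathbb R^d)}=\min_{\chi_h\in V_{h,0}}\|v-\chi_h\|_{H^s(\mathbb R^d)} .
\]
It therefore suffices to produce, for each $v$, some $\chi_h\in V_{h,0}$ with $\|v-\chi_h\|_{H^s}\to 0$.

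\textbf{Density argument.} Fix $\varepsilon>0$. By the definition of $\widetilde H^s(\Omega)$ as the closure of $C_c^\infty(\Omega)$, choose $\psi\in C_c^\infty(\Omega)$ with $\|v-\psi\|_{H^s(\mathbb R^d)}<\varepsilon$. Take $\chi_h:=I_h\psi$.
\begin{itemize}
\item Because the mesh is fitted (Assumption~\ref{ass:fitted_mesh}) and $\psi$ vanishes on $\partial\Omega$ and outside $\Omega$, all nodal values outside the open set $\Omega$, including those on $\partial\Omega$, are zero. Hence $I_h\psi$ vanishes on $\Omega_R\setminus\Omega$, i.e.\ $I_h\psi\in V_{h,0}$.
\item Extending $\psi-I_h\psi$ by zero gives a function in $H^1_0(\Omega_R)$. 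Using $\|w\|_{H^s(\mathbb R^d)}\le C\|w\|_{H^1(\mathbb R^d)}$ for $s<1$, together with the interpolation estimate (as in \eqref{eq:uhf-err}), I get
\[
\|\psi-I_h\psi\|_{H^s(\mathbb R^d)}\le C\|\psi-I_h\psi\|_{H^1(\Omega_R)}\le C h^{p}\|\psi\|_{H^{p+1}}\to0 .
\]
\item Thus $\limsup_{h\to0}\|\Pi_h v-v\|_{H^s}\le\varepsilon$. Since $\varepsilon$ is arbitrary, the convergence claim follows.
\end{itemize}

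\textbf{Main obstacle.} The delicate point is verifying that $I_h\psi\in V_{h,0}$ and that its zero extension is controlled in the $H^s(\mathbb R^d)$ norm. This needs the fitted-mesh assumption and the $H^1\hookrightarrow H^s$ embedding for compactly supported functions. An $H^1$-type (e.g.\ Scott–Zhang) interpolant preserving zero boundary values could replace $I_h$ if needed.
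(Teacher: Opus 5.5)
Your proof is correct and follows the paper's argument step for step. The shared steps are:
\begin{itemize}
\item contractivity of the orthogonal projection;
\item the best-approximation property in $V_{h,0}$;
\item density of $C_c^\infty(\Omega)$ in $\widetilde H^s(\Omega)$;
\item $I_h\psi\in V_{h,0}$ by the fitted-mesh assumption;
\item an $H^1$ interpolation estimate transferred to $H^s$.
\end{itemize}
For the last step you pass through the zero extension of $\psi-I_h\psi$ in $H^1(\mathbb R^d)$, where the paper cites $H^1(\Omega_R)\hookrightarrow H^s(\Omega_R)$. Your version is slightly more precise about why the $H^s(\mathbb R^d)$ norm itself is controlled.
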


\begin{proof}
The stability $\|\Pi_h v\|_{H^s(\R^d)}\le \|v\|_{H^s(\R^d)}$ is the standard contractivity of orthogonal projections.
To prove strong convergence, fix $v\in \widetilde H^s(\Omega)$ and $\varepsilon>0$. Since $C_c^\infty(\Omega)$ is dense in
$\widetilde H^s(\Omega)$, choose $\phi\in C_c^\infty(\Omega)$ such that
\[
\|v-\phi\|_{H^s(\R^d)}<\varepsilon.
\]
By Assumption~\ref{ass:fitted_mesh}, for all sufficiently small $h$ the nodal interpolant
$I_h^0\phi$ belongs to $V_{h,0}$. Standard fitted-mesh interpolation gives
\[
\|I_h^0\phi-\phi\|_{H^1(\Omega_R)} \to 0
\qquad \text{as } h\to0,
\]
(see, e.g., \cite[Ch.~4]{BrennerScott2008}).
Since $0<s<1$ and $\Omega_R = (-R,R)^d$, the embedding
$H^1(\Omega_R)\hookrightarrow H^s(\Omega_R)$ is continuous. Thus, we have
\[
\|I_h^0\phi-\phi\|_{H^s(\R^d)} \to 0.
\]
By the best-approximation property of orthogonal projections,
\[
\|\Pi_h v-v\|_{H^s(\R^d)}
\le \|I_h^0\phi-v\|_{H^s(\R^d)}
\le \|I_h^0\phi-\phi\|_{H^s(\R^d)}+\|\phi-v\|_{H^s(\R^d)}.
\]
Taking $\limsup_{h\to0}$ and then letting $\varepsilon\to0$ yields the claim.
\end{proof}

\begin{remark}
If one prefers a direct fractional-space interpolation estimate, the Scott--Zhang analysis of
\cite[Thm.~3.3]{Ciarlet2013} provides such a result for $s\in(1/2,1)$. We do not use it in the proof
of Theorem~\ref{thm:state}, since the density argument above works for all $0<s<1$.
\end{remark}

\begin{theorem}[Convergence of the reconstructed state $u_h^\alpha$]\label{thm:state}
Assume \eqref{eq:f-ass}--\eqref{eq:W-sep} hold and that the operator \( \mathcal L \) is injective. Let the noisy sampled DN data $g^\delta_W$ satisfy \eqref{eq:noise}, let  $\mu^\delta_W$ be the corresponding preprocessed data vector defined by \eqref{eq:mu-vector}, and the deterministic consistency error is $\eta(h,R)$. Choose a parameter rule $\alpha=\alpha(h,\delta,R)>0$ such that
\begin{equation}\label{eq:alpha-rule}
\alpha(h,\delta,R)\to 0,
\qquad
\frac{(\delta+\eta(h,R))^2}{\alpha(h,\delta,R)}\to 0
\quad\text{as}\qquad h\to 0,\ R\to\infty,\ \delta\to 0.
\end{equation}
Then, after extending by $0$ outside $\Omega_R$, the reconstructed states satisfy
\[
u_h^\alpha \to u \quad\text{in } H^s(\mathbb{R}^d),
\]
where $u =\bar u_f+u_0$,  $u_0 = \lim\limits_{\alpha\to 0}u_0^\alpha$, and $u_0^\alpha$ satisfies \eqref{eq:u0alpha}.
\end{theorem}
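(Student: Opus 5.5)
We split the error as $u_h^\alpha-u=(u_{h,f}-\bar u_f)+(u_{0,h}^\alpha-u_0)$. By \eqref{eq:uhf-err} with $m=1$ and the embedding $H^1(\Omega_R)\hookrightarrow H^s$ (note $\bar u_f$ and $u_{h,f}$ are supported in $\Omega_R$), the first term tends to zero. The work is to show $u_{0,h}^\alpha\to u_0$ in $H^s(\mathbb R^d)$. We follow the classical Tikhonov consistency argument (Engl--Hanke--Neubauer type): an energy comparison against a discrete competitor, uniform boundedness, weak compactness, identification of the limit via injectivity of $\mathcal L$, and finally an upgrade to strong convergence. Throughout we use that on $V_{h,0}$ the norm $\|\cdot\|_{s,R,h}$ coincides with $a(v,v)+\|v\|_{L^2}^2$, which is an equivalent $H^s(\mathbb R^d)$ norm. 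For convenience we let $\|\cdot\|_{H^s}$ denote this norm, so that $\mathbf v^\top S\mathbf v=\|v_h\|_{H^s}^2$ exactly. Here $u_0=u-\bar u_f$ is the true interior part, which satisfies $\mathcal L u_0=\mu$ on $W$.

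\textbf{Step 1 (energy comparison).} Take as competitor $w_h:=\Pi_h u_0\in V_{h,0}$ from Lemma~\ref{lem:Pi_h_density}. Minimality gives
\[
\|L_h u_{0,h}^\alpha-\mu_W^\delta\|_Y^2+\alpha\|u_{0,h}^\alpha\|_{H^s}^2\le \|(\mathcal L w_h)_W-\mu_W^\delta\|_Y^2+\alpha\|w_h\|_{H^s}^2 .
\]
Since $\mu_W=(\mathcal L u_0)_W$, the residual on the right is bounded by
\[
\|(\mathcal L(w_h-u_0))_W\|_Y+\delta+\eta(h,R).
\]
By Lemma~\ref{lem:Y_to_L2_Hm} and Lemma~\ref{lem:pointwise}, $\|(\mathcal L z)_W\|_Y^2\le C(1+h^m)\|z\|_{L^2(\Omega)}^2$, so this first term is at most $C\epsilon_h$, where $\epsilon_h:=\|\Pi_hu_0-u_0\|_{H^s}\to0$. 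Dividing by $\alpha$ yields
\[
\|u_{0,h}^\alpha\|_{H^s}^2\le \|u_0\|_{H^s}^2+\frac{C(\epsilon_h+\delta+\eta)^2}{\alpha}.
\]
\textbf{Main obstacle.} The rule \eqref{eq:alpha-rule} controls $(\delta+\eta)^2/\alpha$ but not $\epsilon_h^2/\alpha$. I would handle this by assuming, or building into the coupling, that $\epsilon_h^2/\alpha\to0$. For example, one can note that $\alpha$ may be taken no faster than required, or read \eqref{eq:alpha-rule} as including the projection error in $\eta$. If $u_0$ has extra regularity, $\epsilon_h=O(h^{\sigma})$ and the condition becomes an explicit rate restriction. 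With this, $\limsup\|u_{0,h}^\alpha\|_{H^s}\le\|u_0\|_{H^s}$, and the residual satisfies $\|L_hu_{0,h}^\alpha-\mu^\delta_W\|_Y\to0$.

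\textbf{Step 2 (weak limit).} The family $u_{0,h}^\alpha$ is bounded in $\widetilde H^s(\Omega)$. Hence along any subsequence there is a further subsequence with $u_{0,h}^\alpha\rightharpoonup v$ weakly in $\widetilde H^s(\Omega)$, and, by compactness of $\widetilde H^s(\Omega)\hookrightarrow L^2(\Omega)$, strongly in $L^2(\Omega)$. By \eqref{eq:pointwise-bound}, $\mathcal L u_{0,h}^\alpha\to\mathcal L v$ uniformly on $W$. Lemma~\ref{lem:Y_to_L2_Hm}, applied to the difference $u_{0,h}^\alpha-u_0$, shows
\[
\|(\mathcal L(u_{0,h}^\alpha-u_0))_W\|_Y^2-\|\mathcal L(u_{0,h}^\alpha-u_0)\|_{L^2(W)}^2\to0 ,
\]
where the error is bounded by $Ch^m$ times a bounded quantity. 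Combined with the vanishing residual and $\|\mu^\delta_W-\mu_W\|_Y\to0$, this gives $\|\mathcal L(v-u_0)\|_{L^2(W)}=0$. Injectivity of $\mathcal L$ then forces $v=u_0$. Since every subsequence has a further subsequence with this same limit, the whole family converges weakly to $u_0$.

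\textbf{Step 3 (strong convergence).} Weak lower semicontinuity together with the limsup bound from Step 1 gives $\|u_{0,h}^\alpha\|_{H^s}\to\|u_0\|_{H^s}$. In a Hilbert space, weak convergence plus convergence of norms implies strong convergence, so $u_{0,h}^\alpha\to u_0$ in $H^s(\mathbb R^d)$. This $u_0$ is the minimum-norm (here the unique) solution of $\mathcal L v=\mu$, and it coincides with $\lim_{\alpha\to0}u_0^\alpha$ by the same argument applied to the continuous functional \eqref{eq:Jcont}. Adding the $u_{h,f}$ part from the opening split concludes $u_h^\alpha\to u$ in $H^s(\mathbb R^d)$.
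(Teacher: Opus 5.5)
You have located the real difficulty yourself, and with respect to the statement as posed it is a genuine gap. Step~1 requires $\|(\mathcal L(\Pi_hu_0-u_0))_W\|_Y^2/\alpha\to0$, and so do the boundedness used in Step~2 and the limsup used in Step~3. The rule \eqref{eq:alpha-rule} does not provide this. The quantity $\eta(h,R)=\eta_I(h)+\eta_t(h,R)$ only measures the preprocessing of the \emph{known} exterior datum. It says nothing about how well $V_{h,0}$ approximates the \emph{unknown} $u_0$. That approximation is limited by the $\operatorname{dist}(x,\partial\Omega)^s$ boundary behaviour of $u_0$ and may be much slower than $\eta$. If $\alpha$ decays faster than the square of that residual, nothing keeps $u_{0,h}^\alpha$ away from the discrete least-squares solution of $L_hv=\mu_W$ on $V_{h,0}$. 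The norm of that solution is controlled only by the rapidly decaying singular values of $L_h$. Your suggested remedies (``take $\alpha$ no faster than required'', ``read $\eta$ as including the projection error'') are changes of hypothesis, not arguments.

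Once the condition is added, your three steps are correct. The coupling you need is in fact weaker than $\epsilon_h^2/\alpha\to0$, because the residual is $o(\epsilon_h)$. Indeed, for $e=(I-\Pi_h)u_0$ one has $(\mathcal L e,y)_{L^2(W)}=(e,(I-\Pi_h)\mathcal L^*y)_{H^s}$, and $\|(I-\Pi_h)\mathcal L^*\|\to0$ by compactness. The sampling correction from Lemma~\ref{lem:Y_to_L2_Hm} is only $O(h^m)\epsilon_h^2$.

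The paper does not close this gap either; it sidesteps it by separating the limits. Its proof has three stages:
(i) For fixed $\alpha$, using the competitor $\Pi_hu_0^\alpha$ and a liminf/limsup argument, it shows that the noise-free discrete minimizer $\widehat u_{0,h}^\alpha$ converges to the continuous Tikhonov minimizer $u_0^\alpha$. The limit is identified by uniqueness of the minimizer of $J_\alpha$, not by injectivity.
(ii) Subtracting the normal equations gives $\|u_{0,h}^\alpha-\widehat u_{0,h}^\alpha\|_{s,R,h}\le(\delta+\eta)/\sqrt\alpha$.
(iii) It uses $u_0^\alpha\to u_0$ from continuous Tikhonov theory.

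This route needs no coupling at fixed $\alpha$ and produces exactly the three-term error split behind Theorem~\ref{thm:state_rate}. However, it establishes only the iterated limit. Along a path $\alpha=\alpha(h,\delta,R)\to0$ one needs $\rho_{h,R}(\alpha(h,\delta,R))\to0$. Even with exact $L^2(W)$ sampling, C\'ea's lemma in the energy norm $\|\mathcal Lv\|_{L^2(W)}^2+\alpha\|v\|_{H^s}^2$ only gives $\rho_{h,R}(\alpha)^2\le\|\mathcal L(I-\Pi_h)u_0^\alpha\|_{L^2(W)}^2/\alpha+\|(I-\Pi_h)u_0^\alpha\|_{H^s}^2$. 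This is the same $1/\alpha$-amplified approximation error you ran into. The paper's proof of this theorem does not establish that uniformity; it reappears only later, as the explicit assumption \eqref{ass:discre} in Theorem~\ref{thm:state_rate}.

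So your route buys something real: it proves the genuine joint limit in one pass, identifies the limit directly through injectivity, and makes the necessary coupling explicit. The right repair is to add $\|(\mathcal L(\Pi_hu_0-u_0))_W\|_Y^2/\alpha\to0$ (or your stronger $\epsilon_h^2/\alpha\to0$) to the parameter rule, rather than to look for an argument that avoids it.
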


\begin{proof}
Recall from \eqref{eq:J} that
\[
J_{h,\alpha}(\mathbf v)=\|B\mathbf v-\mu^\delta_W\|_Y^2+\alpha\,\mathbf v^\top S\mathbf v
=
 \|(\mathcal L v_h)_W-\mu^\delta_W\|_Y^2+\alpha\,\|v_h\|_{s,R,h}^2.
\]
 Since $\mathbf v_h^\alpha$ is the unique minimizer of $J_{h,\alpha}$, by minimality and testing with $\mathbf v=0$, we obtain
\[
J_{h,\alpha}(\mathbf v_h^\alpha)\le J_{h,\alpha}(0)=\|\mu^\delta_W\|_Y^2,
\]
hence
\begin{equation}\label{eq:apriori-u0h-clean}
\alpha\,\|u_{0,h}^\alpha\|_{s,R,h}^2
=
\alpha\,(\mathbf v_h^\alpha)^\top S\mathbf v_h^\alpha
\le \|\mu^\delta_W\|_Y^2.
\end{equation}

\noindent\textbf{Ideal noiseless case ($\delta =0$).}
Fix $\alpha>0$ and replace the discrete noisy measurement data with the exact sampled preprocessed data vector $\mu_W$ in \eqref{eq:J}. 
Set the competitor
\[
u_{0,h}^{\alpha,*}:=\Pi_h u_0^\alpha\in V_{h,0},
\qquad
\mathbf v_{h}^{\alpha,*}\in\mathbb R^{N_0}\text{ its coefficient vector (i.e. }u_{0,h}^{\alpha,*}=\mathcal R_h\mathbf v_{h}^{\alpha,*}\text{)}.
\]
By minimality of $\mathbf v_h^\alpha$,
\begin{equation}\label{eq:min-comp-clean}
J_{h,\alpha}(\mathbf v_h^\alpha)\le J_{h,\alpha}(\mathbf v_{h}^{\alpha,*}) =\|(\mathcal L\Pi_h u_0^\alpha)_W-\mu_W\|_{Y}^2+\alpha\|\Pi_h u_0^\alpha\|_{s,R,h}^2.
\end{equation}
By \eqref{eq:Y_to_L2_conv_Hm}, Lemma \ref{lem:pointwise}, and Lemma \ref{lem:Pi_h_density}, we have $\| (\mathcal L(\Pi_h u_0^\alpha-u_0^\alpha))_W\|_{Y} \to 0$ as $h\to0 $.
Hence, using the above convergence, we obtain
\begin{equation}\label{eq:limsup-clean}
\limsup_{h\to0,\,R\to\infty} J_{h,\alpha}(\mathbf v_h^\alpha)
\le
\lim_{h\to0,\,R\to\infty} J_{h,\alpha}(\mathbf v_{h}^{\alpha,*})
\leq
\|\mathcal L u_0^\alpha-\mu\|_{L^2(W)}^2+\alpha\|u_0^\alpha\|_{H^s(\mathbb R^d)}^2
= J_\alpha(u_0^\alpha),
\end{equation}
where $J_\alpha$ denotes the continuous functional in \eqref{eq:Jcont} and we have used Lemma \ref{lem:Y_to_L2_Hm}. The {\it a priori} estimate \eqref{eq:apriori-u0h-clean} implies that $\{u_{0,h}^\alpha\}$, extended by $0$ outside $\Omega_R$, is bounded in $H^s(\mathbb R^d)$.
Since each $u_{0,h}^\alpha \in V_{h,0}$ is supported in $\overline\Omega$, we may view $\{u_{0,h}^\alpha\}$ as a bounded
sequence in $\widetilde H^s(\Omega)$. Hence, along a subsequence,
\begin{equation}\label{eq:weak_subseq}
u_{0,h}^\alpha \rightharpoonup \bar u
\quad\text{weakly in }\widetilde H^s(\Omega),
\qquad
u_{0,h}^\alpha \to \bar u
\quad\text{strongly in }L^2(\Omega),
\end{equation}
where we used the compact embedding $\widetilde H^s(\Omega)\hookrightarrow\hookrightarrow L^2(\Omega)$.

Set $g_h:=\mathcal L u_{0,h}^\alpha-\mu$ and $\bar g:=\mathcal L\bar u-\mu$ on $W$.
By Lemma \ref{lem:pointwise}, the operator
$\mathcal L:L^2(\Omega)\to H^{m}(W)$ is bounded for any integer $m\ge 1$. In particular,
\[
\|g_h-\bar g\|_{H^{m}(W)}
=
\|\mathcal L(u_{0,h}^\alpha-\bar u)\|_{H^{m}(W)}
\le C\,\|u_{0,h}^\alpha-\bar u\|_{L^2(\Omega)} \to 0
\qquad (h\to0).
\]
Therefore, $g_h\to \bar g$ strongly in $H^{m}(W)$ and hence also in $L^2(W)$.
We apply Lemma~\ref{lem:Y_to_L2_Hm} to $g_h$, by noting that $g_h\in H^{m}(W)$ and $m>\dim(W)/2$, and obtain
\begin{equation}\label{eq:quad_pass}
Q_h^W(|g_h|^2) = \int_W |g_h|^2\,dx + \mathcal{O}(h^m)
\qquad (h\to0).
\end{equation}
Since $g_h\to \bar g$ in $L^2(W)$, we have $\int_W |g_h|^2\to \int_W |\bar g|^2$. Combining with
\eqref{eq:quad_pass} yields
\begin{equation}\label{eq:misfit_limit}
\|(\mathcal L u_{0,h}^\alpha)_W-\mu_W\|_Y^2
\longrightarrow
\|\mathcal L\bar u-\mu\|_{L^2(W)}^2.
\end{equation}
Next, by the weak lower semicontinuity of the $H^s(\mathbb R^d)$-norm and \eqref{eq:misfit_limit},
\begin{align}
J_\alpha(\bar u)
&=\|\mathcal L\bar u-\mu\|_{L^2(W)}^2+\alpha\|\bar u\|_{H^s(\mathbb R^d)}^2
\nonumber\\
&\le \liminf_{h\to0,\,R\to\infty}
\Big(
\|(\mathcal L u_{0,h}^\alpha)_W-\mu_W\|_Y^2+\alpha\|u_{0,h}^\alpha\|_{s,R,h}^2
\Big)
=
\liminf_{h\to0,\,R\to\infty} J_{h,\alpha}(\mathbf v_h^\alpha).
\label{eq:liminf_correct}
\end{align}
Together with the limsup inequality \eqref{eq:limsup-clean} we obtain
$J_\alpha(\bar u)\le J_\alpha(u_0^\alpha)$.
Since $u_0^\alpha$ is the unique minimizer of $J_\alpha$, it follows that $\bar u=u_0^\alpha$.
As the limit is unique, the whole sequence satisfies
\begin{equation}\label{eq:weak_fullseq}
u_{0,h}^\alpha \rightharpoonup u_0^\alpha \quad\text{in }\widetilde H^s(\Omega).
\end{equation}
From \eqref{eq:limsup-clean} and \eqref{eq:liminf_correct} with $\bar u=u_0^\alpha$, we get
\[
\lim_{h\to0,\,R\to\infty} J_{h,\alpha}(\mathbf v_h^\alpha)=J_\alpha(u_0^\alpha).
\]
Using \eqref{eq:misfit_limit} with $\bar u=u_0^\alpha$ yields
\[
\lim_{h\to0,\,R\to\infty}\|\mathcal L u_{0,h}^\alpha-\mu\|_{L^2(W)}^2
=
\|\mathcal L u_0^\alpha-\mu\|_{L^2(W)}^2,
\]
and the regularization terms converge as well
\[
\lim_{h\to0,\,R\to\infty}\|u_{0,h}^\alpha\|_{s,R,h}^2
=
\|u_0^\alpha\|_{H^s(\mathbb R^d)}^2.
\]
Since $u_{0,h}^\alpha\rightharpoonup u_0^\alpha$ in $H^s(\mathbb R^d)$ and the norms converge,
we conclude $u_{0,h}^\alpha\to u_0^\alpha$ strongly in $H^s(\mathbb R^d)$.
Consequently,
\[
u_h^\alpha=u_{h,f}+u_{0,h}^\alpha \to \bar u_f+u_0^\alpha=:u^\alpha
\qquad\text{in }H^s(\mathbb R^d)
\quad (h\to0,\ R\to\infty),
\]
for fixed $\alpha>0$.

\noindent\textbf{Practical noisy case.}
Return now to the discrete noisy measurement data $\mu_W^\delta$ for $\delta> 0$. 
Since $\mu_W\in Y$ is the ideal preprocessed sampled data vector, by the definitions of $\delta$ and $\eta(h,R)$ we have
\begin{equation}\label{eq:mu-pert-clean}
\|\mu^\delta_W-\mu_W\|_{Y} \le \delta+\eta(h,R).
\end{equation}
Let $\widehat{\mathbf v}_h^\alpha$ be the minimizer of $J_{h,\alpha}$ with $\mu_W$ in place of $\mu^\delta$,
and set $\widehat u_{0,h}^\alpha:=\mathcal R_h\widehat{\mathbf v}_h^\alpha$.
Subtracting the normal equation \eqref{eq:normal} and noise free analogue normal equation $(B^\top \mathbf W B + \alpha S)\, \widehat{\mathbf v}_h^\alpha = B^\top \mathbf W\,\mu_W $ gives
\[
K_\alpha(\mathbf v_h^\alpha-\widehat{\mathbf v}_h^\alpha)=B^\top \mathbf W(\mu^\delta_W-\mu_W),
\qquad K_\alpha:=B^\top \mathbf W B+\alpha S.
\]
Testing by $\mathbf e:=\mathbf v_h^\alpha-\widehat{\mathbf v}_h^\alpha$ yields the identity
\[
\underbrace{\|B\mathbf e\|_Y^2}_{=\mathbf e^\top B^\top \mathbf W B\,\mathbf e}
+\alpha\,\underbrace{\|\mathcal R_h\mathbf e\|_{s,R,h}^2}_{=\mathbf e^\top S\mathbf e}
=
\langle B\mathbf e,\ \mu^\delta_W-\mu_W\rangle_Y
\le \|B\mathbf e\|_Y\,\|\mu^\delta_W-\mu_W\|_Y.
\]
Hence $\|B\mathbf e\|_Y\le \|\mu^\delta_W-\mu_W\|_Y$, inserting this back gives
\[
\alpha\,\|\mathcal R_h\mathbf e\|_{s,R,h}^2
\le \|B\mathbf e\|_Y\,\|\mu^\delta_W-\mu_W\|_Y
\le \|\mu^\delta_W-\mu_W\|_Y^2.
\]
Therefore,
\begin{equation}\label{eq:stab-clean}
\|u_{0,h}^\alpha-\widehat u_{0,h}^\alpha\|_{s,R,h}
\le \frac{1}{\sqrt{\alpha}}\,\|\mu^\delta_W-\mu_W\|_{Y}
\le \frac{1}{\sqrt{\alpha}}\big(\delta+\eta(h,R)\big).
\end{equation}
For fixed $\alpha>0$, the previous part (ideal noiseless convergence) implies
$\widehat u_{0,h}^\alpha\to u_0^\alpha$ in $\widetilde H^s(\Omega)$ as $h\to0$ and $R\to\infty$.
Together with \eqref{eq:stab-clean} and the parameter rule \eqref{eq:alpha-rule}, this yields
\[
u_{0,h}^\alpha \to u_0^\alpha \quad\text{in }\widetilde H^s(\Omega)\qquad
(h\to0,\ R\to\infty,\ \delta\to0),
\]
and hence $u_h^\alpha=u_{h,f}+u_{0,h}^\alpha \to \bar u_f+u_0^\alpha=:u^\alpha$ in $H^s(\mathbb R^d)$.

It remains to let $\alpha\to0$. Since the preprocessed data are compatible in the exact model
($\mathcal L u_0=\mu$ with $u_0=u-\bar u_f$) and $\mathcal L$ is injective, standard Tikhonov theory implies
\[
u_0^\alpha \to u_0 \quad\text{in }\widetilde H^s(\Omega)\qquad (\alpha\to0),
\]
and therefore $u^\alpha=\bar u_f+u_0^\alpha\to \bar u_f+u_0=u$ in $H^s(\mathbb R^d)$.

Combining the limits yields $u_h^\alpha\to u$ in $H^s(\mathbb R^d)$ after $0$-extension outside $\Omega_R$.
Since the forward problem \eqref{eq:fwd} has a unique weak solution in $H^s(\mathbb R^d)$, the limit is
exactly that weak solution.
\end{proof}


Theorem~\ref{thm:state} gives the convergence of the reconstructed state, but it does not indicate how the error
depends on the noise level and the regularization parameter.  For choosing $\alpha$ in computations, it is useful
to keep an explicit error estimate.  The result below is standard for linear Tikhonov regularization (see, e.g., \cite[Ch.~4]{engl1996regularization} for more details).

By Lemma \ref{lem:pointwise} and \cite[Lemma 2.2]{ghosh2020uniqueness}, the operator
\(\mathcal L:\widetilde H^s(\Omega)\to L^2(W)\) is bounded, injective, and compact.
Since \(\widetilde H^s(\Omega)\) and \(L^2(W)\) are Hilbert spaces, \(\mathcal L\) admits a unique Hilbert adjoint
\(\mathcal L^*:L^2(W)\to \widetilde H^s(\Omega)\) satisfying
\[
(\mathcal L v,y)_{L^2(W)}=(v,\mathcal L^*y)_{H^s(\mathbb R^d)}
\qquad
\forall v\in \widetilde H^s(\Omega),\ \forall y\in L^2(W).
\]
Define
\[
A:=\mathcal L^*\mathcal L:\widetilde H^s(\Omega)\to \widetilde H^s(\Omega).
\]
Then \(A\) is bounded, self-adjoint, positive, and compact. Hence, the spectral theorem yields an orthonormal basis
\(\{e_j\}_{j=1}^\infty\subset \widetilde H^s(\Omega)\) and eigenvalues \(\lambda_j\ge 0\) such that
\(
Ae_j=\lambda_j e_j,~ j\in\mathbb N.
\)
Moreover \(\ker(A)=\{0\}\) as $\mathcal{L}$ is injective, thus zero is not an eigenvalue of \(A\), and therefore \(\lambda_j>0\) for all \(j\). For \(\nu>0\), we define
\[
A^\nu v:=\sum_{j=1}^\infty \lambda_j^\nu (v,e_j)_{H^s(\mathbb R^d)}\,e_j,
\qquad v\in \widetilde H^s(\Omega),
\]
so that \(A^\nu\) is again bounded, positive, and self-adjoint.

\begin{assumption}[Source condition for the exact interior state]\label{ass:source_state}
Let \(u_0\in \widetilde H^s(\Omega)\). We assume that there exist
\(\nu\in(0,1]\), \( w\in \widetilde H^s(\Omega)\), and
\(r_u\in \widetilde H^s(\Omega)\) such that
\begin{equation}\label{eq:source_state}
u_0=A^\nu w+r_u,
\qquad
\|r_u\|_{H^s(\mathbb R^d)} \leq \epsilon_u, \qquad \epsilon_u = \mathcal{O}((\delta+\eta(h,R))^{\frac{2\nu}{2\nu+1}}).
\end{equation}
\end{assumption}
Since \(\mathcal L\) is injective, \(\ker(A)=\{0\}\), hence
\(\ker(A^\nu)=\ker(A)=\{0\}\) for every \(\nu>0\). As \(A^\nu\) is self-adjoint, we have \(\overline{\operatorname{Range}(A^\nu)}=(\ker(A^\nu))^\perp=\widetilde H^s(\Omega)\).
Thus \(\operatorname{Range}(A^\nu)\) is dense in \(\widetilde H^s(\Omega)\), and for every \(u_0\in \widetilde H^s(\Omega)\) and every \(\epsilon_u>0\) one can choose \( w\in \widetilde H^s(\Omega)\) and \(r_u\in \widetilde H^s(\Omega)\) so that
\eqref{eq:source_state} holds.
In particular, if \(u_0\) is in range of \(A^\nu\),
then one may take \(\epsilon_u=0\), and \eqref{eq:source_state}
reduces to the exact source condition \(u_0=A^\nu w\).

Also note that, if \(u_0=\sum_{j=1}^\infty u_{0,j}e_j\) with \( u_{0,j}:=(u_0,e_j)_{H^s(\R^d)}\), then
\(u_0\in \operatorname{Range}(A^\nu)\) if and only if
\(\sum_{j=1}^\infty \lambda_j^{-2\nu}|u_{0,j}|^2<\infty\). In particular, every finite linear combination of eigenvectors of \(A\) belongs to \(\operatorname{Range}(A^\nu)\).

\begin{lemma}\label{lem:tikh_bias}
Let $u\in H^s(\R^d)$ be the unique solution of \eqref{eq:fwd} with $u = \bar u_f +u_0$ where $u_0\in \widetilde H^s(\Omega)$, $\mathcal L u_0=\mu$, and assume Assumption~\ref{ass:source_state} holds. Let $u_0^\alpha$ be the minimizer of the continuous functional
$J_\alpha$ in \eqref{eq:Jcont}. Then
\begin{equation}\label{eq:bias_bound}
\|u_0^\alpha-u_0\|_{H^s(\mathbb R^d)}
\le \alpha^\nu\| w\|_{H^s(\mathbb R^d)}+\epsilon_u.
\end{equation}
In particular, if \(u_0\in \operatorname{Range}(A^\nu)\), then \(r_u=0\) and
\[
\|u_0^\alpha-u_0\|_{H^s(\mathbb R^d)}
\le \alpha^\nu\| w\|_{H^s(\mathbb R^d)}.
\]
\end{lemma}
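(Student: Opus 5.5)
The plan is to use the spectral representation of $A=\mathcal L^*\mathcal L$ introduced above and write the Tikhonov error explicitly as a filter applied to $u_0$.

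\textbf{Step 1: normal equation.} The functional $J_\alpha$ in \eqref{eq:Jcont} is a strictly convex quadratic on the Hilbert space $\widetilde H^s(\Omega)$ with the $H^s(\mathbb R^d)$ inner product. Its first variation at the minimizer $u_0^\alpha$ gives, for all $v\in\widetilde H^s(\Omega)$,
\[
(\mathcal L u_0^\alpha-\mu,\mathcal L v)_{L^2(W)}+\alpha\,(u_0^\alpha,v)_{H^s(\mathbb R^d)}=0 .
\]
By the definition of the Hilbert adjoint $\mathcal L^*$, this is equivalent to
\[
(A+\alpha I)u_0^\alpha=\mathcal L^*\mu .
\]
Since $\mu=\mathcal L u_0$, we have $\mathcal L^*\mu=Au_0$. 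Because $A\ge0$, the operator $A+\alpha I$ is invertible. Hence
\[
u_0^\alpha-u_0=(A+\alpha I)^{-1}Au_0-u_0=-\alpha(A+\alpha I)^{-1}u_0 .
\]

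\textbf{Step 2: split and estimate.} Insert $u_0=A^\nu w+r_u$ from Assumption~\ref{ass:source_state}. Using the orthonormal eigenbasis $\{e_j\}$ and Parseval in $\widetilde H^s(\Omega)$, the error $u_0^\alpha-u_0$ is bounded by two terms:
\[
\sup_{j}\frac{\alpha\lambda_j^\nu}{\lambda_j+\alpha}\,\|w\|_{H^s(\mathbb R^d)}
\;+\;
\sup_j\frac{\alpha}{\lambda_j+\alpha}\,\|r_u\|_{H^s(\mathbb R^d)} .
\]
The second factor is at most $1$, so the second term is at most $\epsilon_u$.

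\textbf{Step 3: the filter bound.} The only point needing care is the scalar inequality
\[
\frac{\alpha\lambda^\nu}{\lambda+\alpha}\le\alpha^\nu\qquad\text{for all }\lambda>0,\ \nu\in(0,1].
\]
It is equivalent to $\lambda^\nu\alpha^{1-\nu}\le\lambda+\alpha$. This follows from the weighted AM--GM inequality,
\[
\lambda^\nu\alpha^{1-\nu}\le\nu\lambda+(1-\nu)\alpha\le\lambda+\alpha .
\]
This is exactly where the restriction $\nu\le1$, the usual qualification of Tikhonov regularization, is used. Combining Steps 2 and 3 gives \eqref{eq:bias_bound}.

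\textbf{The case $r_u=0$.} If $u_0\in\operatorname{Range}(A^\nu)$, one takes $r_u=0$ and $\epsilon_u=0$, and the second estimate follows immediately. I expect no real obstacle. The one point to check is consistency of norms: the adjoint $\mathcal L^*$, the spectral calculus of $A$, and the penalty in $J_\alpha$ must all use the same $H^s(\mathbb R^d)$ inner product. They do by the definitions above.
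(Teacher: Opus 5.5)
Your proposal is correct and follows the paper's proof essentially step by step. You use the Euler--Lagrange equation and the adjoint to get $(A+\alpha I)u_0^\alpha=Au_0$, write $u_0-u_0^\alpha=\alpha(A+\alpha I)^{-1}(A^\nu w+r_u)$, and bound the two spectral filters in the eigenbasis $\{e_j\}$; your weighted AM--GM derivation of $\alpha\lambda^\nu/(\lambda+\alpha)\le\alpha^\nu$ is just the paper's inequality $t^\nu\le 1+t$ rescaled with $t=\lambda/\alpha$.
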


\begin{proof}
For every $\varphi\in \widetilde H^s(\Omega)$, the minimizer $u_0^\alpha$ satisfies the Euler--Lagrange equation
\[
(\mathcal Lu_0^\alpha-\mu,\mathcal L\varphi)_{L^2(W)} + \alpha (u_0^\alpha,\varphi)_{ H^s(\R^d)} = 0.
\]
By the definition of the adjoint, this is equivalent to
\[
(\mathcal L^*(\mathcal Lu_0^\alpha-\mu)+\alpha u_0^\alpha,\varphi)_{ H^s(\R^d)} = 0
\qquad \forall \varphi\in \widetilde H^s(\Omega).
\]
Hence
\begin{equation}\label{eq:matrixA_u0}
(\mathcal L^*\mathcal L+\alpha I)u_0^\alpha = \mathcal L^*\mu.
\end{equation}
Since \(\mu=\mathcal Lu_0\), we obtain
\begin{equation*}
(A+\alpha I)u_0^\alpha=Au_0.
\end{equation*}
Moreover, for every \(v\in \widetilde H^s(\Omega)\),
\[
((A+\alpha I)v,v)_{H^s(\mathbb R^d)}
=
\|\mathcal Lv\|_{L^2(W)}^2+\alpha\|v\|_{H^s(\mathbb R^d)}^2
\ge \alpha\|v\|_{H^s(\mathbb R^d)}^2,
\]
so \(A+\alpha I\) is invertible and bounded.
Using Assumption \ref{ass:source_state},
\[
u_0-u_0^\alpha = \alpha(A+\alpha I)^{-1}u_0 = \alpha(A+\alpha I)^{-1}A^\nu w + \alpha(A+\alpha I)^{-1}r_u.
\]
Writing \( w=\sum_j  w_j e_j\) and \(r_u=\sum_j r_{u,j}e_j\) with \( w_{j}:=(w,e_j)_{H^s(\R^d)}\) and \( r_{u,j}:=(r_u,e_j)_{H^s(\R^d)}\), we have
\[
\alpha(A+\alpha I)^{-1}A^\nu w = \sum_j \frac{\alpha\lambda_j^\nu}{\lambda_j+\alpha}\, w_j e_j,
\qquad
\alpha(A+\alpha I)^{-1}r_u =
\sum_j \frac{\alpha}{\lambda_j+\alpha}\,r_{u,j} e_j.
\]
Therefore,
\[
\|\alpha(A+\alpha I)^{-1}A^\nu w\|_{H^s(\mathbb R^d)}^2
=
\sum_j \Bigl(\frac{\alpha\lambda_j^\nu}{\lambda_j+\alpha}\Bigr)^2 | w_j|^2
\le \alpha^{2\nu}\| w\|_{H^s(\mathbb R^d)}^2,
\]
since \(t^\nu\le 1+t\) for all \(t\ge 0\) when \(0<\nu\le 1\). Also,
\[
\|\alpha(A+\alpha I)^{-1}r_u\|_{H^s(\mathbb R^d)}^2
=
\sum_j \Bigl(\frac{\alpha}{\lambda_j+\alpha}\Bigr)^2 |r_{u,j}|^2
\le \|r_u\|_{H^s(\mathbb R^d)}^2
\le \epsilon_u^2.
\]
Combining the two bounds yields \eqref{eq:bias_bound}. The final statement is the special case \(r_u=0\).
\end{proof}

\begin{theorem}[A priori state error bound and {\it a priori} parameter choice]\label{thm:state_rate}
Assume \eqref{eq:f-ass}--\eqref{eq:W-sep} holds. Let the noisy measurement data vector $\mu^\delta_W$ be the corresponding preprocessed data vector defined in \eqref{eq:mu-vector}, and  the deterministic consistency error is $\eta(h,R)$. Let $u\in H^s(\R^d)$ be the unique solution of \eqref{eq:fwd} and $u = u_0+\bar u_f$, where $u_0 \in \widetilde H^s(\Omega)$ and  $\mathcal L u_0=\mu$, and assume Assumption~\ref{ass:source_state} holds.
Let $\widehat u_{0,h}^\alpha=\mathcal R_h\widehat{\mathbf v}_h^\alpha$, where $\widehat{\mathbf v}_h^\alpha$ is the minimizer of $J_{h,\alpha}$ with $\mu_W$ in place of $\mu^\delta_W$, and define
\[
\rho_{h,R}(\alpha):=\|\widehat u_{0,h}^\alpha-u_0^\alpha\|_{H^s(\R^d)}.
\]
After extending \(u_{0,h}^\alpha\), \(\widehat u_{0,h}^\alpha\), and \(u_{h,f}\) by zero outside \(\Omega_R\), and setting \(u_h^\alpha=u_{0,h}^\alpha+u_{h,f}\), there exists \(C>0\), independent of \(h\), \(R\), \(\delta\), and
\(\alpha\), such that
\begin{equation}\label{eq:state_error_est}
\|u_h^\alpha-u\|_{H^s(\mathbb R^d)}
\le C\Big(
\frac{\delta+\eta(h,R)}{\sqrt{\alpha}}
+\rho_{h,R}(\alpha)
+\|u_{h,f}-\bar u_f\|_{H^s(\mathbb R^d)}
+\alpha^\nu\| w\|_{H^s(\mathbb R^d)}
+\epsilon_u
\Big).
\end{equation}
In particular, if \(u_0\in \operatorname{Range}(A^\nu)\), then one may take \(\epsilon_u=0\). Moreover, for each fixed \(\alpha>0\), we have \(\rho_{h,R}(\alpha)\to 0\) as
\(h\to 0\) and \(R\to\infty\). If, in addition, the approximation quantities satisfy
\begin{equation}\label{ass:discre}
\rho_{h,R}(\alpha)+\|u_{h,f}-\bar u_f\|_{H^s(\mathbb R^d)}+\epsilon_u
=
\mathcal O\Big((\delta+\eta(h,R))^{\frac{2\nu}{2\nu+1}}\Big),
\end{equation}
and assume \(\| w\|_{H^s(\mathbb R^d)}\) remains bounded independently of \(h\), \(R\),
\(\delta\), and \(\alpha\), then the {\it a priori} choice
\begin{equation}\label{eq:alpha_apriori_rate}
c_1 \bigl(\delta+\eta(h,R)\bigr)^{\frac{2}{2\nu+1}}
\le \alpha \le
c_2 \bigl(\delta+\eta(h,R)\bigr)^{\frac{2}{2\nu+1}}
\end{equation}
for some constants \(c_1,c_2>0\), independent of \(h\), \(R\), and \(\delta\), yields
\begin{equation}\label{est:rate}
\|u_h^\alpha-u\|_{H^s(\mathbb R^d)}
=
\mathcal O \Big((\delta+\eta(h,R))^{\frac{2\nu}{2\nu+1}}\Big)
\qquad\text{as } \delta+\eta(h,R)\to 0.
\end{equation}
\end{theorem}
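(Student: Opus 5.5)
The plan is a triangle-inequality splitting of the total error into a noise/consistency part, a discretization part at fixed $\alpha$, an exterior-interpolation part, and the continuous Tikhonov bias. Writing $u=\bar u_f+u_0$ and $u_h^\alpha=u_{h,f}+u_{0,h}^\alpha$, I will decompose
\[
u_h^\alpha-u=(u_{0,h}^\alpha-\widehat u_{0,h}^\alpha)+(\widehat u_{0,h}^\alpha-u_0^\alpha)+(u_0^\alpha-u_0)+(u_{h,f}-\bar u_f),
\]
and bound each term in $H^s(\mathbb R^d)$.

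\textbf{First term.} I will reuse the perturbation argument from the proof of Theorem~\ref{thm:state}, namely \eqref{eq:stab-clean}. That estimate gives
\[
\|u_{0,h}^\alpha-\widehat u_{0,h}^\alpha\|_{s,R,h}\le (\delta+\eta(h,R))/\sqrt\alpha.
\]
Since the difference lies in $V_{h,0}$, the norm equivalence $\|v_h\|_{H^s(\mathbb R^d)}^2\le C\|v_h\|_{s,R,h}^2$ (stated before Lemma~\ref{lem:tail-1d}, using $a_R=a$ on $V_{h,0}$) converts this into an $H^s(\mathbb R^d)$ bound with a constant $C$ depending only on $d,s$.

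\textbf{Second, third and fourth terms.} The second term is $\rho_{h,R}(\alpha)$ by definition. The third is controlled by Lemma~\ref{lem:tikh_bias}, giving $\alpha^\nu\|w\|+\epsilon_u$. The fourth is kept as it is. Summing the four bounds yields \eqref{eq:state_error_est}, and setting $\epsilon_u=0$ when $u_0\in\operatorname{Range}(A^\nu)$ is immediate.

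\textbf{Convergence of $\rho_{h,R}(\alpha)$.} The claim $\rho_{h,R}(\alpha)\to0$ for fixed $\alpha$ is exactly the ideal noiseless part of the proof of Theorem~\ref{thm:state}, which showed $\widehat u_{0,h}^\alpha\to u_0^\alpha$ strongly in $H^s(\mathbb R^d)$ as $h\to0$, $R\to\infty$. I will simply cite that step.

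\textbf{Rate under the a priori rule.} Set $\epsilon:=\delta+\eta(h,R)$. Under \eqref{eq:alpha_apriori_rate}, $\alpha\simeq\epsilon^{2/(2\nu+1)}$, so
\[
\epsilon/\sqrt\alpha\le c_1^{-1/2}\epsilon^{1-1/(2\nu+1)}=c_1^{-1/2}\epsilon^{2\nu/(2\nu+1)},
\qquad
\alpha^\nu\le c_2^\nu\epsilon^{2\nu/(2\nu+1)}.
\]
By \eqref{ass:discre}, the remaining terms $\rho_{h,R}(\alpha)$, $\|u_{h,f}-\bar u_f\|_{H^s(\mathbb R^d)}$ and $\epsilon_u$ are already $\mathcal O(\epsilon^{2\nu/(2\nu+1)})$. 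Together with $\|w\|_{H^s(\mathbb R^d)}$ bounded, this gives \eqref{est:rate}.

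\textbf{Main obstacle.} The only step needing care is the constant $C$: it must be independent of $h,R,\delta,\alpha$. This means checking that the equivalence between $\|\cdot\|_{s,R,h}$ and $\|\cdot\|_{H^s(\mathbb R^d)}$ on $V_{h,0}$ is uniform. It is, because $a_R(v_h,v_h)=a(v_h,v_h)$ holds exactly for $v_h$ supported in $\overline\Omega$. A secondary point is that the stability estimate \eqref{eq:stab-clean} is purely algebraic, so it holds for every $\alpha>0$ without any coupling to $h$. Everything else is bookkeeping.
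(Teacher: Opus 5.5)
Your proposal is correct and matches the paper's proof step for step. You use the same splitting $u_h^\alpha-u=(u_{0,h}^\alpha-\widehat u_{0,h}^\alpha)+(\widehat u_{0,h}^\alpha-u_0^\alpha)+(u_0^\alpha-u_0)+(u_{h,f}-\bar u_f)$, with \eqref{eq:stab-clean} plus the uniform equivalence $\|v_h\|_{H^s(\mathbb R^d)}\le C\|v_h\|_{s,R,h}$ on $V_{h,0}$ for the first term, Lemma~\ref{lem:tikh_bias} for the bias, the ideal noiseless part of Theorem~\ref{thm:state} for $\rho_{h,R}(\alpha)\to0$, and the same exponent bookkeeping $(\delta+\eta(h,R))/\sqrt{\alpha}\le c_1^{-1/2}(\delta+\eta(h,R))^{\frac{2\nu}{2\nu+1}}$, $\alpha^\nu\le c_2^\nu(\delta+\eta(h,R))^{\frac{2\nu}{2\nu+1}}$ under \eqref{eq:alpha_apriori_rate}.
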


\begin{proof}
Split the state error as
\[
u_h^\alpha-u = (u_{0,h}^\alpha-u_0) + (u_{h,f}-\bar u_f).
\]
We write the first term of the right hand side as
\[
u_{0,h}^\alpha-u_0
=
(u_{0,h}^\alpha-\widehat u_{0,h}^\alpha)
+(\widehat u_{0,h}^\alpha-u_0^\alpha)
+(u_0^\alpha-u_0).
\]
The first term is controlled by the discrete stability estimate \eqref{eq:stab-clean}:
\[
\|u_{0,h}^\alpha-\widehat u_{0,h}^\alpha\|_{s,R,h}
\le \frac{\delta+\eta(h,R)}{\sqrt{\alpha}}.
\]
Since $\|v_h\|_{H^s(\R^d)}\le C\|v_h\|_{s,R,h}$ for all $v_h\in V_{h,0}$, we have
\[
\|u_{0,h}^\alpha-\widehat u_{0,h}^\alpha\|_{H^s(\R^d)}
\le C\,\frac{\delta+\eta(h,R)}{\sqrt{\alpha}}.
\]
By definition, $\|\widehat u_{0,h}^\alpha-u_0^\alpha\|_{H^s(\R^d)}=\rho_{h,R}(\alpha)$, and by Lemma \ref{lem:tikh_bias},
\[
\|u_0^\alpha-u_0\|_{H^s(\mathbb R^d)}
\le \alpha^\nu\|w\|_{H^s(\mathbb R^d)}+\epsilon_u.
\]
Combining these three bounds and adding $\|u_{h,f}-\bar u_f\|_{H^s(\mathbb R^d)}$ yields \eqref{eq:state_error_est}. The convergence $\rho_{h,R}(\alpha)\to 0$ for fixed $\alpha$ follows from the ideal part of Theorem~\ref{thm:state}.

 Under \eqref{ass:discre},
\eqref{eq:state_error_est} becomes
\begin{equation}\label{est:rate_aux}
\|u_h^\alpha-u\|_{H^s(\mathbb R^d)}
\le C\Big(\frac{\delta+\eta(h,R)}{\sqrt{\alpha}}+\alpha^\nu+(\delta+\eta(h,R))^{\frac{2\nu}{2\nu+1}}\Big),
\end{equation}
where the constant \(C\) also absorbs the bound on \(\|w\|_{H^s(\mathbb R^d)}\). Now, if the {\it a priori} choice \eqref{eq:alpha_apriori_rate} holds, then
\[
\frac{\delta+\eta(h,R)}{\sqrt{\alpha}}
\le c_1^{-1/2}\,(\delta+\eta(h,R))^{\frac{2\nu}{2\nu+1}},
\qquad
\alpha^\nu
\le c_2^\nu\,(\delta+\eta(h,R))^{\frac{2\nu}{2\nu+1}}.
\]
Substituting these bounds into the estimate \eqref{est:rate_aux} yields \eqref{est:rate}.
\end{proof}


\subsection{Reconstruction of {\it q}}\label{subsec:qrec}
\subsubsection{Computing $(-\Delta)^s u_h^\alpha$ in $\Omega$}\label{subsec:wh}
To recover $q$ via \eqref{eq:fwd}, we need to first recover $(-\Delta)^s u^\alpha$ inside $\Omega$. Discretely, we compute a consistent FE representation. Recall that the reconstructed state is decomposed as
\[
u_h^\alpha = u_{0,h}^\alpha + u_{h,f},
\]
where $u_{0,h}^\alpha\in V_{h,0}$ is the unknown interior correction and $u_{h,f}$ is the known exterior extension.
The auxiliary variable $w_h^\alpha\in V_{h,0}$ is defined by
\begin{equation}\label{eq:w-def}
(w_h^\alpha,\varphi_h)_{L^2(\Omega)} =  a_R(u_h^\alpha,\varphi_h) = a_R(u_{0,h}^\alpha,\varphi_h)+a_R(u_{h,f},\varphi_h),
\end{equation}
for all $\varphi_h\in V_{h,0}$. Let $\{\phi_i\}_{i=1}^{N_0}$ be a basis of $V_{h,0}$, and expand
\[
w_h^\alpha = \sum_{j=1}^{N_0} w_j^\alpha\,\phi_j,
\qquad
u_{0,h}^\alpha = \sum_{j=1}^{N_0} u_j^\alpha\,\phi_j.
\]
Testing with $\varphi_h=\phi_i$ yields the linear system
\[
M_0\,\mathbf w^\alpha = A_0\,\mathbf u^\alpha + \mathbf b^{(\mathrm{ext})},
\]
where the entries are given by
\[
(M_0)_{ij} := (\phi_j,\phi_i)_{L^2(\Omega)},\qquad
(A_0)_{ij} := a_R(\phi_j,\phi_i),\qquad
b^{(\mathrm{ext})}_i := a_R( u_{h,f},\phi_i).
\]

Formally, from \eqref{eq:fwd}, in $\Omega$, the potential $q$ satisfies
\begin{equation}\label{eq:q-formal}
q = -\frac{(-\Delta)^s u}{u}.
\end{equation}
However, direct pointwise division is numerically unstable if $u$ becomes small. We therefore recover
$q$ through a regularized weighted least-squares problem that remains well-posed for all reconstructed
states. Since the nondegeneracy assumption is imposed only on an interior subdomain
$\Omega'\subset\subset\Omega$, we reconstruct the coefficient on $\Omega'$ and extend it by zero to
$\Omega$ only for visualization in Section~\ref{sec:numerics}.

Let $Q_h\subset L^\infty(\Omega')\cap L^2(\Omega')$ be a finite element space for the coefficient
(e.g.\ elementwise polynomials, globally discontinuous), with basis $\{\psi_j\}_{j=1}^{N_q}$.
We seek $q_h^\alpha\in Q_h$ in the form
\[
q_h^\alpha=\sum_{j=1}^{N_q} q_j\psi_j,
\qquad \mathbf q=(q_1,\dots,q_{N_q})^\top\in\mathbb R^{N_q}.
\]

Let $u_h^\alpha = u_{0,h}^\alpha + u_{h,f}\in V_h\subset L^\infty(\Omega_R)$ be the reconstructed state,
and let $w_h^\alpha$ denote the computable representation of $(-\Delta)^s u_h^\alpha$ from
\eqref{eq:w-def}, restricted to $\Omega'$. For fixed $\alpha_q>0$, we define $q_h^\alpha$ by
\begin{equation}\label{eq:q-min}
q_h^\alpha
:= \operatorname*{argmin}_{q_h\in Q_h}
\Big(
\|w_h^\alpha+q_hu_h^\alpha\|_{L^2(\Omega')}^2
+\alpha_q\|q_h\|_{L^2(\Omega')}^2
\Big).
\end{equation}

\begin{theorem}[Existence and uniqueness of the discrete coefficient reconstruction]\label{thm:qmin}
Let $u_h^\alpha\in V_h$ and $w_h^\alpha\in V_{h,0}$ be given (recovered) quantities.
Then, for every fixed $\alpha_q>0$, the minimization problem \eqref{eq:q-min} has a unique minimizer
$q_h^\alpha\in Q_h$. Moreover, $q_h^\alpha$ is characterized by the variational normal equations
\begin{equation}\label{eq:q-var-disc-alpha-final}
\int_{\Omega'} \big((u_h^\alpha)^2+\alpha_q\big)\,q_h^\alpha\,\phi_h\,dx
= -\int_{\Omega'} w_h^\alpha u_h^\alpha\,\phi_h\,dx
\qquad \forall \phi_h\in Q_h,
\end{equation}
and the coefficient vector $\mathbf q$ is the unique solution of the SPD linear system
\begin{subequations}
\begin{equation}\label{eq:q-system}
G_\alpha \mathbf q = \mathbf b,
\end{equation}
where
\begin{equation}\label{eq:Geps}
(G_\alpha)_{ij}:= \int_{\Omega'} \big((u_h^\alpha)^2+\alpha_q\big)\psi_i\psi_j\,dx, \qquad
b_i := -\int_{\Omega'} w_h^\alpha u_h^\alpha \psi_i\,dx.
\end{equation}
\end{subequations}
\end{theorem}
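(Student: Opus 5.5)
The argument mirrors the proof of Theorem~\ref{thm:min}: we treat \eqref{eq:q-min} as a finite-dimensional quadratic minimization in the coefficient vector $\mathbf q\in\mathbb R^{N_q}$. Since $Q_h$ is finite-dimensional and all integrands are products of bounded functions on $\Omega'$, the functional
\[
F(q_h):=\|w_h^\alpha+q_hu_h^\alpha\|_{L^2(\Omega')}^2+\alpha_q\|q_h\|_{L^2(\Omega')}^2
\]
is well defined. Here $u_h^\alpha\in V_h\subset L^\infty(\Omega_R)$ is piecewise polynomial, and $w_h^\alpha\in V_{h,0}$.

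First we expand $F$ with $q_h=\sum_j q_j\psi_j$. This gives
\[
F(q_h)=\mathbf q^\top G_\alpha\mathbf q-2\mathbf b^\top\mathbf q+\|w_h^\alpha\|_{L^2(\Omega')}^2,
\]
with $G_\alpha$ and $\mathbf b$ exactly as in \eqref{eq:Geps}. The quadratic term collects $\int_{\Omega'}(u_h^\alpha)^2q_h^2$ and $\alpha_q\int_{\Omega'}q_h^2$, while the linear term comes from the cross term $2\int_{\Omega'}w_h^\alpha u_h^\alpha q_h$. The matrix $G_\alpha$ is clearly symmetric.

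Next we show $G_\alpha$ is positive definite. For $\mathbf q\neq0$,
\[
\mathbf q^\top G_\alpha\mathbf q=\int_{\Omega'}\big((u_h^\alpha)^2+\alpha_q\big)q_h^2\,dx\ \ge\ \alpha_q\|q_h\|_{L^2(\Omega')}^2>0,
\]
because $\{\psi_j\}$ is a basis of $Q_h\subset L^2(\Omega')$, so $q_h\neq0$ in $L^2(\Omega')$. This is the only step needing care. Positivity must not rely on $u_h^\alpha$, which may vanish on sets of positive measure; the regularization $\alpha_q>0$ alone guarantees coercivity. Linear independence of the basis in $L^2(\Omega')$ is what makes $\|q_h\|_{L^2(\Omega')}>0$ whenever $\mathbf q\neq0$; if the $\psi_j$ are elementwise polynomials supported on elements meeting $\Omega'$, this holds by construction.

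Then $F$ is a strictly convex quadratic with Hessian $2G_\alpha\succ0$. Its gradient is $2(G_\alpha\mathbf q-\mathbf b)$, so the unique critical point is the unique solution of \eqref{eq:q-system} and is the unique global minimizer. Finally, the variational form \eqref{eq:q-var-disc-alpha-final} is equivalent to \eqref{eq:q-system}. One sees this either by testing with $\phi_h=\psi_i$ and using linearity in $\phi_h$, or directly by computing the first variation $\frac{d}{dt}F(q_h^\alpha+t\phi_h)|_{t=0}=0$ for all $\phi_h\in Q_h$.
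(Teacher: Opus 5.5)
Your proposal is correct and is essentially the paper's argument: both rest on the bound $\mathbf z^\top G_\alpha\mathbf z\ge\alpha_q\|z_h\|_{L^2(\Omega')}^2$, which uses only $\alpha_q>0$ and no nondegeneracy of $u_h^\alpha$. The difference is only in ordering. You expand in coefficients first, as in Theorem~\ref{thm:min}, so the SPD Hessian gives existence and uniqueness at once. The paper instead argues via coercivity, a minimizing sequence and strict convexity of $F$ on $Q_h$, then derives \eqref{eq:q-var-disc-alpha-final} by the first variation, and only at the end tests with $\psi_i$ and checks that $G_\alpha$ is SPD.
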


\begin{proof}
Define
\[
F(q_h):=\|w_h^\alpha+q_h\,u_h^\alpha\|_{L^2(\Omega')}^2+\alpha_q\|q_h\|_{L^2(\Omega')}^2,
\qquad q_h\in Q_h.
\]
Since $Q_h$ is finite-dimensional and $u_h^\alpha\in V_h$ and $w_h^\alpha\in V_{h,0}$ are fixed, the mapping $q_h\mapsto F(q_h)$
is a continuous quadratic functional on $Q_h$.
Coercivity follows from the regularization term
\begin{equation}\label{eqn:coercive}
F(q_h)\ \ge\ \alpha_q\|q_h\|_{L^2(\Omega')}^2
\qquad\forall q_h\in Q_h.
\end{equation}
Hence $F(q_h)\to\infty$ as $\|q_h\|_{L^2(\Omega')}\to\infty$ in $Q_h$.
For any $q_h,r_h\in Q_h$,
\[
F(q_h)-F(r_h)-\langle F'(r_h),q_h-r_h\rangle
= \| (q_h-r_h)u_h^\alpha\|_{L^2(\Omega')}^2+\alpha_q\|q_h-r_h\|_{L^2(\Omega')}^2
\ \ge\ \alpha_q\|q_h-r_h\|_{L^2(\Omega')}^2,
\]
which is strictly positive whenever $q_h\neq r_h$ since $\alpha_q>0$. Thus $F$ is strictly convex.

Because $Q_h$ is finite-dimensional, coercivity \eqref{eqn:coercive} implies that any minimizing sequence for $F$
is bounded in $L^2(\Omega')$, hence has a convergent subsequence in $Q_h$.
By continuity of $F$, the limit of this subsequence is a minimizer.
Uniqueness follows from strict convexity: if two minimizers existed, their midpoint would yield a strictly smaller value.

Let $q_h^{\alpha}$ be the unique minimizer.
For any direction $\phi_h\in Q_h$ and $t\in\mathbb{R}$ define $\Phi(t):=F(q_h^{\alpha}+t\phi_h)$.
Since $F$ is quadratic, $\Phi$ is a differentiable convex function and $t=0$ is its minimizer, so $\Phi'(0)=0$.
Compute
\[
\Phi'(0)
= 2\int_{\Omega'} (w_h^\alpha+q_h^{\alpha}u_h^\alpha)\, (u_h^\alpha\phi_h)\,dx
+2\alpha_q\int_{\Omega'} q_h^{\alpha}\phi_h\,dx.
\]
Thus, this gives
\[
\int_{\Omega'} (u_h^\alpha)^2\, q_h^{\alpha}\phi_h\,dx
+\alpha_q\int_{\Omega'} q_h^{\alpha}\phi_h\,dx
= -\int_{\Omega'} w_h^\alpha u_h^\alpha \phi_h\,dx,
\]
which is exactly \eqref{eq:q-var-disc-alpha-final}.

Write $q_h^{\alpha}=\sum_{j=1}^{N_q} q_j\psi_j$ and choose $\phi_h=\psi_i$ in \eqref{eq:q-var-disc-alpha-final}.
This yields system \eqref{eq:q-system}.
To prove $G_\alpha$ is SPD, let $\mathbf z =(z_1,\dots,z_{N_q})^\top\in\mathbb{R}^{N_q}\setminus\{0\}$ and set $z_h=\sum_{j=1}^{N_q} z_j\psi_j\in Q_h$.
Then
\[
\mathbf z^\top G_\alpha \mathbf z
= \int_{\Omega'} (u_h^\alpha)^2 z_h^2\,dx + \alpha_q\int_{\Omega'} z_h^2\,dx
\ge \alpha_q\|z_h\|_{L^2(\Omega')}^2>0,
\]
since $\alpha_q>0$ and $z_h\not\equiv 0$ whenever $z\neq 0$. Hence $G_\alpha$ is SPD.
\end{proof}
The matrix $G_\alpha$ is sparse (the same stencil as the $Q_h$ mass matrix) and SPD. Thus, it can be solved efficiently by the conjugate gradient method with standard preconditioning. The assembly of $G_\alpha$ and $b$ requires only local
element-wise quadrature of the integrals in \eqref{eq:Geps}.


\subsection{Convergence and error analysis for the reconstructed potential}\label{subsec:qconv_alpha_final}
In this subsection, we analyze the coefficient reconstruction on an interior subdomain
$\Omega'\subset\subset\Omega$.

In addition to \eqref{eq:f-ass}, we assume that
\[
q\in L^\infty(\Omega),\qquad \mathrm{supp}(q)\subset \Omega'\subset\subset\Omega,
\qquad f\in \widetilde H^{s+\epsilon}(W)\setminus\{0\},\quad s\in(0,1),\ \epsilon>0.
\]
 Let us define \eqref{eq:Jcont} for noisy measurement again, i.e., for $\alpha>0$, let $u_0^{\alpha,\delta}\in \widetilde H^s(\Omega)$ be the unique minimizer of
\begin{equation}\label{eq:Jcont_noise}
J_\alpha^\delta(v)
:=\|\mathcal L v-\mu^\delta\|_{L^2(W)}^2+\alpha\|v\|_{H^s(\mathbb R^d)}^2,
\qquad v\in \widetilde H^s(\Omega),
\end{equation}
and set
\(
u^{\alpha,\delta}:=\bar u_f+u_0^{\alpha,\delta}.
\)
Assume that
\begin{equation}\label{eq:w_exact}
w^{\alpha,\delta}:=((-\Delta)^s u^{\alpha,\delta})|_{\Omega'}\in L^2(\Omega'),
\qquad
\|w^{\alpha,\delta}\|_{L^2(\Omega')}\le M_w.
\end{equation}
Accordingly, define the continuous coefficient reconstruction $q^{\alpha,\delta}\in L^2(\Omega')$ as the unique
minimizer of
\begin{equation}\label{eq:q_cont_min_noise}
\mathcal F_{\alpha,\delta}(p)
:=\|w^{\alpha,\delta}+p\,u^{\alpha,\delta}\|_{L^2(\Omega')}^2
+\alpha_q\|p\|_{L^2(\Omega')}^2,
\qquad p\in L^2(\Omega').
\end{equation}
For the notational convenience, let us write $u^\alpha = u^{\alpha,\delta}$, $u_0^\alpha = u_0^{\alpha,\delta}$, $w^\alpha = w^{\alpha,\delta}$, and $q^\alpha = q^{\alpha,\delta}$ in the next result. 
Let $I_h^Q:H^m(\Omega')\to Q_h$ be a standard approximation operator with $m\in(0,r+1]$ such that
\begin{equation}\label{eq:Qapprox}
\|q-I_h^Q q\|_{L^2(\Omega')} \le C\,h^m\,\|q\|_{H^m(\Omega')},\qquad
\|I_h^Q q\|_{L^2(\Omega')} \le C\|q\|_{L^2(\Omega')},
\qquad q\in H^m(\Omega').
\end{equation}

\begin{theorem}[Discretization error for the coefficient reconstruction on $\Omega'$]\label{thm:q_disc_error}
Fix $\alpha>0$ and $\alpha_q>0$. Assume that \(\Omega'\subset\subset\Omega\) and $W\subset \Omega_R$ such that $\dist(W,\Omega)=d_0>0$. In addition:
\begin{enumerate}
 \item Let $u_0^\alpha\in \widetilde H^s(\Omega)$ be the minimizer of the state functional
\eqref{eq:Jcont_noise}, let $u^\alpha=\bar u_f+u_0^\alpha$, and let $w^\alpha\in L^2(\Omega')$ be defined by
\eqref{eq:w_exact}. Let the coefficient reconstruction $q^\alpha\in L^2(\Omega')$ be the unique minimizer of \eqref{eq:q_cont_min_noise}.

\item Let $u_h^\alpha\in V_h$ and $w_h^\alpha\in V_{h,0}$ denote the corresponding discrete state and
interior fractional Laplacian reconstructions defined by \eqref{eq:J} and \eqref{eq:w-def}
respectively. Let $q_h^\alpha\in Q_h$ be the discrete coefficient reconstruction, i.e.\ the unique
minimizer of \eqref{eq:q-min}. Equivalently, $q_h^\alpha$ satisfies
\eqref{eq:q-var-disc-alpha-final}.

\item Assume that there exists a constant $\kappa_0>0$ such that
\[
|u^\alpha(x)|\ge \kappa_0
\qquad \text{for a.e.\ }x\in \Omega'.
\]
Define the discretization errors
\[
E_u(h,\alpha):=\|u_h^\alpha-u^\alpha\|_{L^\infty(\Omega')},\qquad
E_w(h,\alpha):=\|w_h^\alpha-w^\alpha\|_{L^2(\Omega')}.
\]
Assume $E_u(h,\alpha)\le \kappa_0/2$. Then $|u_h^\alpha|\ge \kappa_0/2$ a.e.\ on $\Omega'$. 

\item Assume additionally that $q^\alpha\in H^m(\Omega')$ for some $0<m\le r+1$, where $r$
is the polynomial degree of $Q_h$. Let $I_h^Q:H^m(\Omega')\to Q_h$ be an $L^2$-stable approximation operator satisfying \eqref{eq:Qapprox}.
\end{enumerate}

Then there exists a constant $C>0$ independent of $h$ (but possibly depending on
$\|u^\alpha\|_{L^\infty(\Omega')}$, $\|w^\alpha\|_{L^2(\Omega')}$, $\alpha_q$, and $\Omega'$)
such that
\begin{equation}\label{eq:q_disc_err_final}
\|q_h^\alpha-q^\alpha\|_{L^2(\Omega')}\le
\frac{C}{\kappa_0^{2}}\Big(E_w(h,\alpha)+E_u(h,\alpha)+h^m\|q^\alpha\|_{H^m(\Omega')}\Big).
\end{equation}
\end{theorem}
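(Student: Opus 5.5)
The proof is a Strang-type argument for weighted $L^2$ projections. It compares the discrete variational equation \eqref{eq:q-var-disc-alpha-final} with the continuous optimality condition of \eqref{eq:q_cont_min_noise}, tested against discrete functions.

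First, since $Q_h\subset L^2(\Omega')$, the minimizer $q^\alpha$ of $\mathcal F_{\alpha,\delta}$ satisfies
\[
\int_{\Omega'}\big((u^\alpha)^2+\alpha_q\big)q^\alpha\phi\,dx=-\int_{\Omega'}w^\alpha u^\alpha\phi\,dx\qquad\forall\phi\in L^2(\Omega').
\]
Choosing $\phi=-q^\alpha$ and using $|u^\alpha|\ge\kappa_0$ gives the a priori bound
\[
\|q^\alpha\|_{L^2(\Omega')}\le \|w^\alpha\|_{L^2(\Omega')}\|u^\alpha\|_{L^\infty(\Omega')}/\kappa_0^2 .
\]
Assumption~(3) gives $|u_h^\alpha|\ge\kappa_0/2$ on $\Omega'$ by the triangle inequality. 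It also gives $\|u_h^\alpha\|_{L^\infty(\Omega')}\le\|u^\alpha\|_{L^\infty(\Omega')}+\kappa_0/2$, which is bounded by a constant depending on $\|u^\alpha\|_{L^\infty}$.

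Next, split $q_h^\alpha-q^\alpha=(q_h^\alpha-I_h^Qq^\alpha)+(I_h^Qq^\alpha-q^\alpha)$. The second term is at most $Ch^m\|q^\alpha\|_{H^m(\Omega')}$ by \eqref{eq:Qapprox}. For the first term set $e_h:=q_h^\alpha-I_h^Qq^\alpha\in Q_h$ and use coercivity of the discrete weighted form:
\[
\tfrac{\kappa_0^2}{4}\|e_h\|_{L^2(\Omega')}^2\le\int_{\Omega'}\big((u_h^\alpha)^2+\alpha_q\big)e_h^2\,dx .
\]
Subtract the discrete and continuous equations, both tested with $\phi_h=e_h$. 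Then
\[
\int_{\Omega'}\big((u_h^\alpha)^2+\alpha_q\big)e_h\phi_h
=\int_{\Omega'}\big((u_h^\alpha)^2+\alpha_q\big)(q^\alpha-I_h^Qq^\alpha)\phi_h
+\int_{\Omega'}\big((u^\alpha)^2-(u_h^\alpha)^2\big)q^\alpha\phi_h
-\int_{\Omega'}\big(w_h^\alpha u_h^\alpha-w^\alpha u^\alpha\big)\phi_h .
\]
Each term is bounded by Cauchy--Schwarz.
\begin{itemize}
\item The first term is at most $C\|q^\alpha-I_h^Qq^\alpha\|_{L^2}\|e_h\|_{L^2}$, with $C$ depending on $\|u_h^\alpha\|_{L^\infty}$ and $\alpha_q$.
\item In the second term write $(u^\alpha)^2-(u_h^\alpha)^2=(u^\alpha-u_h^\alpha)(u^\alpha+u_h^\alpha)$. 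It is at most $C\,E_u\|q^\alpha\|_{L^2}\|e_h\|_{L^2}$, and $\|q^\alpha\|_{L^2}$ is controlled by the a priori bound above.
\item In the third term write $w_h^\alpha u_h^\alpha-w^\alpha u^\alpha=(w_h^\alpha-w^\alpha)u_h^\alpha+w^\alpha(u_h^\alpha-u^\alpha)$. It is at most $C(E_w+\|w^\alpha\|_{L^2}E_u)\|e_h\|_{L^2}$.
\end{itemize}
Dividing by $\|e_h\|_{L^2}$ gives $\|e_h\|\le \frac{4}{\kappa_0^2}C(E_w+E_u+h^m\|q^\alpha\|_{H^m})$. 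Adding the interpolation term, with $1\le C/\kappa_0^2$ after adjusting constants via $\kappa_0\le\|u^\alpha\|_{L^\infty}$, yields \eqref{eq:q_disc_err_final}.

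The main obstacle is bookkeeping of the powers of $\kappa_0$. The a priori bound on $\|q^\alpha\|$ brings an extra $\kappa_0^{-2}$ into the $E_u$ term. To keep the stated $\kappa_0^{-2}$, I would let $C$ absorb $\|q^\alpha\|_{L^2}$. Alternatively, I would bound $\|q^\alpha\|_{L^2}\le\alpha_q^{-1/2}\|w^\alpha\|_{L^2}$, using $\mathcal F_{\alpha,\delta}(q^\alpha)\le\mathcal F_{\alpha,\delta}(0)$, which is allowed since $C$ may depend on $\alpha_q$. I would also use $\alpha_q$ in the coercivity constant where convenient, since it only improves the bounds.
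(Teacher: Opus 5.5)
Your proposal is correct and follows essentially the same argument as the paper. Both use the splitting $q_h^\alpha-q^\alpha=e_h+(I_h^Qq^\alpha-q^\alpha)$, test the difference of the discrete and continuous normal equations with $e_h$, apply the coercivity bound $\tfrac{\kappa_0^2}{4}\|e_h\|_{L^2(\Omega')}^2$, and finish with termwise Cauchy--Schwarz. The differences are cosmetic: you pair $E_u$ with $q^\alpha$ rather than $I_h^Qq^\alpha$, and you make explicit two absorptions the paper leaves implicit, namely $\|q^\alpha\|_{L^2(\Omega')}\le\alpha_q^{-1/2}\|w^\alpha\|_{L^2(\Omega')}$ and the interpolation term via $\kappa_0\le\|u^\alpha\|_{L^\infty(\Omega')}$.
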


\begin{proof}
By strict convexity of the functional in \eqref{eq:q_cont_min_noise}, the minimizer $q^\alpha$
is characterized by
\begin{equation}\label{eq:q-var-cont-alpha}
\int_{\Omega'}\big((u^\alpha)^2+\alpha_q\big)\,q^\alpha\,\phi\,dx
=-\int_{\Omega'} w^\alpha u^\alpha\,\phi\,dx
\qquad\forall \phi\in L^2(\Omega').
\end{equation}
Let $q_I^\alpha:=I_h^Q q^\alpha\in Q_h$ and set $e_h:=q_h^\alpha-q_I^\alpha\in Q_h$.
Then
\begin{equation}\label{eq:q_error_split}
\|q_h^\alpha-q^\alpha\|_{L^2(\Omega')}
\le \|e_h\|_{L^2(\Omega')}+\|q_I^\alpha-q^\alpha\|_{L^2(\Omega')}.
\end{equation}
Testing \eqref{eq:q-var-disc-alpha-final} with $\phi_h=e_h$ gives
\[
\int_{\Omega'}\big((u_h^\alpha)^2+\alpha_q\big)\,q_h^\alpha\,e_h\,dx
=-\int_{\Omega'} w_h^\alpha u_h^\alpha\,e_h\,dx,
\]
and using $q_h^\alpha = e_h + q_I^\alpha$, we obtain
\[
\int_{\Omega'}\big((u_h^\alpha)^2+\alpha_q\big)\,e_h^2\,dx
=-\int_{\Omega'} w_h^\alpha u_h^\alpha\,e_h\,dx
-\int_{\Omega'}\big((u_h^\alpha)^2+\alpha_q\big)\,q_I^\alpha\,e_h\,dx.
\]
Add and subtract the continuous identity \eqref{eq:q-var-cont-alpha} (tested with $e_h$),
to obtain
\begin{align*}
\int_{\Omega'}\big((u_h^\alpha)^2+\alpha_q\big)\,e_h^2\,dx
&=
-\int_{\Omega'}(w_h^\alpha u_h^\alpha-w^\alpha u^\alpha)\,e_h\,dx 
+\int_{\Omega'}\big((u^\alpha)^2+\alpha_q\big)\,(q^\alpha-q_I^\alpha)\,e_h\,dx \\
&\qquad
+\int_{\Omega'}\big((u^\alpha)^2-(u_h^\alpha)^2\big)\,q_I^\alpha\,e_h\,dx.
\end{align*}
By assumption \(E_u(h,\alpha)\le \kappa_0/2\) and  \(|u_h^\alpha|
\ge \kappa_0/2\) a.e. on \(\Omega'\), we have
\[
\int_{\Omega'}\big((u_h^\alpha)^2+\alpha_q\big)\,e_h^2\,dx
\ge \int_{\Omega'}(u_h^\alpha)^2 e_h^2\,dx
\ge \frac{\kappa_0^2}{4}\|e_h\|_{L^2(\Omega')}^2.
\]
Using Cauchy--Schwarz and the definitions of \(E_u,E_w\), we obtain
\[
\|w_h^\alpha u_h^\alpha-w^\alpha u^\alpha\|_{L^2(\Omega')}
\le \|u_h^\alpha\|_{L^\infty(\Omega')} E_w(h,\alpha)
   + \|w^\alpha\|_{L^2(\Omega')} E_u(h,\alpha).
\]
Moreover,
\(
\|u_h^\alpha\|_{L^\infty(\Omega')}
\le \|u^\alpha\|_{L^\infty(\Omega')} + E_u(h,\alpha)
\le \|u^\alpha\|_{L^\infty(\Omega')}+\kappa_0/2,
\) so \(\|u_h^\alpha\|_{L^\infty(\Omega')}\) is bounded independently of \(h\). Also,
\[
\|(u^\alpha)^2-(u_h^\alpha)^2\|_{L^\infty(\Omega')}
\le
\big(\|u^\alpha\|_{L^\infty(\Omega')}+\|u_h^\alpha\|_{L^\infty(\Omega')}\big)\,
E_u(h,\alpha)
\le C\,E_u(h,\alpha).
\]
Finally, \eqref{eq:Qapprox} gives
\(
\|q^\alpha-q_I^\alpha\|_{L^2(\Omega')}
\le Ch^m\|q^\alpha\|_{H^m(\Omega')},\) \(
\|q_I^\alpha\|_{L^2(\Omega')}
\le C\|q^\alpha\|_{L^2(\Omega')}.
\)
Combining these estimates and dividing by the coercivity constant yields
\[
\|e_h\|_{L^2(\Omega')}
\le \frac{C}{\kappa_0^2}
\Big(E_w(h,\alpha)+E_u(h,\alpha)+h^m\|q^\alpha\|_{H^m(\Omega')}\Big).
\]
Insert this into \eqref{eq:q_error_split} and use \eqref{eq:Qapprox} once more to bound
\(\|q_I^\alpha-q^\alpha\|_{L^2(\Omega')}\), which proves \eqref{eq:q_disc_err_final}.
\end{proof}

\begin{remark}[Continuous noisy state error]\label{rem:cont_noisy_state}
Let \(u_0^{\alpha,\delta} \in \widetilde H^s(\Omega)\) be the minimizer of \eqref{eq:Jcont_noise} with \(u^{\alpha,\delta}:=\bar u_f+u_0^{\alpha,\delta}\) and \(u \in H^s(\R^d)\) be the exact solution with \(u = \bar u_f + u_0\). Then, from \eqref{eq:matrixA_u0} and \(\mu = \mathcal L u_0 \), we have
\[
(A+\alpha I)u_0^{\alpha,\delta}=Au_0+\mathcal L^*(\mu^\delta-\mu),
\]
and therefore
\[
u^{\alpha,\delta}-u
=
u_0^{\alpha,\delta}-u_0
=
-\alpha(A+\alpha I)^{-1}u_0
+(A+\alpha I)^{-1}\mathcal L^*(\mu^\delta-\mu).
\]
The first term is estimated exactly as in Lemma~\ref{lem:tikh_bias} under Assumption \ref{ass:source_state}, while the second term is estimated by the standard resolvent bound
\[
\|(A+\alpha I)^{-1}\mathcal L^* g\|_{H^s(\mathbb R^d)}
\le \frac{1}{2\sqrt{\alpha}}\|g\|_{L^2(W)}
\qquad \forall g\in L^2(W),
\]
which follows by testing \((A+\alpha I)z=\mathcal L^*g\) with
\(z=(A+\alpha I)^{-1}\mathcal L^*g \in \widetilde H^s(\Omega)\) in the \(H^s(\R^d)\)-inner product and using \(A= \mathcal L^*\mathcal L\), the adjoint identity, and Cauchy--Schwarz in $L^2(W)$.
Hence
\[
\|u^{\alpha,\delta}-u\|_{H^s(\mathbb R^d)}
\le
C\Big(
\alpha^\nu\|w\|_{H^s(\mathbb R^d)}
+\epsilon_u
+\frac{\delta}{\sqrt{\alpha}}
\Big),
\]
where \(w\) is the source element from Assumption~\ref{ass:source_state}. In particular,
under the parameter choice from Theorem~\ref{thm:state_rate} and the assumptions of that
theorem, we have
\[
\|u^\alpha-u\|_{H^s(\mathbb R^d)}
=
\mathcal O\!\Big((\delta+\eta(h,R))^{\frac{2\nu}{2\nu+1}}\Big)
\qquad\text{as }\delta+\eta(h,R)\to0.
\]
\end{remark}

\begin{remark}[How \(E_u\) is related to the state reconstruction error]
If \(s>d/2\), then the local Sobolev embedding yields
\(
\|u_h^{\alpha,\delta}-u^{\alpha,\delta}\|_{L^\infty(\Omega')}
\le
C_{\Omega'}\|u_h^{\alpha,\delta}-u^{\alpha,\delta}\|_{H^s(\mathbb R^d)}.
\)
By the triangle inequality,
\[
\|u_h^{\alpha,\delta}-u^{\alpha,\delta}\|_{H^s(\mathbb R^d)}
\le
\|u_h^{\alpha,\delta}-u\|_{H^s(\mathbb R^d)}
+
\|u^{\alpha,\delta}-u\|_{H^s(\mathbb R^d)}.
\]
Therefore, under the hypothesis of Theorem \ref{thm:state_rate} together with Remark \ref{rem:cont_noisy_state},  we have
\[
\|u_h^{\alpha,\delta}-u^{\alpha,\delta}\|_{L^\infty(\Omega')} =
\mathcal O \Big((\delta+\eta(h,R))^{\frac{2\nu}{2\nu+1}}\Big)
\qquad\text{as } \delta+\eta(h,R)\to 0,
\]
whenever \(s>d/2\). Since \(0<s<1\), the condition \(s>d/2\) can hold only in dimension \(d=1\). For \(d\ge2\), or more generally without additional regularity
beyond the present \(H^s\)-framework, we keep \(E_u(h,\alpha)\) as a separate consistency
term. At present, no analogous estimate for \(E_w(h,\alpha)\) follows
from the state analysis alone, since \(w^\alpha=((-\Delta)^s u^\alpha)|_{\Omega'}\)
involves one further application of the nonlocal operator. Hence, \(E_w(h,\alpha)\) is
kept as a separate consistency term in the coefficient reconstruction estimate.
\end{remark}

\subsection{Logarithmic stability for the reconstructed potential (single measurement)}\label{subsec:stab2d}
We now connect the reconstruction error \eqref{eq:q_disc_err_final} to the (single measurement) logarithmic stability estimate
for the fractional Calder\'on problem. Under the separation assumption \eqref{eq:W-sep}, the exterior observation belongs to $L^2(W)$;
see \cite[Prop. 2.2]{li2025numerical}. This justifies the $L^2(W)$-noise model used below.
To apply R\"uland's logarithmic stability theorem \cite[Thm. 1.1]{Ruland2021stability}, we combine this with the continuous (indeed, compact) embedding $L^2(W)\hookrightarrow H^{-s}(W)$ on bounded $W$. Once again, for the notational convenience, let us write $u^\alpha = u^{\alpha,\delta}$, $u_0^\alpha = u_0^{\alpha,\delta}$, $w^\alpha = w^{\alpha,\delta}$, and $q^\alpha = q^{\alpha,\delta}$ in the next result. 

\begin{theorem}[Logarithmic stability for the reconstructed potential]\label{thm:stability2d}
Let $\Omega\subset\mathbb R^d$ and $W\subset \Omega_e$ be bounded,
nonempty Lipschitz sets such that $\overline\Omega\cap\overline W=\emptyset$, and let $0<s<1$.
Assume that:
\begin{enumerate}
\item The exterior datum $f\in \widetilde H^{s+\epsilon}(W)\setminus\{0\}$ satisfies
\begin{equation}\label{eq:f_condition}
\frac{\|f\|_{H^s(W)}}{\|f\|_{L^2(W)}}\le C_f
\end{equation}
for some constant $C_f>0$.

\item The true potential $q\in C^{0,s}(\overline\Omega)$ satisfies
\[
\mathrm{supp}(q)\subset \Omega'\subset\subset\Omega,
\qquad
\|q\|_{C^{0,s}(\overline\Omega)}\le M.
\]

\item The true solution $u=\bar u_f+u_0\in H^s(\mathbb R^d)$ of \eqref{eq:fwd} satisfies
\[
\Lambda_q(f)=((-\Delta)^s u)\big|_W, \qquad \text{ and }\qquad \mu= ((-\Delta)^s u_0)|_W =\mathcal L u_0.
\]
\item The noisy approximation $\mu^\delta\in L^2(W)$ of $((-\Delta)^s u_0)|_W$ satisfies
\begin{equation}\label{eq:noise2d}
\|\mu^\delta-\mu\|_{L^2(W)}\le \delta.
\end{equation}

\item For $\alpha>0$, let $u_0^{\alpha}\in \widetilde H^s(\Omega)$ be the unique minimizer of \eqref{eq:Jcont_noise}
and
\(
u^{\alpha}=\bar u_f+u_0^{\alpha}.
\)
Assume that \(
\|w^{\alpha,\delta}\|_{L^2(\Omega')}\le M_w,
\)
and that there exists $\kappa_0>0$, independent of $\delta$, such that
\[
|u^{\alpha}(x)|\ge \kappa_0
\qquad \text{for a.e.\ }x\in \Omega'
\]
for all sufficiently small $\delta>0$.

\item The continuous coefficient reconstruction $q^{\alpha}\in L^2(\Omega')$ is the unique
minimizer of \eqref{eq:q_cont_min_noise}.
Assume moreover that
\[
q^{\alpha}\in C^{0,s}(\overline\Omega),
\qquad
\mathrm{supp}(q^{\alpha})\subset \Omega',
\qquad
\|q^{\alpha}\|_{C^{0,s}(\overline\Omega)}\le M,
\]
and that the forward problem \eqref{eq:fwd} with potential $q^{\alpha}$ and exterior datum $f$
is uniquely solvable.
\end{enumerate}

Then there exist constants $\gamma>0$ and $C,C_1>0$, depending only on
$\Omega,W,s,C_f,\epsilon,M,M_w,\kappa_0,d$, such that
\begin{equation}\label{eq:stability}
\|q^{\alpha}-q\|_{L^\infty(\Omega)}
\le C_1\,|\log(C(\delta+\sqrt{\alpha}+\alpha_q))|^{-\gamma},
\end{equation}
for all sufficiently small $\delta>0$ and $\alpha>0$. In particular, if $\alpha_q\sim\delta$ and $\alpha=\mathcal O(\delta^p)$ for some $0<p<2$, then
\begin{equation}\label{stab:q_alpha}
\|q^{\alpha}-q\|_{L^\infty(\Omega)}
\le C_1\,|\log(C\delta)|^{-\gamma},
\qquad \text{for all sufficiently small }\delta>0.
\end{equation}
\end{theorem}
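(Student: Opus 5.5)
The strategy is to treat $q^{\alpha}$ as an admissible potential in Rüland's single-measurement stability theorem \cite[Thm.~1.1]{Ruland2021stability}. The measured datum $\Lambda_q f$ would then be compared with the datum $\Lambda_{q^{\alpha}} f$ produced by the reconstructed potential, and the resulting data discrepancy would be bounded by $C(\delta+\sqrt{\alpha}+\alpha_q)$.

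Assumptions (1), (2) and (6) put both $q$ and $q^{\alpha}$ in the class $\{p\in C^{0,s}(\overline\Omega):\ \|p\|_{C^{0,s}}\le M,\ \mathrm{supp}(p)\subset\Omega'\}$, with a datum $f$ satisfying the frequency bound \eqref{eq:f_condition}. Rüland's theorem then yields
\[
\|q^{\alpha}-q\|_{L^\infty(\Omega)}\le C_1\,\bigl|\log\bigl(\|\Lambda_{q^\alpha}f-\Lambda_q f\|_{H^{-s}(W)}\bigr)\bigr|^{-\gamma},
\]
and the embedding $L^2(W)\hookrightarrow H^{-s}(W)$ lets us work with the $L^2(W)$ norm instead. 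Since $t\mapsto|\log t|^{-\gamma}$ is increasing for small $t$, it remains to show
\[
\|\Lambda_{q^\alpha}f-\Lambda_q f\|_{L^2(W)}\le C(\delta+\sqrt{\alpha}+\alpha_q).
\]

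Let $\tilde u$ solve \eqref{eq:fwd} with potential $q^{\alpha}$ and datum $f$. I would split
\[
\Lambda_{q^\alpha}f-\Lambda_q f=\mathcal L(\tilde u-\bar u_f-u_0^\alpha)+(\mathcal L u_0^\alpha-\mu^\delta)+(\mu^\delta-\mu).
\]
The last term is at most $\delta$ by \eqref{eq:noise2d}. For the middle term, testing $J^\delta_\alpha(u_0^\alpha)\le J^\delta_\alpha(u_0)$ gives $\|\mathcal L u_0^\alpha-\mu^\delta\|_{L^2(W)}\le \delta+\sqrt{\alpha}\,\|u_0\|_{H^s}$.

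For the first term, set $e:=\tilde u-u^{\alpha}\in\widetilde H^s(\Omega)$. Then $e$ solves
\[
(-\Delta)^s e+q^{\alpha}e=-\bigl((-\Delta)^s u^{\alpha}+q^{\alpha}u^{\alpha}\bigr)=-(w^{\alpha}+q^{\alpha}u^{\alpha})
\]
in $\Omega$, where the right-hand side is understood as $w^\alpha$ extended to $\Omega$; the portion in $\Omega\setminus\Omega'$ needs care (see the obstacle below). From the Euler--Lagrange equation \eqref{eq:q-var-cont-alpha}, pointwise on $\Omega'$ we have $w^\alpha+q^\alpha u^\alpha=-\alpha_q q^\alpha/u^\alpha$. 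This has $L^2$ norm at most $\alpha_q M|\Omega'|^{1/2}/\kappa_0$, using $|u^\alpha|\ge\kappa_0$. Well-posedness of the forward problem for $q^{\alpha}$, with a resolvent bound uniform over the class, together with Lemma~\ref{lem:pointwise} then gives $\|\mathcal L e\|_{L^2(W)}\le C\alpha_q$. Collecting the three pieces gives \eqref{eq:stability}.

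The corollary \eqref{stab:q_alpha} follows by inserting $\alpha_q\sim\delta$ and $\sqrt\alpha=\mathcal O(\delta^{p/2})$. Since $|\log(C\delta^{p/2})|\ge \tfrac p2|\log\delta|-C$, the power loss is absorbed into $C_1$.

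The main obstacle is the residual in $\Omega\setminus\Omega'$. There $q^{\alpha}=0$, but $(-\Delta)^s u^{\alpha}$ need not vanish, because $u^{\alpha}$ is only a Tikhonov approximation and not an exact solution. I would try to bound it by $\|(-\Delta)^s(u^\alpha-u)\|$ restricted to $\Omega\setminus\Omega'$ (using $(-\Delta)^s u=0$ there), relying on Remark~\ref{rem:cont_noisy_state}. This forces the constants to absorb a rate in $\alpha$ and $\delta$, and may require choosing the logarithmic modulus with argument $\delta+\sqrt\alpha+\alpha_q$ generously. A second subtlety is the uniformity of the forward resolvent constant for $q^{\alpha}$ within the class, which I would take from the assumed unique solvability plus compactness of the $C^{0,s}$-ball in $L^\infty$.
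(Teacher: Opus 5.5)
Your outline is the paper's own argument. It compares $J^\delta_\alpha(u_0^\alpha)\le J^\delta_\alpha(u_0)$ and uses the pointwise Euler--Lagrange formula to get an $\mathcal O(\alpha_q)$ residual on $\Omega'$. Your bound $\|\alpha_q q^\alpha/u^\alpha\|_{L^2(\Omega')}\le\alpha_q M|\Omega'|^{1/2}/\kappa_0$ is a valid substitute for the paper's $M_w\alpha_q/\kappa_0^2$. It then solves the forward problem with $q^\alpha$, applies an energy estimate with Lemma~\ref{lem:pointwise}, embeds $L^2(W)\hookrightarrow H^{-s}(W)$, and invokes R\"uland's theorem.

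The step you call the main obstacle is, however, a genuine gap, and the paper's proof does not close it either. It states the error equation only in $\Omega'$ and applies an energy estimate ``in $\Omega'$'' to $e^\alpha$, which lies only in $\widetilde H^s(\Omega)$. On $\Omega\setminus\overline{\Omega'}$ the residual is $(-\Delta)^s u^\alpha$, and no hypothesis bounds it by $\delta+\sqrt\alpha+\alpha_q$.

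Your repair does not produce \eqref{eq:stability}. To make it rigorous, take a smooth cutoff $\chi\in C_c^\infty(\Omega')$ equal to $1$ near $\mathrm{supp}\,q\cup\mathrm{supp}\,q^\alpha$; a sharp split along $\partial\Omega'$ is unavailable in $H^{-s}$ for $s\ge 1/2$. Since $a(u,(1-\chi)\phi)=0$, the residual functional on $\widetilde H^s(\Omega)$ is $\phi\mapsto\int_{\Omega'}r^\alpha\chi\phi\,dx+a(u^\alpha-u,(1-\chi)\phi)$. The argument of the logarithm therefore becomes $\delta+\sqrt\alpha+\alpha_q+\|u^\alpha-u\|_{H^s(\mathbb R^d)}$.

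Remark~\ref{rem:cont_noisy_state} bounds that last term only under Assumption~\ref{ass:source_state}, which is not a hypothesis here. Even then the bound is $\alpha^\nu\|w\|_{H^s(\mathbb R^d)}+\epsilon_u+\delta/\sqrt\alpha$. The term $\delta/\sqrt\alpha$ is not controlled by any power of $\delta+\sqrt\alpha+\alpha_q$; it blows up for $\alpha\ll\delta^2$, which the theorem allows. In addition, $\nu$ and $\|w\|_{H^s(\mathbb R^d)}$ are not among the allowed dependencies of the constants.

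So your argument yields at most a version of \eqref{stab:q_alpha} under an added source condition and a two-sided rule $\alpha\simeq\delta^p$, $0<p<2$. Without a source condition it gives only $q^\alpha\to q$, with no rate, as $\alpha,\alpha_q,\delta^2/\alpha\to0$. It does not give \eqref{eq:stability} as stated.

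Your second subtlety is not settled by compactness either. The $C^{0,s}$-ball is compact in $L^\infty$. Unless $M$ is below the first Dirichlet eigenvalue of $(-\Delta)^s$ on $\Omega$, however, the ball contains potentials violating Assumption~\ref{ass:nonresonance}. Unique solvability for each individual $q^\alpha$ is an open condition and does not stop the solution-operator norms from blowing up as $\alpha,\delta\to0$. You also cannot use closeness of $q^\alpha$ to the non-resonant $q$, since that is what is being proved.

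A uniform bound therefore has to be assumed explicitly. Examples are a uniform bound on the solution operator for $q^\alpha$, or $q^\alpha\ge 0$, which gives coercivity with constants depending only on $\Omega$ and $s$ via the fractional Poincar\'e inequality. The paper's proof makes the same tacit assumption when it lets the energy constant depend only on $M$.
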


\begin{proof}
By the minimality of $u_0^{\alpha}$ in \eqref{eq:Jcont_noise},
\[
J_\alpha^\delta(u_0^{\alpha})
\le J_\alpha^\delta(u_0)
=
\|\mu^\delta-\mu\|_{L^2(W)}^2+\alpha\|u_0\|_{H^s(\mathbb R^d)}^2
\le \delta^2+\alpha\|u_0\|_{H^s(\mathbb R^d)}^2.
\]
Hence,
\[
\|\mathcal L u_0^{\alpha}-\mu^\delta\|_{L^2(W)}
\le \delta+\sqrt{\alpha}\,\|u_0\|_{H^s(\mathbb R^d)}.
\]
Since $\Lambda_q(f)=((-\Delta)^s \bar u_f)|_W+\mu$, we obtain
\begin{equation}\label{eq:DN_mismatch_state_new}
\|((-\Delta)^s u^{\alpha})|_W-\Lambda_q(f)\|_{L^2(W)}
\le 2\delta+\sqrt{\alpha}\,\|u_0\|_{H^s(\mathbb R^d)}.
\end{equation}
By the strict convexity of \eqref{eq:q_cont_min_noise}, we have \eqref{eq:q-var-cont-alpha}, and the minimizer $q^{\alpha}$ is characterized
pointwise a.e.\ on $\Omega'$ by
\[
q^{\alpha}(x)
=
-\frac{u^{\alpha}(x)\,w^{\alpha}(x)}
{|u^{\alpha}(x)|^2+\alpha_q}.
\]
Therefore
\[
r^{\alpha}
:= w^{\alpha}+q^{\alpha}u^{\alpha}
=
\frac{\alpha_q}{|u^{\alpha}|^2+\alpha_q}\,w^{\alpha},
\]
and using the lower bound on $u^{\alpha}$ we get
\begin{equation}\label{eq:residual_bound_new}
\|r^{\alpha}\|_{L^2(\Omega')}
\le
\frac{\alpha_q}{\kappa_0^2}\,\|w^{\alpha}\|_{L^2(\Omega')}
\le
\frac{M_w}{\kappa_0^2}\,\alpha_q.
\end{equation}
Let $\widetilde u^{\alpha}\in H^s(\mathbb R^d)$ be the unique solution of the forward problem
\eqref{eq:fwd} with potential $q^{\alpha}$ and exterior datum $f$, and define
\(
e^{\alpha}:=u^{\alpha}-\widetilde u^{\alpha}.
\)
Then
\[
((-\Delta)^s+q^{\alpha})e^{\alpha}=r^{\alpha}
\quad\text{in }\Omega',
\qquad
e^{\alpha}=0
\quad \text{ in }\Omega_e.
\]
The standard energy estimate for the fractional Schr\"odinger equation in $\Omega'$ and the assumption $\|q^{\alpha}\|_{C^{0,s}(\overline{\Omega'})}$$\le M$  implies
\[
\|e^{\alpha}\|_{H^s(\mathbb R^d)}
\le C \|r^{\alpha}\|_{H^{-s}(\Omega')}
\le C \|r^{\alpha}\|_{L^2(\Omega')}.
\]
Since $e^{\alpha}\in \widetilde H^s(\Omega)$, and $W$ is separated from $\Omega$, Lemma~\ref{lem:pointwise}
implies
\[
\|((-\Delta)^s e^{\alpha})|_W\|_{L^2(W)}
\le C \|e^{\alpha}\|_{L^2(\Omega)}
\le C \|e^{\alpha}\|_{H^s(\mathbb R^d)}
\le C \|r^{\alpha}\|_{L^2(\Omega')}.
\]
Hence
\begin{equation}\label{eq:DN_mismatch_residual_new}
\|\Lambda_{q^{\alpha}}(f)-((-\Delta)^s u^{\alpha})|_W\|_{L^2(W)} = \|((-\Delta)^s \widetilde u^{\alpha})|_W-((-\Delta)^s u^{\alpha})|_W\|_{L^2(W)}
\le C \|r^{\alpha}\|_{L^2(\Omega')}.
\end{equation}
Combining \eqref{eq:DN_mismatch_state_new}, \eqref{eq:residual_bound_new}, and
\eqref{eq:DN_mismatch_residual_new}, we obtain
\[
\|\Lambda_{q^{\alpha}}(f)-\Lambda_q(f)\|_{L^2(W)}
\le C(\delta+\sqrt{\alpha}+\alpha_q).
\]
Since $W$ is bounded, the embedding $L^2(W)\hookrightarrow H^{-s}(W)$ is continuous, so
\[
\|\Lambda_{q^{\alpha}}(f)-\Lambda_q(f)\|_{H^{-s}(W)}
\le C(\delta+\sqrt{\alpha}+\alpha_q).
\]
Now all hypotheses of R\"uland's single-measurement logarithmic stability theorem
\cite[Thm. 1]{Ruland2021stability} are satisfied: the sets $\Omega$ and $W$ are bounded Lipschitz and disjoint,
the datum $f$ satisfies \eqref{eq:f_condition}, and both $q$ and $q^{\alpha}$ belong to
$C^{0,s}(\overline\Omega)$, are supported in $\Omega'$, and obey the same {\it a priori} bound.
Therefore there exist $\gamma>0$ and $C_1>0$ such that
\[
\|q^{\alpha}-q\|_{L^\infty(\Omega)}
\le C_1\,|\log(C(\delta+\sqrt{\alpha}+\alpha_q))|^{-\gamma},
\]
which is \eqref{eq:stability}.

If $\alpha_q\sim\delta$ and $\alpha=\mathcal O(\delta^p)$ with $0<p<2$, then
$\delta+\sqrt{\alpha}+\alpha_q\le C\delta^\theta$ for some $\theta\in(0,1]$, and hence
$|\log(C\delta^\theta)|\simeq |\log(C\delta)|$ as $\delta\downarrow0$. This yields the bound \eqref{stab:q_alpha}.
\end{proof}

Note that from the triangle inequality, we have
\[
\|q_h^\alpha-q\|_{L^2(\Omega')}
\le
\|q_h^\alpha-q^\alpha\|_{L^2(\Omega')}
+
|\Omega'|^{1/2}\|q^\alpha-q\|_{L^\infty(\Omega)}.
\]
Therefore, combining Theorem \ref{thm:q_disc_error} and Theorem \ref{thm:stability2d}, we have the following corollary.  
\begin{corollary}[Total coefficient error]\label{cor:mainresult}
Assuming the hypotheses of Theorems~\ref{thm:q_disc_error} and Theorem \ref{thm:stability2d}, then 
\[
\|q_h^\alpha-q\|_{L^2(\Omega')}
\le
\frac{C}{\kappa_0^{2}}
\Big(E_w(h,\alpha)+E_u(h,\alpha)+h^m\|q^\alpha\|_{H^m(\Omega')}\Big)
+
C_1|\Omega'|^{1/2} |\log(C(\delta+\sqrt{\alpha}+\alpha_q))|^{-\gamma}.
\]
\end{corollary}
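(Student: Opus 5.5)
The argument is the triangle inequality displayed just before the statement, with each piece bounded by one of the two preceding theorems. The work is checking that both theorems refer to the same intermediate object $q^\alpha$. I will split
\[
q_h^\alpha-q=(q_h^\alpha-q^\alpha)+(q^\alpha-q)\qquad\text{on }\Omega',
\]
where $q^\alpha=q^{\alpha,\delta}$ is the continuous coefficient reconstruction, i.e.\ the minimizer of \eqref{eq:q_cont_min_noise}. It is built from the noisy continuous state $u^\alpha=u^{\alpha,\delta}=\bar u_f+u_0^{\alpha,\delta}$ and from $w^\alpha=((-\Delta)^s u^\alpha)|_{\Omega'}$. The notational conventions before Theorem~\ref{thm:q_disc_error} and Theorem~\ref{thm:stability2d} say this is the same object in both results. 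Hence the hypotheses of the two theorems are satisfied simultaneously by one family $(u^\alpha,w^\alpha,q^\alpha)$: the lower bound $|u^\alpha|\ge\kappa_0$, the bound $\|w^\alpha\|_{L^2(\Omega')}\le M_w$, and $E_u\le\kappa_0/2$.

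\textbf{First term.} Theorem~\ref{thm:q_disc_error} applies directly with the fixed $\alpha,\alpha_q$ and gives
\[
\|q_h^\alpha-q^\alpha\|_{L^2(\Omega')}\le \frac{C}{\kappa_0^{2}}\Big(E_w(h,\alpha)+E_u(h,\alpha)+h^m\|q^\alpha\|_{H^m(\Omega')}\Big).
\]
The constant depends on $\|u^\alpha\|_{L^\infty(\Omega')}$, $\|w^\alpha\|_{L^2(\Omega')}\le M_w$, $\alpha_q$ and $\Omega'$, which is exactly what the corollary allows.

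\textbf{Second term.} Since $\Omega'$ is bounded and $\Omega'\subset\Omega$, I will use
\[
\|q^\alpha-q\|_{L^2(\Omega')}\le|\Omega'|^{1/2}\|q^\alpha-q\|_{L^\infty(\Omega')}\le|\Omega'|^{1/2}\|q^\alpha-q\|_{L^\infty(\Omega)}.
\]
Both $q$ and $q^\alpha$ are supported in $\Omega'$ by the hypotheses of Theorem~\ref{thm:stability2d}, so the $L^\infty(\Omega)$ norm is meaningful with $q^\alpha$ extended by zero. Estimate \eqref{eq:stability} then yields the bound $C_1|\Omega'|^{1/2}|\log(C(\delta+\sqrt\alpha+\alpha_q))|^{-\gamma}$. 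This holds for $\delta,\alpha$ sufficiently small, which I will record as the implicit smallness regime of the corollary.

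Adding the two bounds gives the stated estimate. The only step needing care, and the one I expect to be the main obstacle, is the consistency check above. Theorem~\ref{thm:q_disc_error} must be invoked with its state $u^\alpha$ equal to the noisy continuous minimizer of \eqref{eq:Jcont_noise}, not the noiseless one from \eqref{eq:Jcont}. Likewise the discrete quantities $u_h^\alpha,w_h^\alpha$ must be computed from the same noisy data, so that $E_u$ and $E_w$ measure the distance to this specific $u^\alpha$ and $w^\alpha$. Once this identification is fixed, no further estimates are needed.
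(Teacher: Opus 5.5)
Your proposal is correct and coincides with the paper's argument. Both split through the intermediate $q^\alpha$, use $\|q^\alpha-q\|_{L^2(\Omega')}\le|\Omega'|^{1/2}\|q^\alpha-q\|_{L^\infty(\Omega)}$, and bound the two pieces by Theorems~\ref{thm:q_disc_error} and~\ref{thm:stability2d}. Your consistency check that both theorems refer to the same noisy $q^\alpha=q^{\alpha,\delta}$, and your remark that the logarithmic term is only valid for sufficiently small $\delta,\alpha$, are left implicit in the paper but agree with it.
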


\section{Numerical experiments}\label{sec:numerics}
We test the two-step reconstruction from Section~\ref{sec:recon}
(state recovery, then coefficient recovery). We add noise via
\[
\mu^\delta = \mu + \delta\,\|\mu\|_{Y}\,\frac{\xi}{\|\xi\|_{Y}},
\]
where $\xi$ is a standard Gaussian vector on the measurement grid.
The state-regularization parameter $\alpha$ must satisfy
$\delta^2/\alpha\to 0$ as $\delta\to 0$ (Tikhonov consistency Theorem \ref{thm:state}).

\begin{remark}
Throughout this section the reconstructions depend on the noise level \(\delta\) and on the regularization parameters \(\alpha=\alpha(\delta)\) and \(\alpha_q=\alpha_q(\delta)\).
Since the parameter rules are fixed explicitly in each experiment, we suppress the dependence on \(\alpha\) and \(\alpha_q\) whenever no confusion can arise. Thus, for instance,
\[
q_h^\delta = q_h^{\alpha,\delta},
\qquad
u_{0,h}^\delta = u_{0,h}^{\alpha,\delta},
\qquad
u_h^\delta = u_h^{\alpha,\delta}.
\]
\end{remark}

\subsection{Example 4.1 (1D, compactly supported smooth potential).}\label{subsec:ex41}
Let $\Omega=(-1,1)$ and
\[
W = (-3,-1-\varepsilon)\cup(1+\varepsilon,3),\qquad \varepsilon=0.05.
\]
We take $s=0.6$ and the compactly supported bump
\[
q(x)=10\,(0.75-x^2)_+,\qquad (t)_+=\max\{t,0\},
\quad \Omega'=\bigl[-\sqrt{0.75},\sqrt{0.75}\bigr],
\]
and let \(f\) be a smooth cutoff approximating the indicator \(1_W\).
Data are perturbed by relative noise levels
\[
\delta\in\{ 10^{-7},10^{-5},\ 10^{-3}, \ 10^{-1}\},
\]
with Tikhonov parameter $\alpha=\delta^{3/2}$ so that $\delta^2/\alpha\to 0$ as $\delta\to 0$ and $\alpha_q= 0.01\delta$.

Figure \ref{fig:ex41_fourpanel_summary}(A) displays the reconstruction of the interior component \(u_{0,h}^{\alpha,\delta}\). In Figure \ref{fig:ex41_fourpanel_summary}(B), we show the decomposition of the state-step error into total, bias, and propagated-noise parts. Figure \ref{fig:ex41_fourpanel_summary}(C) shows the
reconstructed potential \(q_{h}^{\alpha,\delta}\) for the above noise levels. In Figure \ref{fig:ex41_fourpanel_summary}(D), we plot the error $\|q_h^{\alpha,\delta}-q\|_{L^\infty(\Omega')}$ against \(\delta\); on the tested noise range, this gives an empirical error trend consistent with the expected logarithmic stability, where we use $20$ logarithmically spaced values in the interval 
$[10^{-10},10^{-6}]$.

\begin{figure}[t]
  \centering
  \begin{subfigure}[t]{0.49\textwidth}
    \centering
    \includegraphics[width=\textwidth]{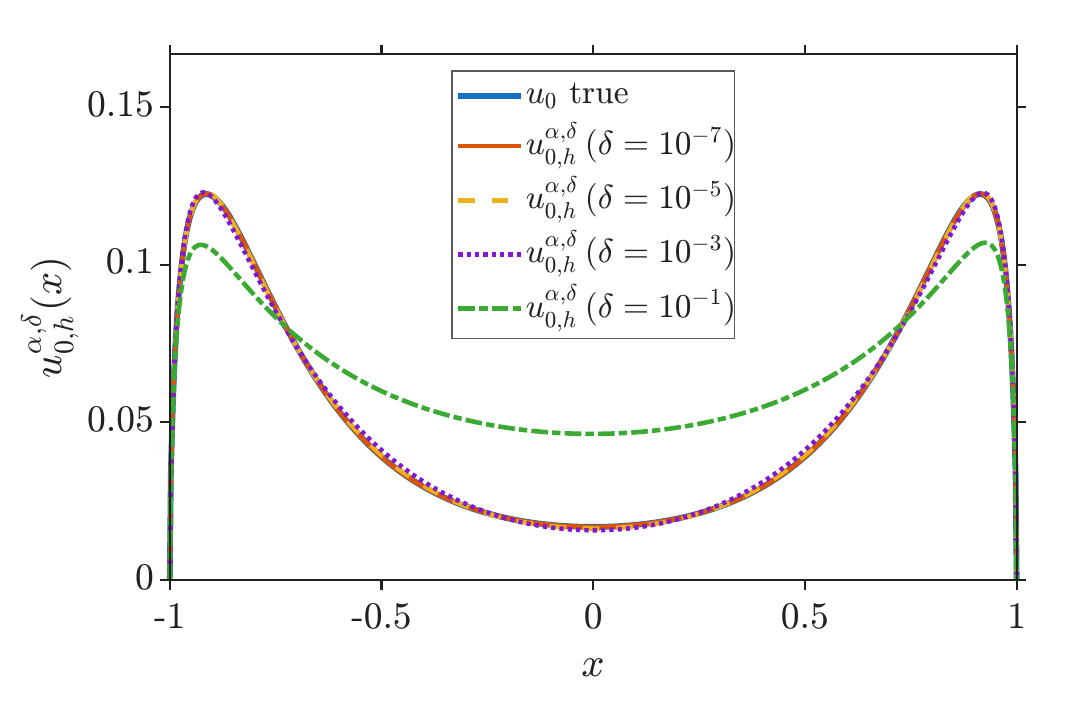}
    \caption{State-step reconstruction of the interior component $u_{0,h}^{\alpha,\delta}$.}
    \label{fig:ex41_fourpanel_u}
  \end{subfigure}\hfill
  \begin{subfigure}[t]{0.49\textwidth}
    \centering
    \includegraphics[width=\textwidth]{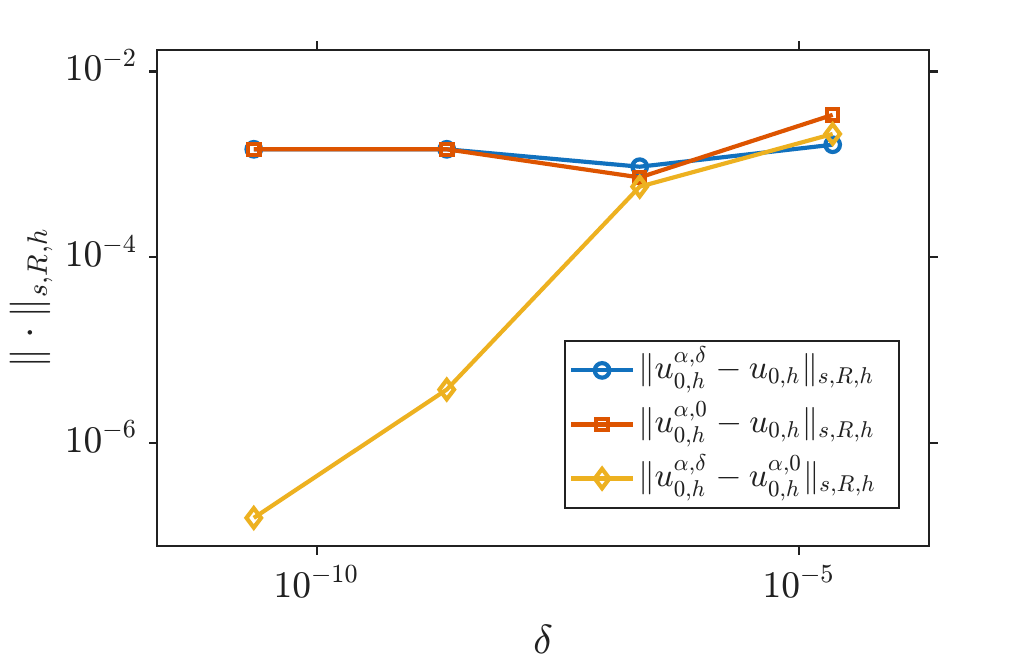}
    \caption{Noise decomposition for the state-step error.}
    \label{fig:ex41_fourpanel_decomp}
  \end{subfigure}

  \vspace{0.5em}

  \begin{subfigure}[t]{0.49\textwidth}
    \centering
    \includegraphics[width=\textwidth]{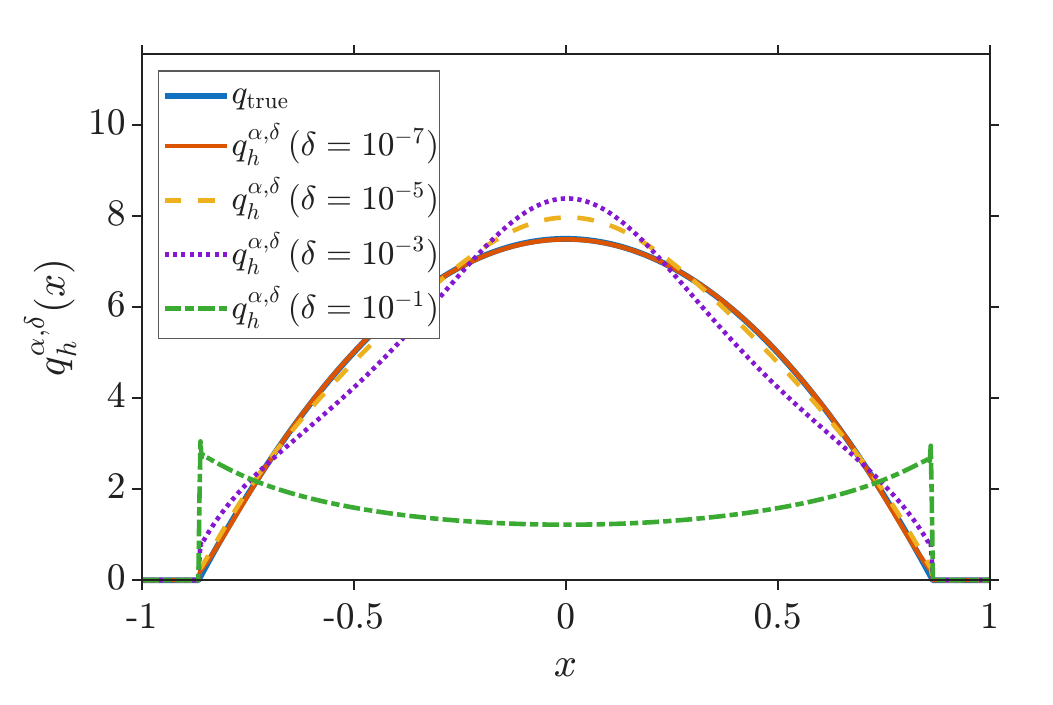}
    \caption{Reconstruction of the coefficient $q$.}
    \label{fig:ex41_fourpanel_q}
  \end{subfigure}\hfill
  \begin{subfigure}[t]{0.49\textwidth}
    \centering
    \includegraphics[width=\textwidth]{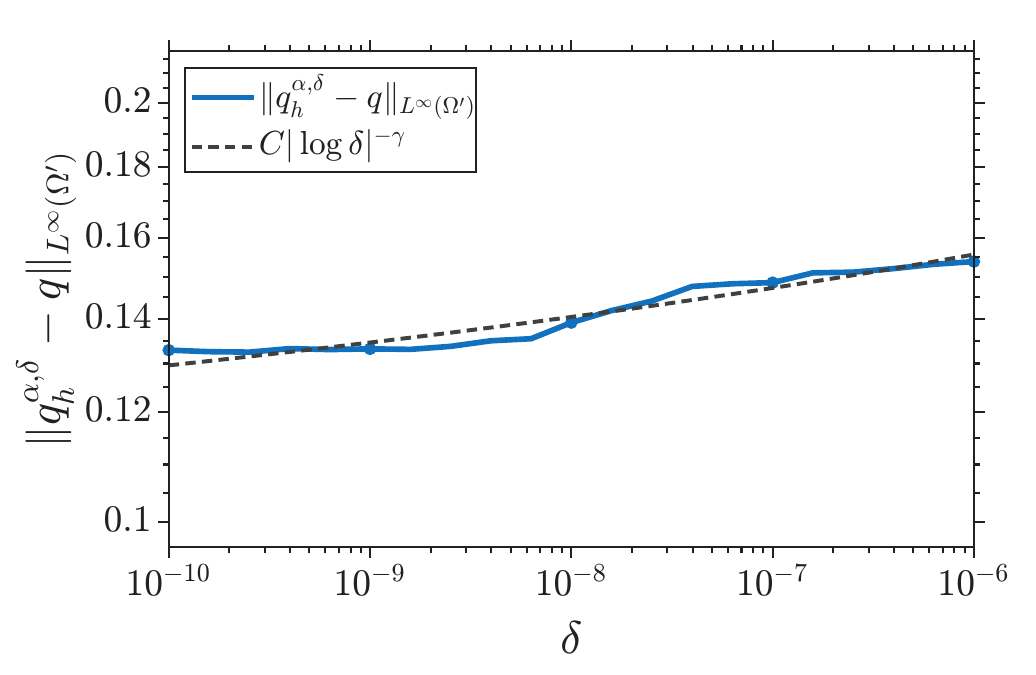}
    \caption{Potential error trend with logarithmic $x$-axis with $\gamma \approx 0.35$ and $C\approx 0.4$.}
    \label{fig:ex41_fourpanel_stab}
  \end{subfigure}

  \caption{Example~\ref{subsec:ex41}. One-dimensional reconstruction: 
  panel (A) shows the recovered interior component $u_{0,h}^{\alpha,\delta}$ for the representative noise levels, 
  panel~(B) shows the decomposition of the state-step error into total, bias, and propagated-noise parts, 
  panel~(C) shows the recovered coefficient $q_h^{\alpha,\delta}$, and panel~(D) shows the potential error 
  $\|q_h^{\alpha,\delta}-q\|_{L^\infty(\Omega')}$ as a function of the relative noise level $\delta$.}
  \label{fig:ex41_fourpanel_summary}
\end{figure}

\subsection{Example 4.2 (1D, discontinuous potential).}\label{subsec:ex42}
We consider the discontinuous potential on the same geometry, observation set, and exterior datum as in Example \ref{subsec:ex41}.
The true potential in $\Omega = (-1,1)$ is
\begin{equation}\label{eq:ex43_qtrue}
q(x)=
\begin{cases}
1, & x\in(-\tfrac12,\tfrac12),\\[2pt]
0, & x\in\Omega\setminus(-\tfrac12,\tfrac12).
\end{cases}
\end{equation}
The datum is perturbed by additive noise levels $\delta\in\{10^{-10},10^{-8},10^{-6},10^{-5}\}$.
As in the previous example, the coefficient is reconstructed on an interior subdomain $\Omega'\subset\subset\Omega$ and extended by $0$ outside $\Omega'$ for visualization.

For each $\delta$, we first reconstruct the interior state $u_{0,h}^{\alpha,\delta}$ using the Tikhonov unique continuation step from Section \ref{sec:recon}.
The corresponding reconstructions of $q$ are shown in Figure \ref{fig:ex43_q}(A); as $\delta$ increases, the state reconstruction deteriorates, and this degradation propagates to the coefficient reconstruction.

We compare two coefficient reconstructions in the discontinuous space $Q_h=P_0(\mathcal T_h|_{\Omega'})$. First, we compute $q_{h,\mathrm{L2}}^{\alpha,\delta}\in Q_h$ as the minimizer of the quadratic functional \eqref{eq:q-min},
which is the coefficient recovery described in Section \ref{sec:recon}. Further, to better resolve the discontinuities in \eqref{eq:ex43_qtrue}, we replace the
quadratic coefficient step with a TV-regularized reconstruction on $Q_h=P_0(\mathcal T_h|_{\Omega'})$:
\begin{equation}\label{eq:ex42_qtv}
q_{h,\mathrm{TV}}^{\alpha,\delta}\in
\arg\min_{q_h\in Q_h}
\Bigl(
\|w_h^{\alpha,\delta}+q_hu_h^{\alpha,\delta}\|_{L^2(\Omega')}^2
+\alpha_q\|q_h\|_{L^2(\Omega')}^2
+\alpha_{\mathrm{tv}}\|Dq_h\|_{\ell^1}
\Bigr),
\end{equation}
where $(Dq_h)_i=q_{i+1}-q_i$ denotes the jump of the elementwise constant coefficient across
neighboring cells. The term $\|Dq_h\|_{\ell^1}$ is the discrete TV seminorm on $P_0$.
Unlike the quadratic penalty in \eqref{eq:q-min}, it penalizes jump magnitudes only linearly and
therefore promotes a sparse jump set, i.e.\ a reconstruction with few sharp interfaces rather than
many small oscillations; this is the basic edge-preserving mechanism of TV regularization
\cite{rudin1992tv,strong2003tv}, and its use for discontinuous inverse problems is classical
\cite{acar1994bv}. In the implementation, we use $\alpha=\delta^{3/2}$ and
\[
\alpha_q=\max\!\bigl(10^6\delta^{3/2},\,10^{-14}\bigr).
\]
The TV parameter is chosen adaptively for each noise level.
More precisely, we first compute the baseline $P_0$ reconstruction
$q_{h,L^2}^{\alpha,\delta}$ from \eqref{eq:q-min}, define
\[
\sigma_q:=\operatorname{median}\!\bigl(|Dq_{h,L^2}^{\alpha,\delta}|\bigr)+10^{-14},
\]
and test ten logarithmically spaced candidates
$\alpha_{\mathrm{tv}}\in\{\sigma_q 10^{\tau_j}\}_{j=1}^{10}$ with $\tau_j\in[-2,1]$.
Among these candidates, we select the one that favors two dominant jumps, consistent with the
exact support in \eqref{eq:ex43_qtrue}.
The minimization \eqref{eq:ex42_qtv} is solved by ADMM \cite{boyd2011admm} with tolerance $10^{-6}$, and at most $3000$ iterations.

Finally, we apply a short debiasing step: after thresholding the TV reconstruction at $0.5$ to
detect the support, we refit a single constant level on the detected support by least squares and
clamp the final amplitude to $[0,1]$.
Figure~\ref{fig:ex43_q}(B) shows that the baseline reconstruction smooths the jump discontinuities,
whereas the TV reconstruction yields sharper interfaces and more accurate support recovery.
A mild loss of plateau height for larger noise levels is still expected due to the propagated
state error in $u_{0,h}^{\alpha,\delta}$, the stabilization by $\alpha_q$, and the small amplitude
bias inherent in TV regularization; the debiasing step substantially reduces this effect.

We emphasize that the TV-based reconstruction in this example is included as a problem-adapted numerical heuristic for discontinuous targets and is not covered by the convergence analysis of Section \ref{sec:recon}.

\begin{figure}[t]
\centering
\begin{subfigure}{0.49\textwidth}
  \centering
  \includegraphics[width=\linewidth]{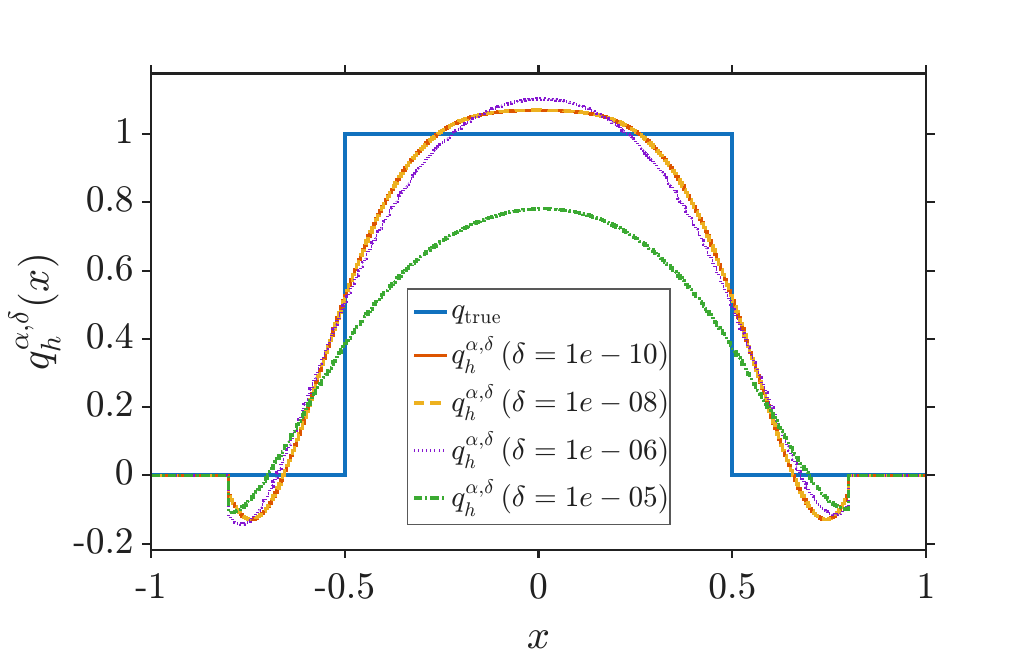}
  \caption{Baseline quadratic reconstruction \eqref{eq:q-min}.}
  \label{fig:ex43_q_l2}
\end{subfigure}\hfill
\begin{subfigure}{0.49\textwidth}
  \centering
  \includegraphics[width=\linewidth]{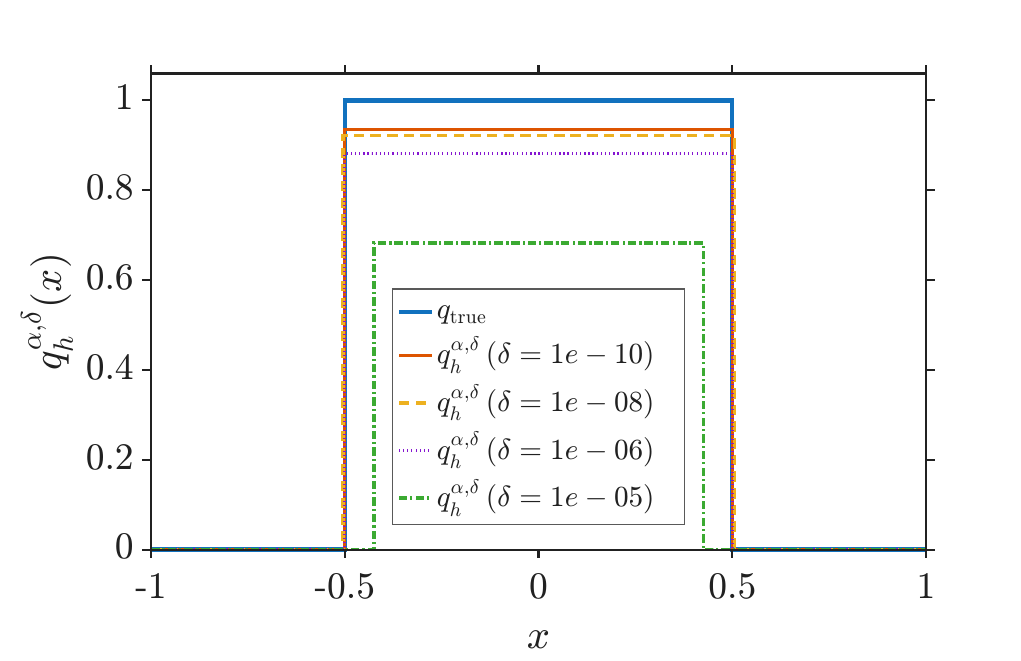}
  \caption{TV reconstruction \eqref{eq:ex42_qtv}.}
  \label{fig:ex43_q_tv}
\end{subfigure}
\caption{Example \ref{subsec:ex42}: discontinuous potential reconstructions on $\Omega'$.}
\label{fig:ex43_q}
\end{figure}

\subsection{Example 4.3 (2D, compactly supported bump potential).}
\label{sec:ex43-2d}

We consider the two-dimensional geometry used in \cite[Ex. 4.4]{li2025numerical}.
Let
\[
\Omega = (-1,1)^2,\qquad
\Omega_R = (-3,3)^2,\qquad
\Omega' = (-0.75,0.75)^2 \subset\subset \Omega,
\]
and define an exterior measurement region separated from~$\Omega$ by a gap~$\varepsilon>0$,
\[
W := \Omega_R \setminus (-1-\varepsilon,\,1+\varepsilon)^2,
\qquad \varepsilon = 0.05.
\]
The true coefficient is the compactly supported bump
\[
q(x,y) = 100\,(r^2-x^2)_+^{\,3}\,(r^2-y^2)_+^{\,3},
\qquad r=0.75,
\]
and we choose the exterior datum \(f\) to be a smooth cutoff approximating \(\mathbf 1_W\).
We fix the fractional order $s=0.5$ and reconstruct on $\Omega'$, extending the reconstruction
by zero outside~$\Omega'$ for visualization on~$\Omega$.
We work on a uniform Cartesian grid on~$\Omega_R$ with spacing $h=0.05$.
For the stability test, we use $\delta \in [10^{-10},10^{-1}]$.

Figure~\ref{fig:ex43-2d-q} reports the true potential, a representative reconstruction
(here $\delta=10^{-8}$), the absolute error, and an illustrative trend consistent with log-type stability.
Motivated by stability estimates of the form
$\|q_1-q_2\|\lesssim |\log(\delta)|^{-\gamma}$ (cf.\ \cite{Ruland2021stability}),
we plot $\|q_h^{\alpha,\delta}-q\|_{L^\infty(\Omega')}$ against
$\delta$, where $\gamma$ is fitted by a
least-squares fit of $\log(\|q_h^{\alpha,\delta}-q\|)$ versus $\log|\log\delta|$ over the tested noise levels.
The fitted exponent $\gamma$ and $C$ in this example are used here only as a descriptive slope on the tested noise range.
 Figure~\ref{fig:ex43-2d-u} shows the true and recovered interior state for the same representative noise level.

Compared with Li \cite[Example~4.4]{li2025numerical}, our two-dimensional reconstruction in Example \ref{sec:ex43-2d} uses the same fractional order $s=0.5$, a similar Tikhonov scaling in a comparable geometric setting, and $\alpha=10^{-13}$. From the numerical plots, the reconstruction in Figure \ref{fig:ex43-2d-q}(D) appears to retain the central peak of the bump more clearly, while \cite{li2025numerical} reports a visible loss near the peak of the recovered potential. We stress that this is only an informal visual comparison, not a controlled benchmark.

\begin{figure}[t]
\centering
\begin{subfigure}[t]{0.48\textwidth}
  \centering
  \includegraphics[width=\linewidth]{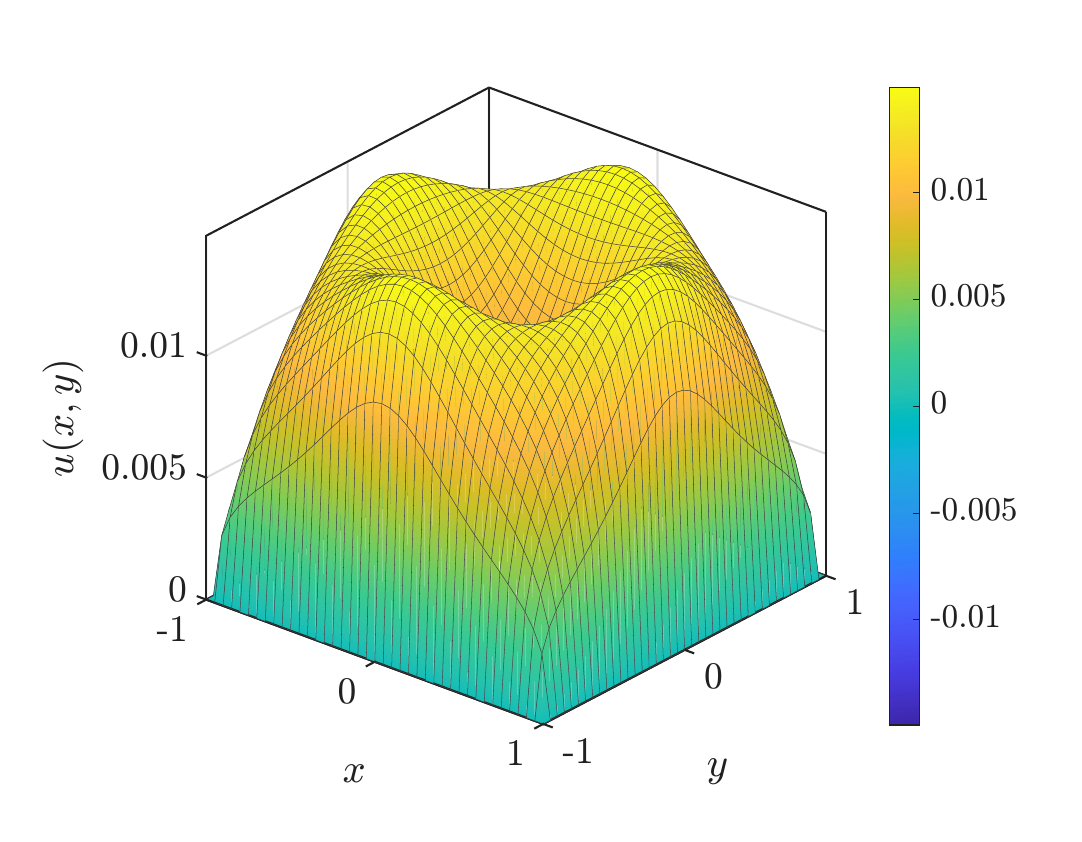}
  \caption{True interior state $u$ on $\Omega$.}
  \label{fig:ex43-2d-utrue}
\end{subfigure}\hfill
\begin{subfigure}[t]{0.48\textwidth}
  \centering
  \includegraphics[width=\linewidth]{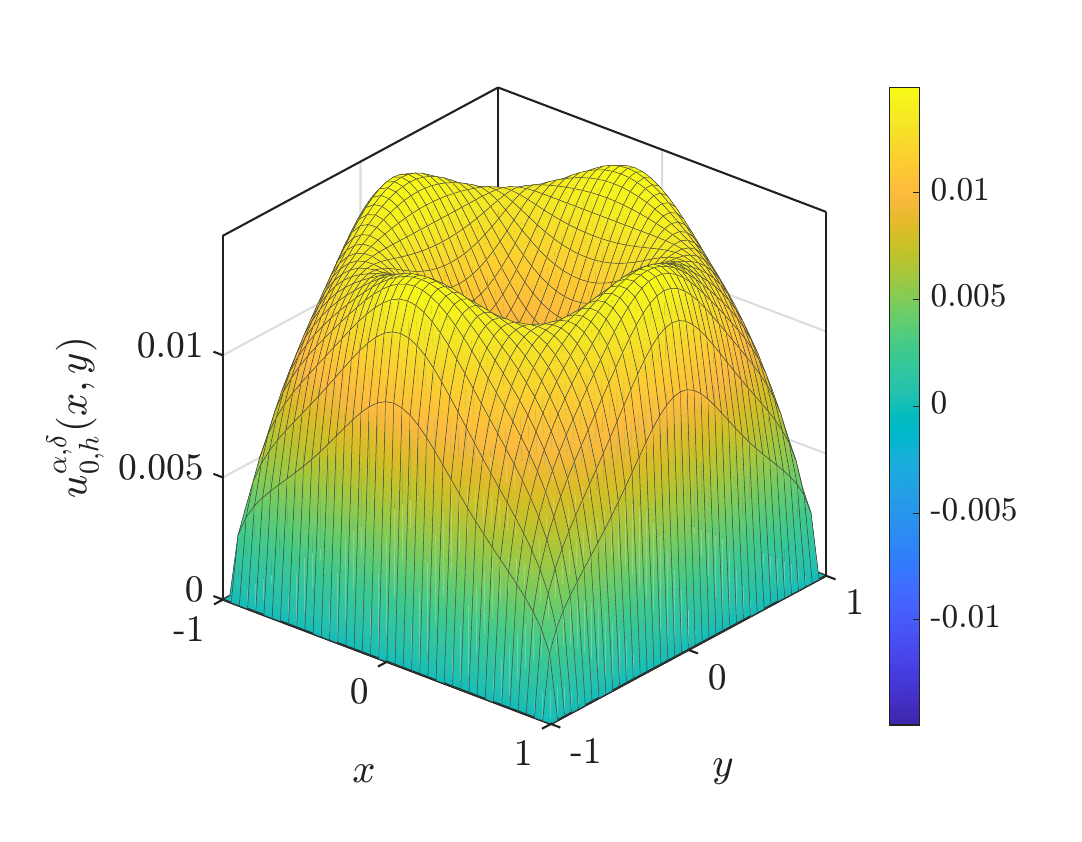}
  \caption{Recovered state $u_h^{\alpha,\delta}$ for $\delta=10^{-8}$.}
  \label{fig:ex43-2d-urec}
\end{subfigure}
\caption{Example \ref{sec:ex43-2d} (2D): state recovery for the representative noise level $\delta=10^{-8}$
with the same parameters as Fig.~\ref{fig:ex43-2d-q}(B).}
\label{fig:ex43-2d-u}
\end{figure}

\begin{figure}[t]
\centering
\begin{subfigure}[t]{0.48\textwidth}
  \centering
  \includegraphics[width=\linewidth]{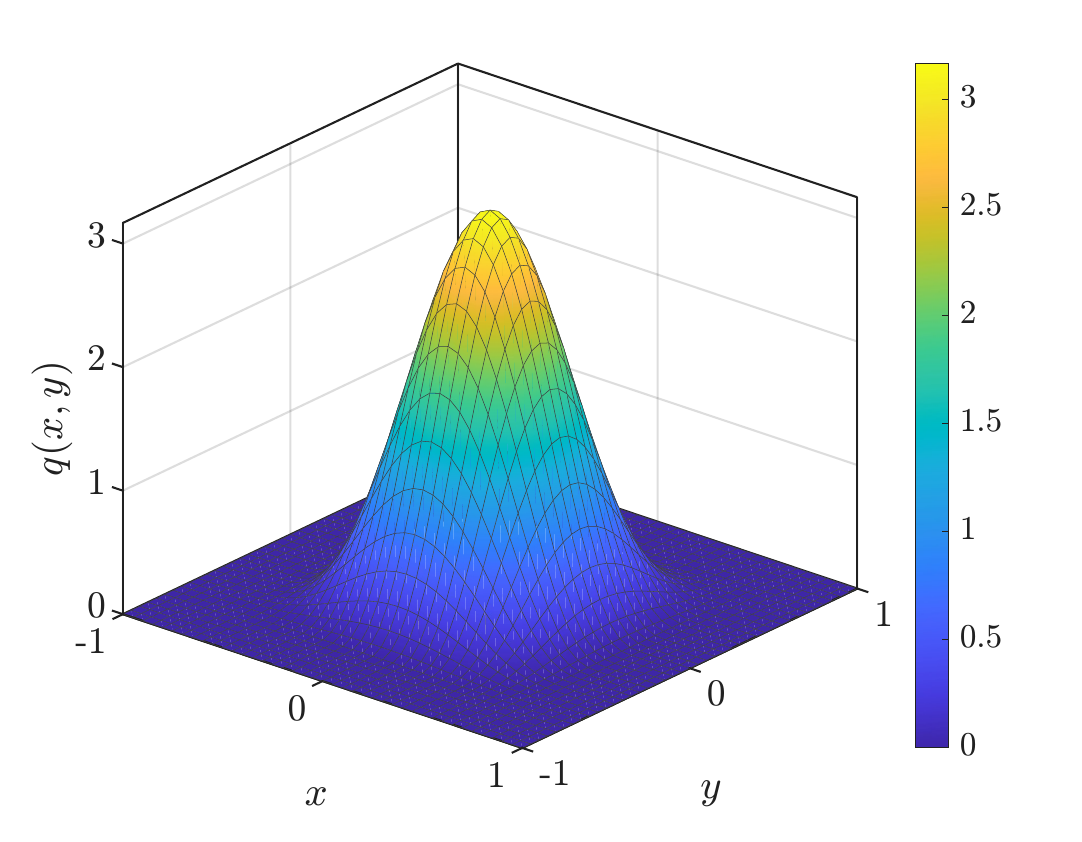}
  \caption{True coefficient $q$ on $\Omega$.}
  \label{fig:ex43-2d-trueq}
\end{subfigure}\hfill
\begin{subfigure}[t]{0.48\textwidth}
  \centering
  \includegraphics[width=\linewidth]{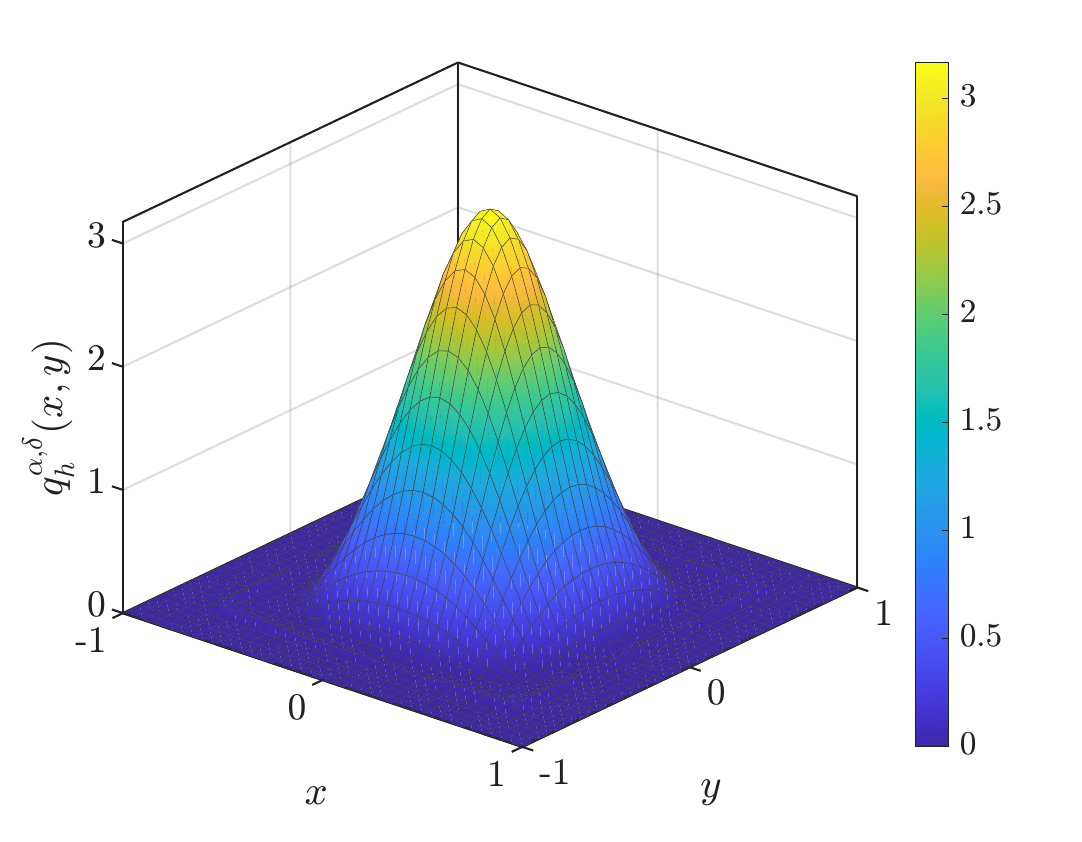}
  \caption{Reconstruction $q_h^{\alpha,\delta}$ for $\delta=10^{-8}$.}
  \label{fig:ex43-2d-qrec}
\end{subfigure}

\vspace{0.4em}

\begin{subfigure}[t]{0.48\textwidth}
  \centering
  \includegraphics[width=\linewidth]{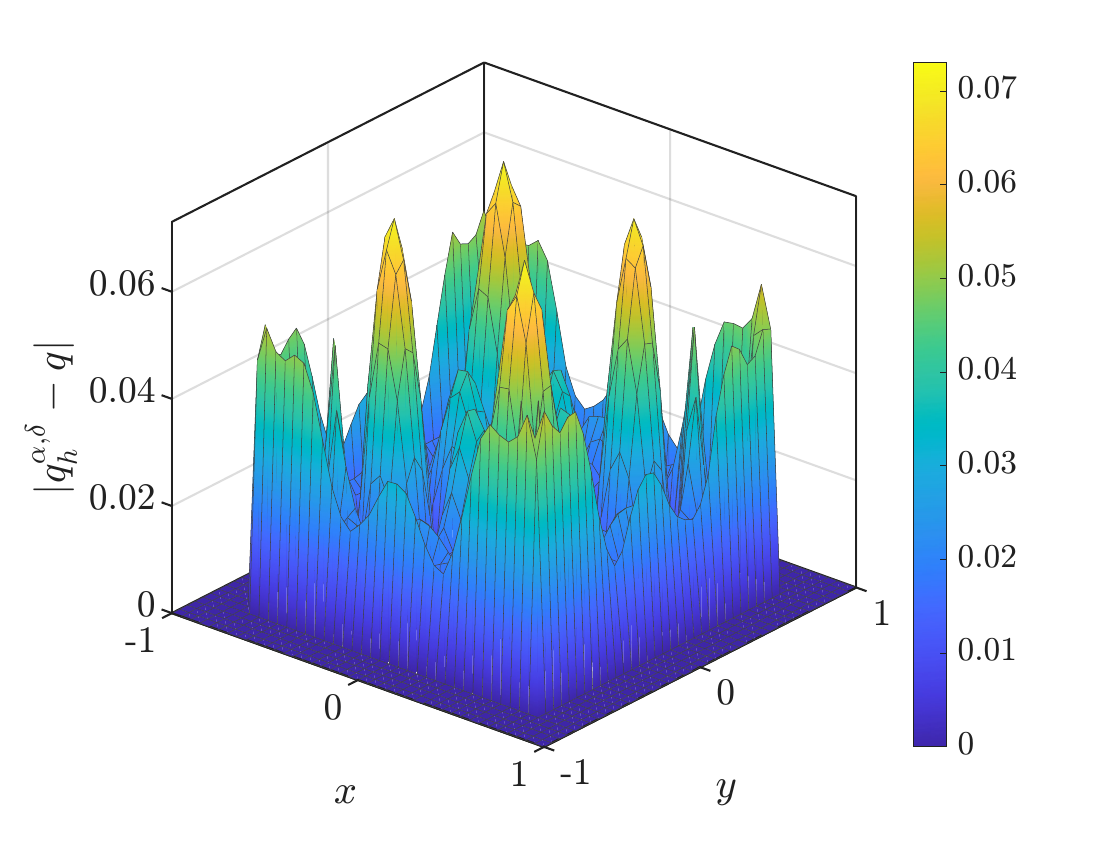}
  \caption{Pointwise error $|q_h^{\alpha,\delta}-q|$.}
  \label{fig:ex43-2d-qerr}
\end{subfigure}\hfill
\begin{subfigure}[t]{0.48\textwidth}
  \centering
  \includegraphics[width=\linewidth]{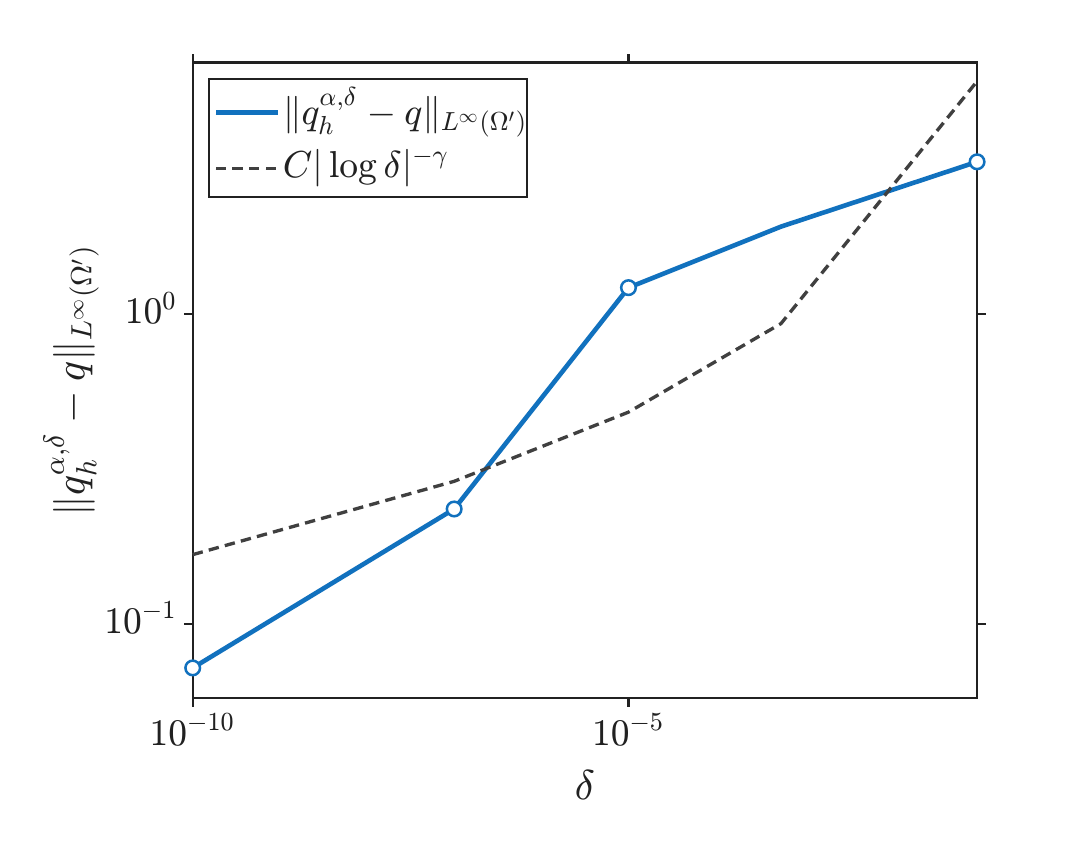}
  \caption{Empirical error trend on the tested noise range:
\(\|q_h^{\alpha,\delta}-q\|_{L^\infty(\Omega')}\) versus
\(\delta\) with $\gamma \approx 1.52$ and $C \approx 20$.}
  \label{fig:ex43-2d-stab}
\end{subfigure}
\caption{Example \ref{sec:ex43-2d} (2D bump potential). Setup:
\(\Omega = (-1,1)^2\), \(\Omega_R = (-3,3)^2\),
\(\Omega' = (-0.75,0.75)^2\), \(W=\Omega_R\setminus(-1-\varepsilon,1+\varepsilon)^2\)
with \(\varepsilon=0.05\), fractional order \(s=0.5\), and grid size \(h=0.05\). Panel (A) shows true potential $q$. 
Panels (B)--(C) use \(\delta=10^{-8}\), \(\alpha=0.1\,\delta^{3/2}\), and
\(\alpha_q=0.01\,\delta\). Panel (D) shows stability trend on the tested noise range \(\delta\in[10^{-10},10^{-1}]\).}
\label{fig:ex43-2d-q}
\end{figure}

\section{Conclusion}\label{sec:conclusion}
We presented a Galerkin finite element framework for single-measurement reconstruction in the fractional Calder\'on problem.
The method follows the unique continuation based single-measurement reconstruction principle: we first recover the interior correction $u_0\in \widetilde H^s(\Omega)$ from a partial exterior observation $\mathcal L u_0|_{W}$ by Tikhonov regularization, and then reconstruct the potential $q$ in a dedicated coefficient space through a stabilized least-squares quotient.
At the discrete level, we formulated the observation operator via nonsingular quadrature on the separated measurement set,
proved existence and uniqueness of the discrete state reconstructor and coefficient reconstructor, and established conditional convergence under natural consistency and parameter coupling assumptions.
We also connected the reconstruction error to known logarithmic-type stability bounds for the underlying single-measurement inverse problem.

The present work should be viewed as a first finite element realization and proof-of-concept
numerical study of the single-measurement reconstruction principle in the fractional Calderón problem. The numerical experiments in one and two space dimensions demonstrate the stable recovery of both smooth and discontinuous potentials. In the discontinuous case, a TV-type jump penalty in the coefficient step yields sharper interface recovery. The validation, automated parameter-choice rules for the TV-based
discontinuous reconstruction, together with sharper total-error estimates for the fully discrete coefficient reconstruction, remain important directions for future work.

\subsection*{Acknowledgments} 

The authors thank Pu-Zhao Kow and Janne Nurminen for many helpful discussions related to this work. Dwivedi and Rupp have been supported by the Deutsche Forschungsgemeinschaft (DFG, German Research Foundation) -- 577175348. Railo was supported by the Research Council of Finland through the Flagship of Advanced Mathematics for Sensing, Imaging and Modelling (grant number 359183), the Emil Aaltonen Foundation, and the Jenny and Antti Wihuri Foundation.

\bibliographystyle{amsplain}
\bibliography{main}

\end{document}